\newcommand{\dZ}{{\mathrm{d}_Z^*}}
\newtheorem{theorem}{Theorem}
\newtheorem{proposition}[theorem]{Proposition}
\newtheorem{corollary}[theorem]{Corollary}
\newtheorem{lemma}[theorem]{Lemma}
\newtheorem{definition}[theorem]{Definition}
\newtheoremstyle{claim}
  {\topsep}
  {\topsep}
  {}
  {}
  {\itshape}
  {}
  {.5em}
  {\thmname{#1}\thmnumber{ #2}\thmnote{ (#3)}}
\theoremstyle{claim}
\theoremstyle{remark} 
\newtheorem{remark}[theorem]{Remark}
\newtheorem{example}[theorem]{Example}
\numberwithin{equation}{section}
\newcommand{\R}{\mathbb{R}}
\newcommand{\f}{\varphi}
\newcommand{\di}{\deg_{\mathrm{iso}}}
\renewcommand{\a}{\alpha}
\renewcommand{\t}{\theta}
\newcommand{\e}{\varepsilon}
\newcommand{\coo}[3]{\mathcal{C}^{#1}(#2,#3)}
\newcommand{\de}{\partial}
\newcommand{\nurm}[3]{\|#2\|_{C^{#1}(D,\R^{#3})}}
\newcommand{\be}{\begin{equation}}
\newcommand{\ee}{\end{equation}}
\title{What is the degree of a smooth hypersurface?}
\author{Antonio Lerario}
\address{SISSA, Via Bonomea 265, 34136 Trieste, Italy}
\email{lerario@sissa.it}
\author{Michele Stecconi}
\address{SISSA, Via Bonomea 265, 34136 Trieste, Italy}
\email{stecconi.michele@gmail.com}
\begin{document}
\begin{abstract} 
Let $D$ be a disk in $\R^n$ and $f\in C^{r+2}(D, \R^k)$. We deal with the problem of the algebraic approximation of the set $j^{r}f^{-1}(W)$ consisting of the set of points in the disk $D$ where the $r$-th jet extension of $f$ meets a given semialgebraic set $W\subset J^{r}(D, \R^k).$ We call such sets \emph{type}--$W$ \emph{singularities};  examples of sets arising in this way are the zero set of $f$, or the set of its critical points. 

Under some transversality conditions, we prove that $f$ can be approximated with a polynomial map $p:D\to \R^k$ such that the corresponding singularity is diffeomorphic to the original one, and such that the degree of this polynomial map can be controlled by the $C^{r+2}$ data of $f$. More precisely, denoting by $\Delta_W\subset C^{r+1}(D, \R^k)$ the set of maps whose $r$-th jet extension \emph{is not} transverse to $W$, we show that there exists a polyomial $p$ such that:
\be\label{eq:ab1}j^{r}p^{-1}(W)\sim j^{r}f^{-1}(W)\quad \textrm{and}\quad \deg(p)\leq O\left(\frac{\|f\|_{C^{r+2}(D, \R^k)}}{\mathrm{dist}_{C^{r+1}}(f, \Delta_W)}\right).\ee
(Here ``$\sim$'' means ``ambient isotopic'' and the implied constant depends on the size of the disk). The estimate on the degree of $p$ implies an estimate on the Betti numbers of the singularity, however, using more refined tools introduced in \cite{MSphd}, we prove a similar estimate, but involving only the $C^{r+1}$ data of $f$.

For a given $W\subset J^{r}(D, \R^k)$, we introduce a notion of $C^\ell$ condition number $\kappa^{(\ell)}_W(f, D)$ of a type--$W$ singularity of a map $f\in C^{\ell}(D, \R^k)$, $\ell\geq r+1$, and we  show that our estimates, as \eqref{eq:ab1}, can be equivalently stated using this notion.

These results specialize to the case of zero sets of $f\in C^{2}(D, \R)$, and give a way to approximate a smooth hypersurface defined by the equation $f=0$ with an algebraic one, with controlled degree (from which the title of the paper). In this case we deal both with the approximation of $f=0$ inside the disk $D$ and with its global approximation in $\R^n$. As a corollary we prove an upper bound on the number of diffeomorphism classes of compact hypersurfaces with a bounded condition number. 

Moreover, we also deal with the more basic problem of producing an actual equation $f=0$ for a given compact hypersurface $Z\subset \R^n$, and we control the condition number of this equation with the geometric data of $Z$ (its reach and its diameter). 
We prove that the Betti numbers of the hypersurface can be estimated with the $\kappa^{(1)}$ condition number of the defining equation and we show that the order of this estimate is sharp. 

In particular, combining these results, we show that a compact hypersurface $Z\subset D\subset \R^n$ with positive reach $\rho(Z)>0$  is isotopic to the zero set in $D$ of a polynomial $p$ of degree
\be \deg(p)\leq c(D)\cdot 2 \left(1+\frac{1}{\rho(Z)}+\frac{5n}{\rho(Z)^2}\right),\ee
where $c(D)>0$ is a constant depending on the size of the disk $D$ (and in particular on the diameter of $Z$).

\end{abstract}
\maketitle
\tableofcontents
\section{Introduction}
\subsection{Polynomial approximation of singularities of smooth maps}In this paper we deal with the following problem: given a smooth (i.e. sufficiently regular) function $f:D\to \R$ defined on a disk $D\subseteq \R^n$ and whose zero set $Z(f)$ is a smooth compact manifold, what is the smallest degree of a polynomial $p$ whose zero set $Z(p)$ is diffeomorphic to $Z(f)$? 

More generally, we will consider the problem of the polynomial approximation of nondegenerate singularities of smooth maps: given a closed and stratified subset $W$ of the jet space\footnote{Here $J^{r}(D, \R^k)$ denotes the $r$-th jet bundle of maps $f:D\to \R^k$ and, given $f:D\to\R^k$ of class $C^r$, $j^rf:D\to \R^k$ denotes its $r$-th jet extension (see \ref{sec:jets} for more details). } $J^{r}(D, \R^k)$ and a smooth map $f:D\to \R^k$ transverse to all the strata of $W$, what is the smallest degree of a polynomial map $p:D\to \R^k$ such that the two pairs $(D, j^rf^{-1}(W))$ and $(D, j^rp^{-1}(W))$ are diffeomorphic?

Besides the case of hypersurfaces, which corresponds to the choice of $W=D\times\{0\}\subset J^0(D, \R)=D\times \R$, other examples of special interests covered by this framework are: systems of smooth inequalities, corresponding to the case of $W=D\times C\subset J^0(D, \R^k)=D\times \R^k$, where $C$ is a closed polyhedral cone; critical points of a function $f:D\to \R$, corresponding to the choice $W=D\times \R\times \{0\}\subset J^1(D, \R)=D\times \R\times \R^n;$ critical points of a smooth map $f:D\to \R^n$, with the choice $W=D\times \R^k\times \{\det=0\}\subset J^1(D, \R^n)=D\times \R^n\times \R^{n\times n}.$ In general we will call the set $j^rf^{-1}(W)$ a \emph{type--$W$ singularity}.

In order to answer the above questions, we shall adopt first a geometric approach. We make the assumption, verified in all cases of practical interest, that $W\subseteq J^{r}(D, \R^k)$ is a semialgebraic set. Note that this does not mean that the singularity is semialgebraic, but rather that it is given by semialgebraic conditions on the derivatives of a \emph{smooth} map. Given such $W$, we denote by $\Delta_W\subset C^{r+1}(D, \R^k)$ the set:
\be \Delta_{W}=\left\{\textrm{$f\in C^{r+1}(D, \R^k)$ such that $j^rf:D\to J^{r}(D, \R^k)$ \emph{is not} transverse to $W$}\right\}.\ee 
Here transversality means with respect to all the strata of a given fixed Whitney stratification of $W$, both for $j^rf$ and $(j^rf)|_{\partial D}$. The set $\Delta_W$ acts as a discriminant for our problem,  and the jet of a map $f\in C^{r+1}(D, \R^k)$ is transverse to $W$ if and only if $\mathrm{dist}_{C^{r+1}}(f, \Delta_W)>0.$ When both $j^rf$ and $(j^rf)|_{\partial D}$ are transverse to $W$, we will simply write $j^rf\pitchfork W$. In this case the set $j^rf^{-1}(W)\subseteq D$ is a Whitney stratified subcomplex of the disk and we will refer to it as a \emph{nondegenerate singularity}; for instance, if $W$ is a smooth submanifold, then so is $j^rf^{-1}(W).$ 

Given subcomplexes $K_0$ and $K_1$ of the disk, we will say that the two pairs $(D, K_0)$ and $(D, K_1)$ are \emph{isotopic}, and write $(D, K_0)\sim (D, K_1)$, if there exists a continuous family of diffeomorphisms $\varphi_t:D\to D,$ with $t\in [0,1]$, such that $\varphi_0=\mathrm{id}_D$ and $\varphi_1(K_0)=K_1.$ With this notation, our first result is the following.
\begin{theorem}\label{thm:mainapp} Let $W\subseteq J^{r}(D, \R^k)$ closed and semialgebraic. For every $f\in C^{r+2}(D, \R^k)$ with $j^{r}f\pitchfork W$ there exists a polynomial map $p=(p_1, \ldots, p_k)$ with each $p_i\in \R[x_1, \ldots, x_n]$ with 
\be\label{eq:boundapp} \deg(p_i)\leq c_1(r,D)\max \left\{r+1, \frac{\|f\|_{C^{r+2}(D, \R^k)}}{\mathrm{dist}_{C^{r+1}}(f, \Delta_W)}\right\}\ee
and such that:
\be\label{eq:isotopy} (D, j^{r}f^{-1}(W))\sim (D, j^{r}p^{-1}(W)).\ee
(Here $c_1(r,D)$ is a constant depending only on the size of the disk $D$ and on $r$).
\end{theorem}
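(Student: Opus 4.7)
The plan is to reduce the theorem to two independent ingredients: a quantitative polynomial approximation in the $C^{r+1}$ norm, and a stratified ambient isotopy argument. The key observation is that $C^{r+1}(D,\R^k)\setminus \Delta_W$ is open by definition, and the open ball $B$ in $C^{r+1}(D,\R^k)$ centred at $f$ of radius $\mathrm{dist}_{C^{r+1}}(f,\Delta_W)$ is entirely contained in this complement. Thus, if I can exhibit a polynomial $p\in B$ whose components satisfy the degree bound \eqref{eq:boundapp}, then the straight-line homotopy $f_t=(1-t)f+tp$ stays inside $B$ for every $t\in[0,1]$, so each $j^r f_t$ (and each $(j^r f_t)|_{\partial D}$) is transverse to every stratum of $W$.

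The first step is a Jackson-type approximation theorem on the disk: for every $f\in C^{r+2}(D,\R^k)$ and every integer $d\geq r+1$, I want to produce a polynomial map $p:\R^n\to\R^k$ with $\deg(p_i)\leq d$ and
\[
\|f-p\|_{C^{r+1}(D,\R^k)}\;\leq\;\frac{\tilde c(r,D)}{d}\,\|f\|_{C^{r+2}(D,\R^k)}.
\]
Such an estimate follows from first applying the Whitney extension theorem (which preserves the $C^{r+2}$ norm up to a constant depending only on $D$) and then convolving the extension with a polynomial Jackson kernel of degree $d$; equivalently one may use a truncated Chebyshev or Bernstein expansion. The $1/d$ saving is exactly the payoff from the one extra derivative available. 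Choosing $d:=\lceil c_1(r,D)\,\|f\|_{C^{r+2}}/\mathrm{dist}_{C^{r+1}}(f,\Delta_W)\rceil$ with $c_1(r,D)$ slightly larger than $\tilde c(r,D)$ — together with the lower bound $d\geq r+1$, which appears in the $\max$ and is intrinsic to the approximation theorem — guarantees that the resulting $p$ lies in $B$.

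The second step is to promote the transverse path $\{j^r f_t\}_{t\in[0,1]}$ to an ambient isotopy of pairs. Because the fixed Whitney stratification of $W$ is preserved under transversality, the total space $X=\{(x,t)\in D\times[0,1]\,:\, j^r f_t(x)\in W\}$ inherits a Whitney stratification compatible with the projection $\pi:X\to [0,1]$, and transversality along $\partial D$ ensures that $\pi$ is a proper stratified submersion up to the boundary. Thom's first isotopy lemma then yields a continuous family of diffeomorphisms $\varphi_t:D\to D$ with $\varphi_0=\mathrm{id}_D$ and $\varphi_t(j^r f^{-1}(W))=j^r f_t^{-1}(W)$; specialising at $t=1$ produces the isotopy \eqref{eq:isotopy}.

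The main obstacle is Step 1: producing a Jackson inequality on $D\subseteq \R^n$ with the sharp $1/d$ scaling in the $C^{r+1}$ norm and with a constant depending only on $r$ and the size of $D$. Tracking the constants through the Whitney extension and the kernel convolution, and reconciling the degenerate regime in which $\|f\|_{C^{r+2}}/\mathrm{dist}_{C^{r+1}}(f,\Delta_W)<r+1$ with the $\max\{r+1,\cdot\}$ factor in the statement, are the real technical points; once Step 1 is in place, Step 2 is a standard application of stratified isotopy theory.
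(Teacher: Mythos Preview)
Your proposal is correct and follows essentially the same route as the paper: the paper quotes a quantitative Weierstrass--Jackson theorem on the disk (Theorem~\ref{thm:bblapprox}, from \cite{BBL}) to produce a polynomial $p_d(f)$ with $\|f-p_d(f)\|_{C^{r+1}}\leq a_{r+2}(D)\,d^{-1}\|f\|_{C^{r+2}}$, chooses $d$ exactly as you suggest so that $p_d(f)$ lands in the open $C^{r+1}$ ball of radius $\mathrm{dist}_{C^{r+1}}(f,\Delta_W)$ about $f$, and then invokes the straight-line homotopy plus Thom's isotopy lemma (packaged as Lemma~\ref{lemma:qt}) to obtain the isotopy of pairs. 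The only difference is that the paper imports the approximation inequality as a black box rather than sketching its proof via Whitney extension and kernel convolution.
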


The transversality assumption in the previous statement is necessary to prevent pathological situations. For instance, every closed set in $D$ is the zero set of a smooth function,  thus there exists a smooth $f$ such that $f^{-1}(0)$ equals the Cantor set; however in this case the pair $(D, f^{-1}(0))$ cannot be diffeomorphic to a pair $(D, p^{-1}(0))$ with $p$ a polynomial. 

From the previous result, using standard techniques from real algebraic geometry, one can immediately produce an upper bound on the topological complexity of a nondegenerate singularity, measured by the sum of its Betti numbers.
\begin{corollary}\label{coro:betti}Given $W\subseteq J^{r}(D, \R^k)$ closed and semialgebraic there exists $c_2(W)>0$ such that for every $f\in C^{r+2}(D, \R^k)$ with $j^{r}f\pitchfork W$ we have:
\be\label{eq:bettiboundcoro} b(j^{r}f^{-1}(W))\leq c_2(W)\cdot \max\left\{r+1, \frac{\|f\|_{C^{r+2}(D, \R^k)}}{\mathrm{dist}_{C^{r+1}}(f, \Delta_W)}\right\}^n.\ee
\end{corollary}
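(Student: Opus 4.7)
The plan is to combine Theorem \ref{thm:mainapp} with the classical Oleinik-Petrovsky-Thom-Milnor estimates on the Betti numbers of semialgebraic sets of controlled complexity. First I would apply Theorem \ref{thm:mainapp} to obtain a polynomial map $p = (p_1,\ldots,p_k)$, each component of degree at most $c_1(r,D)\cdot M$, where $M$ denotes the maximum appearing on the right-hand side of \eqref{eq:bettiboundcoro}, and such that $(D, j^{r}f^{-1}(W))\sim (D, j^{r}p^{-1}(W))$. Since an ambient isotopy of pairs restricts to a homeomorphism of the subspaces, it induces in particular an isomorphism on all Betti numbers, and the problem reduces to bounding $b(j^{r}p^{-1}(W))$ for a fully polynomial set.

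Next I would unfold the semialgebraic description of $W$. Fix once and for all a representation of $W\subset J^{r}(D,\R^k)$ as a finite Boolean combination of sign conditions on polynomials $P_1,\ldots, P_N$ on the affine jet space; the number $N$ and the maximum degree $\max_\alpha\deg P_\alpha$ depend only on $W$. The jet extension $j^{r}p:D\to J^{r}(D,\R^k)$ is polynomial, with each coordinate of degree at most $\deg(p)$, so every composition $P_\alpha\circ j^{r}p$ is a polynomial on $\R^n$ of degree at most $\deg(P_\alpha)\cdot \deg(p)\leq c(W)\cdot c_1(r,D)\cdot M$. Including a polynomial inequality describing $D$, this exhibits $j^{r}p^{-1}(W)\subset \R^n$ as a semialgebraic set defined by a $W$-bounded number of polynomial sign conditions, all of degree $O(M)$.

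Finally I would invoke the Oleinik-Petrovsky-Thom-Milnor bound: the sum of the Betti numbers of such a semialgebraic set in $\R^n$ is at most $c(N,n)\cdot d^n$, where $d$ is the maximal degree of the defining polynomials. Since $N$ depends only on $W$ and $n$ is fixed, this yields the bound $c_2(W)\cdot M^n$ claimed in \eqref{eq:bettiboundcoro}. I do not foresee any deep obstacle; the work is essentially bookkeeping, making sure that the constants depending on $W$, on $r$, and on $D$ are cleanly separated from the $f$-dependent quantity $M$, and that the Whitney stratification implicit in the transversality hypothesis on $W$ can be realized through a semialgebraic description whose combinatorial complexity is absorbed into $c_2(W)$.
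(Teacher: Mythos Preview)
Your proposal is correct and follows essentially the same route as the paper: apply Theorem~\ref{thm:mainapp} to replace $f$ by a polynomial map $p$ of controlled degree, then pull back a fixed semialgebraic description of $W$ through the polynomial jet $j^rp$ and bound the Betti numbers of the resulting semialgebraic set via a Thom--Milnor type estimate. The only cosmetic difference is that the paper cites Basu's refinement for Boolean formulas without negations rather than the classical Oleinik--Petrovsky--Thom--Milnor bound, but the structure of the argument is identical.
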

\subsection{$C^{r+1}$ bound for Betti numbers}
Note that in \eqref{eq:bettiboundcoro}, the $C^{r+2}$ norm of the function $f$ appears, even if the transversality condition would require only looking at the $(r+1)$-th jet of $f$. This comes from the fact that the error estimate for the polynomial approximation of $f$ in the $C^{r+1}$ topology involves its $C^{r+2}$ norm (this condition can be slightly relaxed when working with Lipschitz derivatives). However, if one is only interested in bounding the topology of $j^rf^{-1}(W)$, it turns out that an estimate on the $C^{r+1}$ norm of $f$ suffices, at least in the case $W$ is smooth, as we will prove in Theorem \ref{thm:introsingsemiwitdash} below. In the case of hypersurfaces, we will discuss this in more details in Section \ref{sec:semicont}.
\begin{theorem}\label{thm:introsingsemiwitdash}
Let $W\subseteq J^{r}(D, \R^k)$ be a smooth, compact and semialgebraic submanifold with the property that $W\pitchfork J^r_z(D,\R^k)$ for all $z\in D$. 
There exists a constant $c_3(W)>0$ such that for every $f\in C^{r+1}(D, \R^k)$ with $j^{r}f\pitchfork W$ and $j^rf^{-1}(W)\cap \de D =\emptyset$ we have:
\be\label{eq:introsemibettising}
b(j^{r}f^{-1}(W))\leq c_3(W)\cdot\max\left\{r,  \frac{\nurm {r+1}fk}{\hat{\delta}_W(f,D)}\right\}^n.
\ee
\end{theorem}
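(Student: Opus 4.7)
The strategy is to translate the statement into one about the zero set of a $C^1$-valued map $F : D \to \R^c$ and then apply the machinery of Theorem \ref{thm:mainapp} and Corollary \ref{coro:betti} to $F$ rather than to $f$. The hypothesis $W \pitchfork J^r_z(D,\R^k)$ for every $z \in D$ is what enables this reduction: writing $c := \mathrm{codim}(W)$, I can choose a tubular neighborhood $U \supset W$ in $J^r(D,\R^k)$ and a smooth map $g : U \to \R^c$ with $W = g^{-1}(0)$ such that for every $z \in D$ the restriction $g|_{U \cap J^r_z(D,\R^k)}$ is a submersion. Setting $F(z) := g(j^rf(z))$, one verifies that $F^{-1}(0) = j^rf^{-1}(W)$, that $F$ is $C^1$ since $j^rf$ is, that $\|F\|_{C^1(D,\R^c)}$ is controlled by a $W$-dependent constant times $1 + \|f\|_{C^{r+1}(D,\R^k)}$ (the $(r{+}1)$-th derivatives of $f$ entering via the chain rule), and that $j^rf \pitchfork W$ is equivalent to $F$ being a submersion at each of its zeros with a quantitative submersion constant comparable to $\hat\delta_W(f,D)$.

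The problem then reduces to the following $C^1$-version of Corollary \ref{coro:betti}: if $F : D \to \R^c$ is $C^1$ and is a quantitative submersion at $F^{-1}(0)$ with constant $\sigma > 0$, then $b(F^{-1}(0)) \leq c(D) \max\{1, \|F\|_{C^1}/\sigma\}^n$. I would prove this by mimicking the polynomial-approximation / Milnor--Thom pipeline of Theorem \ref{thm:mainapp}, adapted to $C^1$ data: approximate $F$ in $C^1$ by a polynomial map $P$ closely enough that $F^{-1}(0)$ and $P^{-1}(0)$ are isotopic via linear interpolation, then apply the Milnor-type bound $b(P^{-1}(0)) = O(\deg P)^n$. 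Jackson's theorem cannot be invoked directly because $F$ is merely $C^1$, but one can mollify $F$ at scale $\varepsilon > 0$ (losing a factor $\varepsilon^{-1}$ in $\|F\|_{C^2}$), apply Jackson to the mollified function, and optimize $\varepsilon$ against the polynomial degree $d$ to recover the natural rate $\|F\|_{C^1}/d$ for $C^1$-polynomial approximation by polynomials of degree $d$.

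The main obstacle will be controlling the quantitative transversality through this two-step approximation. Mollification perturbs $dF$ by an amount tied to the modulus of continuity of $dF$, which we do not control beyond the uniform $C^1$-bound, so a naive straight-line isotopy between $F$ and $P$ is in danger of leaving the region where the submersion constant is preserved. This is precisely where the refined tools from \cite{MSphd} alluded to in the abstract should come in: they should provide a polynomial approximation scheme whose $C^1$-error matches the Jackson rate while preserving enough of the transversality of $F$ to run the isotopy argument. Once this is achieved, the resulting $P$ can be taken of degree bounded by $c(W,D)\max\{r,\,\|f\|_{C^{r+1}(D,\R^k)}/\hat\delta_W(f, D)\}$, and the classical Betti-number estimate for real algebraic sets defined by polynomials of that degree yields \eqref{eq:introsemibettising}.
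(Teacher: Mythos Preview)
Your reduction to a $C^1$ zero--set problem for $F=g\circ j^rf$ is natural, but the proof then hinges on a step that does not go through with only $C^{r+1}$ data on $f$ (equivalently $C^1$ data on $F$). You claim that mollifying at scale $\varepsilon$ and then applying Jackson recovers the rate $\|F\|_{C^1}/d$ for $C^1$ polynomial approximation. This is not correct: mollification gives $\|F_\varepsilon\|_{C^2}\le C\|F\|_{C^1}/\varepsilon$ and $\|F-F_\varepsilon\|_{C^0}\le C\varepsilon\|F\|_{C^1}$, but $\|F-F_\varepsilon\|_{C^1}$ is controlled only by the modulus of continuity of $dF$, not by $\|F\|_{C^1}$. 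You notice this yourself in the last paragraph, but then you simply postulate that the ``refined tools from \cite{MSphd}'' furnish a polynomial approximation scheme that preserves transversality at the Jackson rate. No such scheme exists in general, and this is not what those tools are.

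The paper's proof abandons the isotopy requirement altogether. The ``refined tool'' is Theorem~\ref{thm:semiconti}: if $j^rf\pitchfork W$ and $E$ is a tubular neighborhood of $Z=j^rf^{-1}(W)$, then for any $\psi$ that is merely $C^0$--close enough to $j^rf$ so that $\psi^{-1}(W)\subset E$, one has $b(Z)\le b(\psi^{-1}(W))$. The point is that this needs only $C^0$ control on $j^rf-j^rg$, i.e.\ only $C^r$ control on $f-g$. Jackson's theorem then gives a polynomial $g$ with $\|f-g\|_{C^r}<\varepsilon$ and degree $d\le a_{r+1}(D)\|f\|_{C^{r+1}}/\varepsilon$ (or $d\le r$), which is exactly the right dependence. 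One checks, using the definition of $\hat\delta_W(f,D)$ and a tangent--space inclusion lemma for $\partial B_\varepsilon(W)$, that $E=j^rf^{-1}(B_\varepsilon(W))$ is a tubular neighborhood of $Z$ inside $\mathrm{int}(D)$ for $\varepsilon<\hat\delta_W(f,D)$, and that $j^rg^{-1}(W)\subset E$. The Milnor--Thom/Basu bound applied to the semialgebraic set $j^rg^{-1}(W)$ finishes the argument. In short: you should replace ``isotopy via $C^1$ approximation'' by ``Betti--number semicontinuity via $C^0$ approximation of the jet''; the latter is the genuine content of the theorem.
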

The quantity $\hat{\delta}_W(f,D)$ is defined in Section \ref{sec:singbetti}. In analogy with the $C^{r+1}$ distance form the discrimant appearing in the estimate \eqref{eq:bettiboundcoro}, it  depends only on $W$ and on the $(r+1)$-{th} jet of the map $f$. In particular we have $\hat{\delta}_W(f,D)>0$ if and only if $j^{r}f\pitchfork W$ and $j^rf^{-1}(W)\cap \de D =\emptyset$. Notice the different type of boundary condition: the transversality assumption ``$(j^rf)|_{\de D}\pitchfork W$'' in Theorem \ref{coro:betti} is replaced here with the stronger requirement that there is no point $z\in \de D$ such that $j^rf(z)\in W$.

What is interesting about Theorem $\ref{thm:introsingsemiwitdash}$ is that the right hand side of \eqref{eq:introsemibettising} does not depend on the size (nor even on the existence) of the derivatives of $f$ of order higher than $r+1$. This is obtained by means of Theorem \ref{thm:semiconti}, a recent result from \cite{mttrp} that says that given a function $f$ such that $j^rf\pitchfork W$, the Betti numbers of a singularity $(j^rg)^{-1}(W)$ are bigger than those of $(j^rf)^{-1}(W)$, provided that $g$ is close enough to $f$ in the $C^r$ topology. 
In this way we can control directly the Betti numbers of $j^rg^{-1}(W)$ and relax the requirements on the needed polynomial approximation, since we can bypass the isotopy condition \eqref{eq:isotopy} of Theorem \ref{thm:mainapp}.
\subsection{The condition number of a singularity}It is often more convenient to substitute the distance from the discriminant with a more explicit quantity $\delta_W(f, D)$, defined as follows. Denote by $\Sigma_{W, z}\subset J_z^{r+1}(D, \R^k)$ the set of all possible jets of maps which are not transverse to $W$ at $z$. This is a closed and semialgebraic subset of $J^{r+1}_z(D, \R^k)$ and we define the number:
\be \delta_W(f, D)=\min_{z\in D}\mathrm{dist}(j^{r+1}f(z), \Sigma_{W, z}).\ee
Observe that when $z\in \partial D$ the set $\Sigma_{W, z}$ consists of ``two pieces'': in fact if $z\in \mathrm{int}(D)$  we only have to consider the transversality of $j^rf$, while if $z\in \partial D$ one needs also to take into account the transversality of $(j^rf)|_{\partial D}$, which involves a restriction of the jet, as a multilinear map, to $T_z(\partial D)\simeq \{z\}^\perp.$ 

The transversality of $j^rf$ in most cases has a simple geometric interpretation, and for $z\in \mathrm{int}(D)$ the set $\Sigma_{W, z}$ can be easily described. For example: in the case of hypersurfaces, $\Sigma_{W, z}=\{z\}\times \{0\}\times \{0\}\subset D\times \R\times \R^n$ and $\mathrm{dist}(j^{1}f(z), \Sigma_{W, z})=\left(|f(z)|^2+\|\nabla f(z)\|^2\right)^{1/2};$ in the case of critical points of a smooth function $f:D\to \R$, we have that $\Sigma_{W, z}=\{z\}\times \R\times \{0\}\times \{\det=0\}\subset D\times \R\times \R^n\times \mathrm{Sym}(n, \R)$ and 
$\mathrm{dist}(j^{2}f(z), \Sigma_{W, z})=\left(\|\nabla f(z)\|^2+\left(\sigma_{1}(\mathrm{He}(f)(z))\right)^2\right)^{1/2},$ where $\mathrm{He}(f)(z)$ denotes the Hessian of $f$ at $z$ and $\sigma_{1}$ denotes the smallest singular value.

The quantity $\delta_W(f, D)$ vanishes exactly when $f\in\Delta_W$, and $j^rf\pitchfork W$ if and only if $\delta_W(f, D)>0$ (Lemma \ref{lemma:transv} below). The key property, that allows to translate estimates in terms of this quantity into the above geometric framework, is the fact that the distance to the discrminant and the quantity $\delta_W(f, D)$ have the same order of magnitude. In fact, in Proposition \ref{propo:delta}, we show that there exists a constant $C=C(n,k,r)>0$ such that: 
\be\label{eq:constant} \delta_W(f, D)\leq \mathrm{dist}_{C^{r+1}}(f, \Sigma_W)\leq C\cdot \delta_W(f, D).\ee
The first inequality follows from a characterization of the set of functions not transverse to $W$ at $z$ as those with jet belonging to $\Sigma_{W, z}$; the second inequality uses a classical result of Whitney on the norm of a map with prescribed jet at one point.
\begin{remark}The constant $C>0$ in \eqref{eq:constant} depends also on the choice of the pointwise norm on the jet bundles. In general we have $C>1$, as shown in Example \ref{ex:CgeOne}.
\end{remark}

Using this notation, we introduce the notion of condition number for a map with respect to a closed, stratified $W\subseteq J^{r}(D, \R^k)$ (Definition \ref{def:condition}). Given $f\in C^{\ell}(D, \R^k)$ with $\ell\geq r+1$, we set:
\be\label{eq:condition} \kappa_W^{(\ell)}(f, D)=\frac{\|f\|_{C^{\ell}(D, \R^k)}}{\delta_W(f, D)}\geq \frac{\|f\|_{C^{\ell}(D, \R^k)}}{\mathrm{dist}_{C^{r+1}}(f, \Delta_W)}.\ee
Notice that the numerator in \eqref{eq:condition} depends on the topology we are considering on the space of functions, while the denominator does not. Transversality is synonimous of bounded (i.e. non-infinite) condition number. Because of the inequality on the right hand side of \eqref{eq:condition}, the estimates \eqref{eq:boundapp} and \eqref{eq:bettiboundcoro} can also be stated using $\kappa_W^{(r+2)}(f, D).$
\subsection{The condition number of a defining equation for  a hypersurface}Given a smooth and compact hypersurface $Z$ contained in $\mathrm{int}(D)$, the existence of a polynomial $p$ such that $(D, Z)\sim (D,Z(p))$ is guaranteed by a classical result of Seifert \cite{Seifert}. This problem is also known as the ``algebraic approximation problem'', and has several generalizations, culminating with the celebrated Nash-Tognoli Theorem \cite{Nash, Tognoli}: every smooth and compact manifold $M$ is diffeomorphic to an algebraic set in $\R^n$. (Our problem concerns with the special case $M$ is already a hypersurface in $\R^n$). 

The proof of Seifert's Theorem (clearly explained in \cite{Kollar}) consists in first realizing $Z$ as the regular zero set of a function $f:\R^n\to \R$ and then approximating $f$ on the disk $D$ in the $C^1$ topology with a polynomial. In the very first step, one uses the fact that every smooth and compact hypersurface in $\R^n$ is the zero set of a smooth function. This result is well-known, see for instance \cite[Theorem 7.2.3]{DFN}, and a natural question in our framework is to produce an estimate on the condition number of a defining equation for $Z$, in terms of some metric data of the embedding $Z\hookrightarrow \R^n$. 

To this end, given $Z\subset \R^n$ of class $C^1$ we define the \emph{reach} of $Z$ as:
\be \rho(Z)=\sup_{r>0}\left\{\mathrm{dist}(x, Z)<r\implies \exists! z\in Z \,|\,\mathrm{dist}(x, Z)=\mathrm{dist}(x, z)\right\}.
\ee
The reach of a $C^1$ manifold doesn't need to be positive, as shown in \cite[Example 4]{KP}, where for every $0<\epsilon<2$ an example of a $C^{2-\epsilon}$ compact curve with zero reach in $\R^2$ is constructed\footnote{Near the origin this is in fact just the set $Z\subset \R^2$ defined by $Z=\{y=x^{2-\epsilon}\}$}; however $\rho(Z)>0$ if $Z$ is of class $C^2.$ We prove the following result\footnote{In the special case of hypersurfaces, i.e. when $W=D\times \{0\}$, in the notation of the various involved quantities we omit the dependence from $W$ in the subscripts.}.
\begin{theorem}\label{thm:condeq}Given a compact hypersurface $Z\subset \R^n$ of class $C^1$ with $\rho(Z)>0$, there exists a $C^1$ function $f:\R^n\to \R$ whose zero set is $Z$ and such that for every disk $D$ containing $Z$ such that $\mathrm{dist}(Z, \partial D)>\rho(Z)$, we have:
\be\label{eq:k1} \kappa^{(1)}(f, D)\leq 2\left(1+\frac{1}{\rho(Z)}\right).\ee
If moreover $Z$ is of class $C^2$, then the function $f$ can be chosen of class $C^2$ and satisfying:
\be\label{eq:k2} \kappa^{(2)}(f, D)\leq 2\left(1+\frac{1}{\rho(Z)}+\frac{5n}{\rho(Z)^2}\right).\ee
\end{theorem}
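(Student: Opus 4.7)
The plan is to define $f$ as a composition $\phi\circ\dZ$ of the signed distance function of $Z$ with a well-chosen one-variable cutoff $\phi$, turning $\dZ$ (initially defined only on the tube of radius $\rho(Z)$) into a global $C^1$ (or $C^2$) function on $\R^n$ with $f^{-1}(0)=Z$.

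Since $Z\subset\R^n$ is a compact $C^1$ hypersurface with $\rho=\rho(Z)>0$, by Jordan--Brouwer separation one has a consistent choice of sides on $\R^n\setminus Z$, and the signed distance $\dZ$ is of class $C^1$ on the tube $U_\rho=\{x:\mathrm{dist}(x,Z)<\rho\}$, with $\|\nabla\dZ\|\equiv 1$. When $Z$ is $C^2$, $\dZ$ is $C^2$ on $U_\rho$, and at $x=z+t\nu(z)$ the Hessian $\nabla^2\dZ(x)$ has eigenvalues $0$ and $-\kappa_i(z)/(1-t\kappa_i(z))$, where the principal curvatures of $Z$ satisfy $|\kappa_i|\le 1/\rho$; in particular $\|\nabla^2\dZ\|\le 2/\rho$ on the subtube $\{|\dZ|\le\rho/2\}$. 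I would then pick $\phi:\R\to\R$ of class $C^1$ (resp.\ $C^2$) satisfying (i) $\phi(t)=t$ for $|t|\le\rho/2$, (ii) $\phi$ strictly monotone on $[-\rho,\rho]$ with $\phi(\pm\rho)=\pm c$ for some $c\in(\rho/2,\rho]$, (iii) $\phi'(\pm\rho)=0$ (and $\phi''(\pm\rho)=0$ in the $C^2$ case), and (iv) $\|\phi'\|_\infty\le 1$, $\|\phi''\|_\infty=O(1/\rho)$. Set
\[
f(x)=\begin{cases}\phi(\dZ(x))&\text{if }x\in U_\rho,\\ \pm c&\text{if }x\in\R^n\setminus U_\rho,\end{cases}
\]
the sign being determined by the component of $\R^n\setminus Z$; the matching of derivatives at $\partial U_\rho$ makes $f$ globally $C^1$ (resp.\ $C^2$), and $f^{-1}(0)=Z$ follows from strict monotonicity of $\phi$.

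For \eqref{eq:k1}, using $\|\nabla\dZ\|\equiv 1$ yields $|f(x)|^2+\|\nabla f(x)\|^2=\phi(\dZ(x))^2+\phi'(\dZ(x))^2$ on $U_\rho$, and this quantity equals $c^2$ outside the tube. Since $\phi'=1$ on $[-\rho/2,\rho/2]$ and $|\phi(t)|\ge\rho/2$ for $|t|\ge\rho/2$, one gets $\phi^2+\phi'^2\ge\min(1,\rho^2/4)$, hence $\delta(f,D)\ge\min(1,\rho/2)$; combined with $\|f\|_{C^1(D)}\le c+\|\phi'\|_\infty\le\rho+1$ this yields $\kappa^{(1)}(f,D)\le(\rho+1)/\min(1,\rho/2)\le 2(1+1/\rho)$ when $\rho\le 2$, while for $\rho>2$ one runs the same construction with the linear region of $\phi$ and the saturation value rescaled to $O(1)$, giving $\kappa^{(1)}\le 2\le 2(1+1/\rho)$. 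For \eqref{eq:k2} one further computes
\[
\nabla^2 f=\phi''(\dZ)\,\nabla\dZ\otimes\nabla\dZ+\phi'(\dZ)\,\nabla^2\dZ,
\]
controlling the first term by $\|\phi''\|_\infty=O(1/\rho)$ (supported where $|\dZ|\ge\rho/2$) and the second by $\|\phi'\|_\infty\cdot 2/\rho$ on the subtube $\{|\dZ|\le\rho/2\}$ (where $\phi''=0$); converting the resulting operator-norm bounds to coordinate (Frobenius) norms introduces the factor $n$ and produces the $5n/\rho^2$ term.

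The main obstacle will be engineering the cutoff $\phi$ so that the numerical constants come out exactly as $2(1+1/\rho)$ and $2(1+1/\rho+5n/\rho^2)$: this is a one-variable constrained optimization combining a uniform lower bound on $\phi^2+\phi'^2$ with upper bounds on $\|\phi\|_\infty$, $\|\phi'\|_\infty$, $\|\phi''\|_\infty$, and it splits naturally by the size of $\rho$. Once $\phi$ is fixed, the remaining work reduces to the standard differential geometry of the signed distance function in the tube of radius $\rho(Z)$.
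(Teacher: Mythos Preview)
Your approach is essentially the paper's: define $f=\phi\circ\dZ$ with a one-variable cutoff, bound $\delta(f,D)$ from below via $\phi^2+(\phi')^2$, bound $\|f\|_{C^1}$ and $\|f\|_{C^2}$ from above, and use the eigenvalue formula for $\nabla^2\dZ$ in terms of principal curvatures. The paper's cutoff is $g_\rho(t)=\rho\,g(t/\rho)$ with $g(t)=t$ on $[0,1/2]$, increasing and concave on $[1/2,7/8]$, and constant $=3/4$ for $t\ge 7/8$.

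One technical point worth flagging: you let $\phi$ saturate at $|t|=\rho$, the boundary of the tube, whereas the paper saturates at $|t|=7\rho/8$, strictly inside. This matters for the $C^2$ estimate. Your region-splitting for $\nabla^2 f=\phi''\,\nabla\dZ\otimes\nabla\dZ+\phi'\,\nabla^2\dZ$ handles the first term on $\{|\dZ|\ge\rho/2\}$ and the second on $\{|\dZ|\le\rho/2\}$, but in the transition region $\rho/2<|\dZ|<\rho$ the second term is still present, and there the eigenvalues $-\kappa_i/(1-t\kappa_i)$ of $\nabla^2\dZ$ blow up as $t\to\rho$ (when $\kappa_i=1/\rho$). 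One can still control $\phi'(t)\cdot(1-t\kappa_i)^{-1}$ using $|\phi'(t)|\le\|\phi''\|_\infty(\rho-t)$, but this is extra work and muddies the constants. The paper's choice of saturating at $7\rho/8$ gives the clean uniform bound $|1-t\kappa_i|^{-1}\le 8$ and avoids the issue entirely; you may find it simpler to adopt that.
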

\subsection{What is the degree of a smooth hypersurface?} Once a compact $C^2$ hypersurface $Z\subset \R^n$ is given, as we have seen with Theorem \ref{thm:condeq}, we can produce a defining equation with $C^2$ condition number controlled with a function of the reach $\rho(Z)>0$ (recall that the $C^2$ regularity assumption implies that the reach is nonzero). Therefore, assuming $Z=Z(f)$ with $f\in C^2$, we come back to our original question: what is the smallest degree of a polynomial $p$ whose zero set $Z(p)$ is diffeomorphic to $Z(f)$? Here we should make a choice: whether we want to approximate $Z$ only inside the disk $D$, or we want to get a global approximation in $\R^n$. In fact Nash-Tognoli Theorem also consists of two separate results: for a compact $M$, Nash first proved that $M$ is diffeomorphic to a component of a real algebraic set, and then Tognoli proved that we can choose this algebraic set to be connected. 

 We deal with the problem of the global approximation in Appendix \ref{sec:global}, where we prove an explicit (but rather unpleasant) bound on the degree of the global approximating polynomial in terms of the $C^2$ data of the defining function. Concerning the approximation of $Z(f)$ inside the disk $D$, specializing Theorem \ref{thm:mainapp}, we see that there exists a polynomial $p$ with:
\be\label{eq:apphyp} \deg(p)\leq c_1(0,D)\cdot\max\left\{1, \kappa^{(2)}(f, D)\right\}\ee
such that $(D, Z(f))\sim (D, Z(p)).$  Combining this with Theorem \ref{thm:condeq} we get the following result, which uses only the available geometry of $Z$.
\begin{corollary}
Given a compact hypersurface $Z\subset \R^n$ of class $C^2$ and a disk $D$ containing it such that $\mathrm{dist}(Z, \partial D)>\rho(Z)$, there exists a polynomial $p$ with
\be \deg(p)\leq c_1(0,D)\cdot 2 \left(1+\frac{1}{\rho(Z)}+\frac{5n}{\rho(Z)^2}\right)
\ee
such that 
\be (D, Z)\sim (D, Z(p)).\ee
\end{corollary}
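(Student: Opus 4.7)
The plan is to directly splice together the two main results of the paper, since the corollary is essentially their composition specialized to the hypersurface case $W=D\times\{0\}\subset J^0(D,\R)$. First I would invoke Theorem \ref{thm:condeq} on the hypersurface $Z$: this produces a $C^2$ function $f:\R^n\to\R$ with $Z(f)=Z$ and with condition number bounded by
\[
\kappa^{(2)}(f,D)\ \leq\ 2\Bigl(1+\frac{1}{\rho(Z)}+\frac{5n}{\rho(Z)^2}\Bigr).
\]
Here the hypothesis $\mathrm{dist}(Z,\partial D)>\rho(Z)$ of the corollary is exactly the hypothesis required by Theorem \ref{thm:condeq}, so the application is immediate.

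Next I would verify the transversality hypotheses needed to apply Theorem \ref{thm:mainapp} with $r=0$ and $W=D\times\{0\}$. Since $\kappa^{(2)}(f,D)$ is finite, by the discussion around \eqref{eq:constant} we have $\delta_W(f,D)>0$, which (by Lemma \ref{lemma:transv}, stated earlier in the paper) is equivalent to $j^0 f\pitchfork W$ on the interior together with the boundary condition. Geometrically, transversality at an interior point $z$ says $(f(z),\nabla f(z))\neq 0$, which follows from $Z$ being a $C^2$ hypersurface of positive reach. The boundary condition is free of issue because $\mathrm{dist}(Z,\partial D)>\rho(Z)>0$ forces $Z\cap \partial D=\emptyset$, so $f|_{\partial D}$ is automatically transverse to $W$ there.

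Then I would apply Theorem \ref{thm:mainapp}, as already specialized in \eqref{eq:apphyp}, which yields a polynomial $p$ with
\[
\deg(p)\ \leq\ c_1(0,D)\cdot\max\{1,\kappa^{(2)}(f,D)\}
\]
and satisfying $(D,Z(f))\sim (D,Z(p))$. Since $Z(f)=Z$, the isotopy reads $(D,Z)\sim(D,Z(p))$. Finally, I would observe that the bound from Theorem \ref{thm:condeq} is always at least $2$, so the $\max$ is just $\kappa^{(2)}(f,D)$, and substituting the explicit upper bound on $\kappa^{(2)}(f,D)$ yields the claimed degree estimate.

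There is no real obstacle here: the work has been done in Theorems \ref{thm:condeq} and \ref{thm:mainapp}, and the only point requiring a line of care is the verification that the transversality / boundary hypotheses of Theorem \ref{thm:mainapp} are genuinely implied by the positive reach and the separation $\mathrm{dist}(Z,\partial D)>\rho(Z)$ assumed in the corollary.
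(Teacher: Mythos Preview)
Your proposal is correct and follows exactly the paper's approach: the corollary is stated immediately after \eqref{eq:apphyp} as the direct combination of Theorem \ref{thm:condeq} with the specialization \eqref{eq:apphyp} of Theorem \ref{thm:mainapp}, and your argument carries this out, with the added (and welcome) care of checking that finiteness of $\kappa^{(2)}(f,D)$ and the boundary separation ensure the transversality hypotheses are met.
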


Another interesting consequence of \eqref{eq:apphyp} is Theorem \ref{thm:dash} below,  which provides a bound on the number $\#(\kappa,D)$ of isotopy classes (and in particular on the number of diffeomorphism classes)  of compact hypersurfaces $Z\subset \mathrm{int}(D)$ defined by a regular equation $Z=Z(f)$ with bounded $C^2$ condition number.
\begin{theorem}\label{thm:dash}
There exist two constants $C_1,C_2$ (depending on $D$) such that the number $\#(\kappa,D)$ of rigid isotopy classes of pairs $(D, Z(f))$ with $Z(f)\subset \mathrm{int}(D)\subset \R^n$ and with $\kappa^{(2)}(f,D)\leq \kappa$ is bounded by:
\be 
\#(k,D)
\leq \min\left\{C_1,\kappa^{\left(C_2 \kappa^{n+1}\right)}\right\}
\ee
\end{theorem}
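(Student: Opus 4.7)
The strategy is to reduce the counting of isotopy classes of smooth hypersurfaces with bounded $C^2$ condition number to the well-studied problem of counting rigid isotopy classes of real algebraic hypersurfaces of bounded degree, then to apply the Oleinik--Petrovsky--Milnor--Thom estimates.

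First I would invoke the hypersurface specialization \eqref{eq:apphyp} of Theorem \ref{thm:mainapp}: any $f\in C^2(D,\R)$ with $\kappa^{(2)}(f,D)\leq \kappa$ and $Z(f)\subset \mathrm{int}(D)$ is isotopic to $Z(p)$ for some polynomial $p$ with
\be
\deg(p)\leq d:=c_1(0,D)\cdot \max\{1,\kappa\}.
\ee
Hence $\#(\kappa,D)$ is bounded above by the number of rigid isotopy classes of pairs $(D,Z(p))$ as $p$ ranges over polynomials of degree $\leq d$ whose zero set sits in $\mathrm{int}(D)$.

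Next I would work in the finite-dimensional affine space $V_d\cong \R^{N}$ of real polynomials of degree $\leq d$, where
\be
N=\binom{n+d}{n}=O(d^n)=O(\kappa^n).
\ee
Inside $V_d$ I would define the \emph{isotopy discriminant} $\Sigma_d\subset V_d$ as the semialgebraic set of polynomials $p$ for which $(D,Z(p))$ fails to be a regular pair: this is the union of the locus where $j^0p$ is not transverse to $W=D\times\{0\}$ (i.e.\ $p$ and $\nabla p$ vanish simultaneously somewhere in $\mathrm{int}(D)$), the locus where $p|_{\partial D}$ is non-transverse to $0$, and the locus where $Z(p)\cap\partial D\neq\emptyset$. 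Each of these conditions is cut out by a bounded quantifier block applied to polynomial (in)equalities in the coefficients of $p$ whose degrees are polynomially bounded in $d$, so by effective quantifier elimination $\Sigma_d$ is described by polynomial (in)equalities of degree $d^{O(1)}$. The crucial geometric input is then Thom's first isotopy lemma applied to the tautological family over $V_d\setminus \Sigma_d$: on each connected component, the pair $(D,Z(p))$ is locally constant up to ambient isotopy, and hence constant up to rigid isotopy on the whole component.

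Finally I would apply the Oleinik--Petrovsky--Milnor--Thom bound: the number of connected components of the complement of a semialgebraic set in $\R^N$ defined by (in)equalities of degree $\leq D'$ is at most $(O(D'))^N$. Plugging in $D'=d^{O(1)}$ and $N=O(d^n)$ gives
\be
\#(\kappa,D)\leq (O(d))^{O(d^{n})}=\kappa^{O(\kappa^n\log\kappa)}\leq \kappa^{C_2\kappa^{n+1}},
\ee
absorbing the $\log\kappa$ factor into the exponent $\kappa^{n+1}$ for $\kappa\geq 2$. The trivial bound $\#(\kappa,D)\leq C_1$ for uniformly bounded $\kappa$ comes from the same argument applied at some fixed threshold $\kappa_0$ together with the observation that the isotopy class is stable under small $C^2$ perturbations, so one has a uniform bound in the small-$\kappa$ regime. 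Taking the minimum of the two estimates yields the stated inequality.

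The main obstacle will be step three: one must check that the isotopy discriminant $\Sigma_d$ is genuinely semialgebraic with the claimed degree bound, and that crossing a stratum of $V_d\setminus\Sigma_d$ is the only way the isotopy type of $(D,Z(p))$ can change. The boundary-transversality and containment conditions require a little extra care, since they are not expressed by the vanishing of a single resultant but by a quantified first-order formula over $\partial D$, whose elimination contributes the bulk of the polynomial-in-$d$ degree growth appearing in the exponent.
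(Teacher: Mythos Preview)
Your plan is correct and follows the same overall architecture as the paper: reduce via Theorem~\ref{thm:mainapp} to polynomials of degree $O(\kappa)$, then bound the number of rigid isotopy classes by counting connected components of the complement of a discriminant in the finite-dimensional coefficient space, using Milnor--Thom type estimates.

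The execution differs in two places worth noting. First, the paper avoids effective quantifier elimination entirely: instead of building a disk-adapted discriminant $\Sigma_d$ and eliminating the $\exists x\in D$ quantifier, it passes to the global picture in $\R^n$ (equivalently, to the projective discriminant), where the bad locus is the zero set of a \emph{single} classical discriminant polynomial $\mathrm{disc}_{d,n}$ of explicitly known degree $(n+1)(d-1)^n$ (quoting \cite[Proposition~7.4]{3264}). It then bounds $b_0$ of the complement by the total Betti number of the one-point compactification of $\Delta_{d,n}$ via Alexander duality plus a Mayer--Vietoris argument, rather than applying Milnor--Thom directly to the complement. Second, to return from the global count $\#\mathcal Z_{d,n}$ to the disk, the paper inserts a separate component-counting step ($\#\mathcal C_{d,n}\le d(2d-1)^{n-1}\,\#\mathcal Z_{d,n}$). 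Your route, working directly on $D$ with a QE-defined $\Sigma_d$, is more self-contained and sidesteps this detour, at the cost of relying on black-box degree bounds from quantifier elimination rather than the explicit discriminant degree. Either way one lands at $d^{O(d^{n})}$ (or $d^{O(d^{n+1})}$ in the paper's bookkeeping), which is comfortably inside $\kappa^{C_2\kappa^{n+1}}$; your intermediate $\kappa^{O(\kappa^n\log\kappa)}$ is a slight overcount (it is actually $\kappa^{O(\kappa^n)}$), but harmless for the stated conclusion.
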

In the case of the approximation of a hypersurface $Z$ inside the disk $D$, we introduce in Section \ref{sec:isodeg} the notion of \emph{isotopy degree} of $Z$ in $D$, denoted by $\di(Z,D)$: this is the smallest degree of a polynomial $p$ such that $(D, Z(f))\sim (D, Z(p))$. The idea here is that, as soon as you can move from the smooth category to the semialgebraic one, a complete new set of tools becomes available (for instance the use of Thom-Milnor for controlling the topology of the zero set) and Theorem \ref{thm:mainapp} allows to make this transition quantitative.  

An interesting observation is that if $f$ is itself a polynomial of degree $d$, then $\di(Z(f), D)$ might be smaller than $d$. In fact, for most polynomials $f$ of degree $d$ one has $\di(Z(f), D)\leq O(\sqrt{d \log d})$, see Remark \ref{remark:measure}.

 \subsection{Semicontinuity of Betti numbers}\label{sec:semicont}
 If we are interested in providing an upper bound for the sum of the Betti numbers of a compact  $C^2$ hypersurface, we can in principle use Corollary \ref{coro:betti} and get:
 \be b(Z(f))\leq c_2\max\left\{1, \kappa^{(2)}(f, D)\right\}^n,\ee
 for some constant $c_2=c_2(W)>0.$
 However, with some extra work, it is possible to provide a bound on the Betti numbers of $Z(f)$ only using $\kappa^{(1)}(f, D)$, i.e. only using $C^1$-information on $f$, as next Theorem shows.
 \begin{theorem}\label{thm:semiwitdash}
Let $f\in C^1(D, \R)$ such that the equation $f=0$ is regular and all of its solutions belong to the  interior of $D$.
\begin{enumerate}
\item\label{itm:semiwitdash1} There exists a constant $c_4>0$ such that the total Betti number of $Z(f)$ is bounded by:
\be\label{eq:bound3} b(Z(f))\leq c_4\cdot(  \kappa^{(1)}(f, D))^{n}.
\ee
\item\label{itm:semiwitdash2} There exists a \emph{bounded} sequence 
$\{f_m\}_{m\in \mathbb{N}}\subset C^1(D, \R)$ with
\be \lim_{m\to \infty}k^{(1)}(f_m, D)=+\infty\ee
and a constant $c_5>0$ such that for every $m\in \mathbb{N}$ the zero set $Z(f_m)\subset \mathrm{int}(D)$ is regular and
\be b(Z(f_m))\geq c_5\cdot(\kappa^{(1)}(f_m, D))^{n}.
\ee
\end{enumerate}
\end{theorem}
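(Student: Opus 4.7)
The proof splits into the upper bound (\ref{itm:semiwitdash1}) and the sharpness part (\ref{itm:semiwitdash2}), which I would treat separately.

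For (\ref{itm:semiwitdash1}) I would simply specialize Theorem~\ref{thm:introsingsemiwitdash} to $r=0$, $k=1$ and $W=D\times\{0\}\subset J^{0}(D,\R)$. The set $W$ is compact, smooth and semialgebraic; the transversality hypothesis $W\pitchfork J^{0}_{z}(D,\R)$ is immediate because $T_{(z,0)}W=T_{z}D\times\{0\}$ and $T_{(z,0)}J^{0}_{z}=\{0\}\times\R$ together span the full tangent space of $J^{0}(D,\R)$ at $(z,0)$. Regularity of the equation $f=0$ ensures $j^{0}f\pitchfork W$ by Lemma~\ref{lemma:transv}, while the assumption that every zero of $f$ lies in $\mathrm{int}(D)$ gives $j^{0}f^{-1}(W)\cap\partial D=\emptyset$. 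Theorem~\ref{thm:introsingsemiwitdash} then yields
\[
b(Z(f))\;\le\;c_{3}(W)\cdot\Bigl(\|f\|_{C^{1}(D,\R)}/\hat\delta_{W}(f,D)\Bigr)^{n},
\]
and one concludes by checking, for this choice of $W$, that $\hat\delta_{W}(f,D)$ is bounded below by a universal multiple of $\delta_{W}(f,D)=\min_{z\in D}\sqrt{f(z)^{2}+\|\nabla f(z)\|^{2}}$, which is precisely the denominator of $\kappa^{(1)}(f,D)$.

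For (\ref{itm:semiwitdash2}) I would exhibit the explicit sequence
\[
f_{m}(x)=\frac{1}{m}\Bigl(\prod_{i=1}^{n}\cos(mx_{i})-c\Bigr),\qquad m\in\mathbb{N},
\]
for a fixed $c\in(0,1)$, on a fixed disk $D\subset\R^{n}$ chosen so that $\partial D$ stays at positive distance from every critical point of $\prod_{i}\cos(mx_{i})$ (slightly shrinking $D$ if needed, with no effect on the asymptotics). A direct computation gives $\|f_{m}\|_{C^{1}}=O(1)$, so the sequence is bounded in $C^{1}$. At the ``saddle'' critical points where $\prod_{i}\cos(mx_{i})=0$ and $\nabla f_{m}$ vanish simultaneously, one has $\sqrt{f_{m}^{2}+\|\nabla f_{m}\|^{2}}=c/m$, hence $\delta(f_{m},D)=\Theta(1/m)$ and $\kappa^{(1)}(f_{m},D)=\Theta(m)\to\infty$. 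On the other hand, around each grid point $z_{k}=(\pi k_{1}/m,\ldots,\pi k_{n}/m)$ with all $k_{i}$ of the same parity, the function $\prod_{i}\cos(mx_{i})$ has a nondegenerate extremum of value $\pm1$, and the local Morse model shows that $\{\prod_{i}\cos(mx_{i})=c\}$ contains a small topological $(n-1)$-sphere in a neighborhood of $z_{k}$. Since $D$ contains $\Theta(m^{n})$ such peaks, one obtains $b(Z(f_{m}))\ge c_{5}\cdot m^{n}\asymp(\kappa^{(1)}(f_{m},D))^{n}$, which is the required lower bound.

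The main obstacle I foresee is in part (\ref{itm:semiwitdash1}): matching the definition of $\hat\delta_{W}$, introduced only in Section~\ref{sec:singbetti}, with the quantity $\delta_{W}$ appearing in $\kappa^{(1)}$. If this comparison is not tautological from the definition of $\hat\delta_{W}$, one falls back on the equivalence \eqref{eq:constant} between $\delta_{W}$ and the $C^{r+1}$-distance to the discriminant, together with the $C^{0}$-semicontinuity of Betti numbers from Theorem~\ref{thm:semiconti} that underlies Theorem~\ref{thm:introsingsemiwitdash}. In part (\ref{itm:semiwitdash2}) a minor technicality is the handling of the peaks that lie within $O(1/m)$ of $\partial D$: discarding them costs only $O(m^{n-1})$ components, so the $m^{n}$ lower bound is preserved at the price of a smaller constant $c_{5}$.
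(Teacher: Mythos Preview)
For part~(\ref{itm:semiwitdash1}) your route and the paper's are essentially the same: both rest on the $C^{0}$--semicontinuity Theorem~\ref{thm:semiconti}, you via its packaged form Theorem~\ref{thm:introsingsemiwitdash}, the paper by reproving the special case directly (taking $E=f^{-1}(-\varepsilon,\varepsilon)$, approximating in $C^{0}$ by a polynomial of degree $\lesssim\|f\|_{C^{1}}/\varepsilon$, and invoking Milnor--Thom). Your deferral of the comparison $\hat\delta_W\ge c\,\delta_W$ is harmless here: for $W=D\times\{0\}$ one has $\varepsilon_0(W)=+\infty$, and since $Z(f)\cap\partial D=\emptyset$ and $\partial D$ is connected, the minimum of $|f|$ on $\partial D$ is attained at a critical point of $f|_{\partial D}$, so the two boundary terms agree and in fact $\hat\delta_W(f,D)=\delta_W(f,D)$. (Your formula for $\delta_W$ omits the boundary term of~\eqref{eq:deltahyp}, but this does not affect the argument.)

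Part~(\ref{itm:semiwitdash2}) has a genuine gap. The zero set of your $f_m=\tfrac1m\bigl(\prod_i\cos(mx_i)-c\bigr)$ is a periodic hypersurface, so $Z(f_m)$ meets $\partial D$ for every disk $D$; the hypothesis $Z(f_m)\subset\mathrm{int}(D)$ required by the theorem is violated, and your remark about discarding $O(m^{n-1})$ boundary peaks addresses only the component count, not this containment. A cutoff near $\partial D$ would fix this but forces you to redo the $C^{1}$ and $\delta$ estimates in the transition region. Separately, you assert $\delta(f_m,D)=\Theta(1/m)$ but only verify the upper bound at the saddle points; the lower bound $\delta(f_m,D)\ge c'/m$ is what actually gives $\kappa^{(1)}(f_m,D)\le C m$, without which the inequality $b(Z(f_m))\ge c_5(\kappa^{(1)})^{n}$ cannot be concluded from $b(Z(f_m))\asymp m^{n}$.

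The paper avoids both issues by a different construction: start from any fixed $f$ with $Z(f)\subset\mathrm{int}(D)$ and $f\equiv 1$ near $\partial D$, place $\asymp m^{n}$ rescaled copies $x\mapsto f(m(x-z_i))$ on disjoint sub-disks of radius $1/m$, and set $f_m\equiv 1$ elsewhere. Then $Z(f_m)$ is automatically the disjoint union of $\asymp m^{n}$ copies of $Z(f)$ inside $\mathrm{int}(D)$, and the rescaling immediately gives $\delta(f_m,D)\ge\delta(f,D)$ and $\kappa^{(1)}(f_m,D)\le m\,\kappa^{(1)}(f,D)$, with no further analysis required.
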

Let us comment this result. First, we observe that it is possible to deduce a result similar to \eqref{eq:bound3} also from the work of Yomdin \cite{Yomdin}, where bounds on the Betti numbers of $Z(f)$ are stated in terms of the distance from zero to the the set of critical values of $f$ (Remark \ref{remark:distance} below compares the two quantities). See also \cite{Yomdin84} for an other result of Yomdin, concerning the Hausdorff measure of $Z(f)$.  

Here \eqref{eq:bound3} is a consequence of a semicontinuity result for the Betti numbers of the zero set of $f$ under small perturbations in the $C^0$ topology (rather than in the $C^1$ topology, which is what one would expect to need). This is discussed in Section \ref{sec:semicont}.

The second part of the statement shows that the bound in \eqref{eq:bound3} is ``sharp'': as we get close to the discriminat $\Delta$, the complexity of $f$ can actually increase as the reciprocal of the distance from $\Delta$.

Combining Theorem \ref{thm:semiwitdash} with Theorem \ref{thm:condeq} we get the following estimate for the Betti numbers of a hypersurface in terms of its reach, if nonzero. (A similar bound can be obtained using the results from \cite{NSW}, see Remark \ref{rem:NSW} below). 

\begin{corollary}\label{coro:NSW}For any disk $D\subset \R^n$ there exists a constant $c_5>0$ such that, for every compact hypersurface $Z\subset D$ of class $C^1$ with $\rho(Z)>0$, we have: \be b(Z)\leq c_5\left(1+\frac{1}{\rho(Z)}\right)^n.\ee
\end{corollary}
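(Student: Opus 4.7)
The plan is to compose Theorem \ref{thm:condeq} with part \ref{itm:semiwitdash1} of Theorem \ref{thm:semiwitdash}: the former produces a $C^1$ defining equation $f$ for $Z$ whose $\kappa^{(1)}$-condition number is controlled by $\rho(Z)^{-1}$, and the latter converts that bound into an estimate on $b(Z(f))$. The only subtlety is that Theorem \ref{thm:condeq} requires a disk $D'$ satisfying $\mathrm{dist}(Z, \partial D') > \rho(Z)$, which need not be $D$ itself; I would handle this by working in a single fixed enlargement $\tilde D \supset D$ depending only on $D$.

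To see that one $\tilde D$ serves every admissible $Z$ simultaneously, I would use the elementary observation that a compact $C^1$ hypersurface $Z \subset D$ of positive reach bounds, by Jordan--Brouwer, a region that contains a ball of radius $\rho(Z)$. Hence $2\rho(Z) \leq \mathrm{diam}(Z) \leq \mathrm{diam}(D)$, so $\rho(Z) \leq R_D$, where $R_D$ is the radius of $D$. Taking $\tilde D$ to be the concentric disk of radius $2R_D + 1$ therefore guarantees $\mathrm{dist}(Z, \partial \tilde D) > \rho(Z)$ uniformly in $Z$, since any $z \in Z \subset D$ satisfies $\mathrm{dist}(z, \partial \tilde D) \geq R_D + 1 > R_D \geq \rho(Z)$.

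Applying Theorem \ref{thm:condeq} in $\tilde D$ now produces $f \in C^1(\R^n, \R)$ with $Z(f) = Z$ and
$$\kappa^{(1)}(f, \tilde D) \leq 2\left(1 + \frac{1}{\rho(Z)}\right).$$
Finiteness of this condition number is equivalent to $f = 0$ being a regular equation on $\tilde D$, and by construction its solution set $Z$ sits in $D \subset \mathrm{int}(\tilde D)$. Part \ref{itm:semiwitdash1} of Theorem \ref{thm:semiwitdash} therefore applies on $\tilde D$ and yields
$$b(Z) = b(Z(f)) \leq c_4 \cdot \kappa^{(1)}(f, \tilde D)^n \leq c_4 \cdot 2^n \left(1 + \frac{1}{\rho(Z)}\right)^n.$$
Setting $c_5 := 2^n c_4$, a constant depending only on $\tilde D$, and hence only on $D$, gives the desired estimate. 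The only conceptual input, and the main would-be obstacle, is the a priori bound $\rho(Z) \leq R_D$ that decouples the enlargement $\tilde D$ from the particular hypersurface $Z$; without it, $c_5$ would itself depend on $\rho(Z)$.
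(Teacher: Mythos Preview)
Your proof is correct and follows essentially the same route as the paper: enlarge $D$ to a fixed concentric disk, apply Theorem~\ref{thm:condeq} there, and feed the resulting $\kappa^{(1)}$ bound into Theorem~\ref{thm:semiwitdash}\ref{itm:semiwitdash1}. The paper simply asserts that $\rho(Z)$ cannot exceed the radius of $D$ and takes the disk of doubled radius, whereas you supply a Jordan--Brouwer justification for this bound and use radius $2R_D+1$, which has the minor advantage of cleanly guaranteeing the strict inequality $\mathrm{dist}(Z,\partial\tilde D)>\rho(Z)$ required by Theorem~\ref{thm:condeq}.
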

\subsection{Acknowledgements} The authors wish to thank Daouda Niang Diatta for many interesting discussions, and Pierre Lairez and Yosef Yomdin for their comments on the first draft of this paper.
\section{Preliminaries}
\subsection{Jet bundles}\label{sec:jets}
We assume that the reader is already familiar with notion of jet space, referring to the textbooks \cite{Hirsch, Arnold} for more details. 
We simply recall that the jet space $J^{r}(D, \R^k)$ can be naturally identified with $D\times  J^r(n,k)$, where 
\be J^{r}(n,k)=\left(\bigoplus_{i=1}^r \R^{n+i\choose i}\right)^k\ee
and $\R^{n+i\choose i}$ is the space of homogeneous polynomials of degree $i$ in $n$ variables (see \cite{Hirsch}). (The identification of a jet with an element of $J^{r}(n,k)$ is made through the list of the partial derivatives). We endow $J^{r}(n, k)$ with the standard euclidean structure, allowing to compute distances between elements in $J^r_z(D, \R^k).$

Using this notation, we can define the $C^r$ norm of $f\in C^r(D, \R^k)$ as
\be\label{eq:norm2} \|f\|_{C^r(D, \R^k)}=\max_{z\in D}\|j^rf(z)\|.\ee
More explicitly, given $f=(f_1, \ldots, f_k)\in C^r(D, \R^k)$  its $C^r$ norm is:
\be\label{eq:norm1}\|f\|_{C^r(D, \R^k)}=\sup_{z\in D}\left(\sum_{i=1}^k\sum_{|\alpha|\leq r}\left|\frac{\partial ^\alpha f_i}{\partial x^{\alpha}}(z)\right|^2\right)^{1/2}.\ee

We observe that the definition of $C^r$ norm is sensitive to the choice of the norm in the fibers of the jet bundle. For instance, if one replaces in \eqref{eq:norm2} the quantity $\|j^rf(z)\|$ with the sup of the value of each partial derivative of $f$ order at most $r$ at $z$, one gets an equivalent $C^r$ norm. The choice of another fiberwise norm results in different constants appearing in the theorems below, which would have similar statements.

\subsection{Stratifications} Recall from  \cite[Section 9.7]{BCR} that given two disjoint and connected Nash submanifolds (i.e. smooth and semialgebraic) $X$ and $Y$ in $\R^n$, such that $Y\subset \mathrm{clos}(X)$, we say that they satisfy the Whintey condition (a) if for every point $y\in Y$ and for every sequence $\{x_n\}_{n\geq 0}\subset X$ such that $\lim x_n=y$ and $\lim_n T_{x_n}X=\tau\in G(\dim(X), n)$, then $\tau$ contains $T_yY.$

Given a semialgebraic set $W\subseteq \R^p$ we will say that this set it is \emph{Whitney stratified} if it is stratified as $W=\coprod W_j$ with each stratum a Nash submanifold such that, whenever $W_{j_1}\subset \mathrm{clos}(W_{j_2})$ for some pair of strata $W_{j_1}$ and $W_{j_2}$, then they satisfy Whintey condition (a). Recall that, by \cite[Theorem 9.7.11]{BCR}, every semialgebraic stratification can be refined to a Whitney stratification.

\begin{definition}Let $W\subseteq J^{r}(D, \R^k)$ be closed, semialgebraic and Whitney stratified:
\be\label{eq:Whitney} W=\coprod_{j=1}^s W_j.\ee
Given $f\in C^{r+1}(D, \R^k)$ we will say that $j^rf$ is transverse to $W$, and write $j^rf\pitchfork W$, if both $j^rf$ and $j^rf|_{\partial D}$ are transverse to all the strata of the stratification \eqref{eq:Whitney}.  
\end{definition}
\begin{proposition}\label{propo:sigma}Given $W\subseteq J^{r}(D, \R^k)$ closed, semialgebraic and Whitney stratified, there exists $\Sigma_W\subset J^{r+1}(D, \R^k)$, closed and semialgebraic, such that for every $f\in C^{r+1}(D, \R^k)$ we have that $j^{r}f$ \emph{is not} transverse to $W$ at $z$ if and only if $j^{r+1}f(z)\in \Sigma_{W}.$
\end{proposition}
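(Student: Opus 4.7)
The plan is to encode the transversality condition at a point $z \in D$ as a condition on $j^{r+1}f(z)$. Observe that the differential $d(j^rf)_z : T_zD \to T_{j^rf(z)} J^r(D,\R^k)$ has coordinate matrix built out of partial derivatives of $f$ of order at most $r+1$; hence, denoting by $\pi : J^{r+1}(D,\R^k) \to J^r(D,\R^k)$ the canonical projection, there is a well-defined linear map $L(\sigma) : T_zD \to T_{(z,\pi(\sigma))}J^r(D,\R^k)$ depending linearly on $\sigma \in J^{r+1}_z(D,\R^k)$ and satisfying $L(j^{r+1}f(z)) = d(j^rf)_z$. Writing the Whitney stratification as $W = \coprod_{j=1}^s W_j$, I would define
\begin{equation*}
\Sigma_W \ =\ \bigcup_{j=1}^s \bigl\{(z,\sigma) \in J^{r+1}(D,\R^k) \,:\, \pi(\sigma) \in W_j \text{ and } (\ast)_j \text{ holds at } (z,\sigma)\bigr\},
\end{equation*}
where $(\ast)_j$ is the disjunction of
\begin{equation*}
L(\sigma)(T_zD) + T_{(z,\pi(\sigma))}W_j \ \neq\ T_{(z,\pi(\sigma))}J^r(D,\R^k)
\end{equation*}
and, in case $z \in \de D$, the analogous statement with $T_zD$ replaced by $T_z\de D$. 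With this definition, the equivalence ``$j^rf$ is not transverse to $W$ at $z$ iff $j^{r+1}f(z) \in \Sigma_W$'' is immediate from the definition of transversality to a Whitney stratification.

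Semialgebraicity is then routine: each $W_j$ is Nash, so the assignment $(z,\pi(\sigma)) \mapsto T_{(z,\pi(\sigma))}W_j$ is semialgebraic on the semialgebraic locus $\{\pi(\sigma) \in W_j\}$; the tangent to $\de D$ (a Nash sphere) and the linear map $L(\sigma)$ are semialgebraic in their arguments; and failure of surjectivity of a linear map is the semialgebraic rank condition given by vanishing of maximal minors. Being a finite union of semialgebraic sets, $\Sigma_W$ is semialgebraic.

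The main obstacle is closedness, and this is where Whitney's condition (a) plays a crucial role. Take $(z_n,\sigma_n) \to (z,\sigma)$ with each $(z_n,\sigma_n) \in \Sigma_W$. Pigeonholing on the finitely many strata, I pass to a subsequence so that every $(z_n,\sigma_n)$ witnesses membership in $\Sigma_W$ through the same stratum $W_j$ and the same branch of $(\ast)_j$ (interior or boundary; in the latter case $z_n \in \de D$ for all $n$, hence $z \in \de D$ by closedness of $\de D$). Then $\pi(\sigma) \in \overline{W_j}$, which is a union of strata, so $\pi(\sigma) \in W_{j'}$ for some $W_{j'} \subseteq \overline{W_j}$. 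Extracting further, I may assume $T_{(z_n,\pi(\sigma_n))}W_j$ converges in its Grassmannian to a subspace $\tau$. If $j' = j$, continuity of the tangent field on $W_j$ gives $\tau = T_{(z,\pi(\sigma))}W_j$; if $j' \ne j$, Whitney's condition (a) yields the inclusion $T_{(z,\pi(\sigma))}W_{j'} \subseteq \tau$. Since $L(\sigma_n) \to L(\sigma)$ and since the set of pairs of subspaces whose sum is a proper subspace of $T_{(z,\pi(\sigma))}J^r(D,\R^k)$ is closed, the sum $L(\sigma)(T_zD) + \tau$ (respectively $L(\sigma)(T_z\de D) + \tau$) is proper; and since replacing $\tau$ by the smaller $T_{(z,\pi(\sigma))}W_{j'}$ only shrinks the sum, $(\ast)_{j'}$ holds at $(z,\sigma)$. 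Thus $(z,\sigma) \in \Sigma_W$, proving closedness.
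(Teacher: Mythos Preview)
Your proposal is correct and follows essentially the same route as the paper: both encode non-transversality at $z$ as a rank/dimension condition on the $(r+1)$-jet, obtain semialgebraicity from the Nash structure of the strata and of $\partial D$, and deduce closedness from Whitney condition~(a). Your sequential closure argument is in fact more detailed than the paper's, which simply rewrites $\Sigma_W^1$ as $\pi_2^{-1}(W)$ intersected with a union of rank loci and invokes Whitney~(a) without spelling out the limit; the underlying mechanism (limits of tangent spaces contain the tangent to the limiting stratum, hence the non-surjectivity persists) is identical.
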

\begin{proof}Let $W=\coprod_{j}W_j$ be the given Whitney stratification of $W$. The condition that $j^rf:D\to J^{r}(D, \R^k)$ is not transverse to $W$ at $x$ means that $j^rf(x)\in W_j$ for some $j=1, \ldots, s$ and that:
\be\label{eq:strat}\mathrm{im}\left(d_x(j^rf)\right)+T_{j^rf(x)}W_j\neq T_{j^rf(x)}J^{r}(D, \R^k)\simeq \R^{p}.
\ee
For every stratum $W_j$ let $d_j$ denote its dimension and let $\tau_j:W_j\to \R^{d_j\times p}$ be a semialgebraic map such that for every $w\in W_j$ the columns of $\tau_j(w)$ are an orthonormal basis for $T_{w}W_j $ -- we can assume that $\tau_j$ is continuous, possibly after refining the stratification. Then the condition \eqref{eq:strat} can be written as:
\be \mathrm{rk}(d_xj^rf, \tau_j(j^rf(x)))\leq p-1.\ee
In particular, since both $d_xj^rf$ and $j^rf(x)$ are linear images of $j^{r+1}f(x)$, i.e. $d_xj^rf=\pi_1(j^{r+1}f(x))$ and $j^rf(x)=\pi_2(j^{r+1}f(x))$ for some semialgebraic maps $\pi_1:J^{r+1}(D, \R^k)\to J^1(D, J^r(d, \R^k))$ and  
$\pi_2:J^{r+1}(D, \R^k)\to J^{r}(D, \R^k),$ it follows that the condition that $j^rf$ is not transverse to $W$ can be written as:
there exists $x\in D$ such that $\pi_2(j^{r+1}f(x))\in W_j$ for some $j=1, \ldots, s$ and $\mathrm{rk}(\pi_1(j^{r+1}f(x)), \tau_j(\pi_2(j^{r+1}f(x)))\leq p-1$.

In other words, defining the semialgebraic set:
\begin{align} \Sigma_W^1&=\bigcup_{j=1}^s\pi_2^{-1}(W_j)\cap \left\{\textrm{rk}\left(\pi_1(\cdot), \tau_j(\pi_2(\cdot))\right)\leq p-1\right\}\\
&\label{eq:closed}=\pi_2^{-1}(W)\cap\left(\bigcup_{j=1}^s\left\{\textrm{rk}\left(\pi_1(\cdot), \tau_j(\pi_2(\cdot))\right)\leq p-1\right\}\right),
\end{align}
we see that $j^rf$ is not transverse to $W$ at $x$ if and only if $j^{r+1}f(x)\in \Sigma_W^1$. Analogously we can define a semialgerbaic set $\Sigma_W^2$ such that $j^rf|_{\partial D}$ is not transverse to $W$ if and only if $j^{r+1}f(x)\in \Sigma_W^2$ for some $x\in \partial D.$ The set $\Sigma_{W}$ is defined as the union of $\Sigma_W^1$ and $\Sigma_W^2.$

Let us now prove that $\Sigma_W$ is closed. We will show that $\Sigma_W^1$ is closed, the proof for $\Sigma_{W}^2$ is similar. Since $W$ is closed, it is enough to show that the set in the parenthesis in \eqref{eq:closed} is closed, and this follows immediately form the fact that the stratification for $W$ satisfies Whitney condition (a).
\end{proof}
\subsection{Distance to the discriminant}
\begin{definition}Let $W\subseteq J^{r}(D, \R^k)$ closed, semialgebraic and Whitney stratified. 
We define the quantity:
\be\label{eq:inf} \delta_W(f, D)=\inf_{z\in D}\mathrm{dist}(j^{r+1}f(z), \Sigma_{W, z}),\ee
where we set $\Sigma_{W, z}=\Sigma_W\cap J_z^{r+1}(D, \R^k)$ and we adopt the convention that if $\Sigma_{W, z}=\emptyset$ then $\mathrm{dist}(j^{r+1}f(z), \Sigma_{W, z})=\infty$.
\end{definition}
We introduce now the set of all maps which are not transverse to $W$.
\begin{definition}Let $W\subseteq J^{r}(D, \R^k)$ closed, semialgebraic and Whitney stratified. We denote by $\Delta_W\subset C^{r+1}(D, \R^k)$ the set
\be \Delta_W=\{g\in C^{r+1}(D, \R^k)\,|\, \textrm{$g$ is not transverse to $W$}\},\ee
and, for $z\in D$, by $\Delta_{W, z}(D)$ the set
\be \Delta_{W, z}=\{g\in C^{r+1}(D, \R^k)\,|\, \textrm{$g$ is not transverse to $W$ at $z$}\}.\ee
\end{definition}
Observe that, since transversality is an open condition, the set $\Delta_W$ is closed and
\be\label{eq:union} \Delta_W=\bigcup_{z\in D}\Delta_{W, z}.\ee
Moreover, by Proposition \ref{propo:sigma}, we have:
\be \Delta_{W, z}=\{g\in C^{r+1}\,|\, j^{r+1}g(z)\in \Sigma_{W, z}\}.\ee
Next result relates $\delta_W(f, D)$ with the distance from $\Delta_W$ in the space of $C^{r+1}$ maps. 
\begin{proposition}\label{propo:delta}There exists a constant $C=C(r,k, n)$ such that, for every $W\subseteq J^{r}(D, \R^k)$ closed, semialgebraic and Whitney stratified, and for every $f\in C^{r+1}(D, \R^k)$ we have:
\be \delta_W(f, D)\leq \mathrm{dist}_{C^{r+1}(D, \R^k)}(f, \Delta_W)\leq C \cdot \delta_{W}(f, D).\ee
\end{proposition}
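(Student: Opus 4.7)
The plan is to establish the two inequalities separately. For the easy direction $\delta_W(f,D) \leq \mathrm{dist}_{C^{r+1}}(f,\Delta_W)$, pick any $g \in \Delta_W$: by \eqref{eq:union} there exists $z_0 \in D$ with $g \in \Delta_{W,z_0}$, which by Proposition \ref{propo:sigma} means $j^{r+1}g(z_0)\in \Sigma_{W,z_0}$. Since the fiberwise norm on $J^{r+1}_{z_0}(D,\R^k)$ is dominated by the $C^{r+1}$ norm, one has
\begin{equation}
\delta_W(f,D)\leq \mathrm{dist}(j^{r+1}f(z_0),\Sigma_{W,z_0})\leq \|j^{r+1}f(z_0)-j^{r+1}g(z_0)\|\leq \|f-g\|_{C^{r+1}(D,\R^k)},
\end{equation}
and taking the infimum over $g$ yields the bound.

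The reverse inequality is the substantive part, and the idea is to build a concrete perturbation of $f$ whose $(r{+}1)$-jet at a well-chosen point lies in $\Sigma_W$. One may assume $\delta_W(f,D)$ is finite (otherwise $\Sigma_{W,z}$ is empty for every $z$, so $\Delta_W$ is empty, and both sides are infinite). Given $\varepsilon>0$, compactness of $D$ together with closedness of $\Sigma_W$ and the lower semicontinuity of $z\mapsto \mathrm{dist}(j^{r+1}f(z),\Sigma_{W,z})$ allow us to pick $z_0\in D$ and $J\in \Sigma_{W,z_0}$ with $\|J-j^{r+1}f(z_0)\|\leq \delta_W(f,D)+\varepsilon$. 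We then set
\begin{equation}
h(x)=\sum_{|\gamma|\leq r+1}\frac{(J-j^{r+1}f(z_0))_\gamma}{\gamma!}(x-z_0)^\gamma,
\end{equation}
so that $j^{r+1}h(z_0)=J-j^{r+1}f(z_0)$, and define $g:=f+h$. By construction $j^{r+1}g(z_0)=J\in\Sigma_{W,z_0}$, and hence $g\in\Delta_W$ by Proposition \ref{propo:sigma}.

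The key and final estimate is $\|h\|_{C^{r+1}(D,\R^k)}\leq C\,\|J-j^{r+1}f(z_0)\|$: this is the classical Whitney-type statement alluded to in the introduction, and follows by differentiating the polynomial $h$ explicitly and bounding the resulting sums of multinomial coefficients times powers of $|x-z_0|$, uniformly over $z_0\in D$, with a constant $C$ depending only on $r,n,k$ and the size of $D$. Combining with the construction gives $\mathrm{dist}_{C^{r+1}}(f,\Delta_W)\leq \|f-g\|_{C^{r+1}(D,\R^k)}=\|h\|_{C^{r+1}(D,\R^k)}\leq C(\delta_W(f,D)+\varepsilon)$, and letting $\varepsilon\to 0$ completes the proof. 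The main obstacle is precisely this uniform polynomial estimate; its dependence on the chosen fiberwise norm on jets is what makes the constant $C$ strictly larger than $1$ in general, as announced in the remark following the statement.
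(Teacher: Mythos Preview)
Your argument follows the same strategy as the paper's proof: the first inequality is obtained exactly as you do, and for the second the paper also produces, for each $z$ and each $\sigma\in\Sigma_{W,z}$, a function $g$ with $j^{r+1}g(z)=j^{r+1}f(z)-\sigma$ and $\|g\|_{C^{r+1}}\leq C\|j^{r+1}f(z)-\sigma\|$. The only real difference is in how this last estimate is obtained. The paper does not build the Taylor polynomial explicitly but simply quotes Whitney's classical result (in the form of \cite[Theorem~1]{fefferman}) on the infimum of the $C^{r+1}$ norm of a function with a prescribed jet at one point; this yields a constant $C=C(r,k,n)$ that is \emph{independent of the disk $D$}, matching the statement.

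Your explicit Taylor polynomial $h$ works, but as you yourself note, the bound $\|h\|_{C^{r+1}(D,\R^k)}\leq C\|J-j^{r+1}f(z_0)\|$ produces a constant that depends on the diameter of $D$ (the lower-order derivatives of $h$ are polynomials in $x-z_0$ whose sup on $D$ scales with $\mathrm{diam}(D)$). So strictly speaking you prove a slightly weaker inequality than the one stated. The fix is minor: either cite Whitney's result as the paper does, or multiply your $h$ by a fixed smooth cutoff $\chi$ with $\chi\equiv 1$ near $0$ and $\mathrm{supp}(\chi)$ contained in the unit ball, setting $\tilde h(x)=\chi(x-z_0)h(x)$. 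Then $j^{r+1}\tilde h(z_0)=j^{r+1}h(z_0)$ and $\|\tilde h\|_{C^{r+1}(\R^n,\R^k)}\leq C(r,k,n)\|J-j^{r+1}f(z_0)\|$, after which restriction to $D$ gives the domain-independent constant.
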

\begin{proof}Let us prove the first part of the statement:
\begin{align}\mathrm{dist}_{C^{r+1}(D, \R^k)}(f, \Delta_W)&=\inf_{s\in \Delta_{W}}\|f-s\|_{C^{r+1}(D, \R^k)}\\
&=\inf_{z\in D}\inf_{s\in \Delta_{W,z}}\|f-s\|_{C^{r+1}(D, \R^k)}& \textrm{using \eqref{eq:union}}\\
&=\inf_{z\in D}\inf_{s\in \Delta_{W,z}}\sup_{x\in D}\|j^{r+1}f(x)-j^{r+1}s(x)\|\\
&\geq\inf_{z\in D}\inf_{s\in \Delta_{W,z}}\|j^{r+1}f(z)-j^{r+1}s(z)\|\\
&\geq\inf_{z\in D}\inf_{s\in \Delta_{W,z}}\mathrm{dist}\left(j^{r+1}f(z), \Sigma_{W, z}\right)&\textrm{by Proposition \ref{propo:sigma}}\\
&=\inf_{z\in D}\mathrm{dist}\left(j^{r+1}f(z), \Sigma_{W, z}\right)\\
&=\delta_W(f, D).
\end{align}
For the second part of the statement we use a classical result by Whitney on the infimum of the $C^{r+1}$ norm of a function with prescribed $(r+1)$-th jet at \emph{one point} (see \cite[Theorem 1]{fefferman} for a modern reference): there exists $C=C(r, k, n)>0$ such that, given $P\in J^{r+1}_z(D, \R^k)$, we have 
\be\label{eq:whitney} \inf \left\{\|g\|_{C^{r+1}(D, \R^k)}\,|\, j^{r+1}g(z)=P\right\}\leq C\cdot \|P\|.\ee
Observe now that:
\begin{align}\inf_{s\in \Delta_{W, z}}\|f-s\|_{C^{r+1}(D, \R^k)}&=\inf\left\{\|f-s\|_{C^{r+1}(D, \R^k)}\,|\, j^{r+1}s(z)\in \Sigma_{W, z}\right\}\\
&=\inf_{\sigma(z)\in \Sigma_{W, z}}\,\inf_{j^{r+1}s(z)=\sigma(z)}\|f-s\|_{C^{r+1}(D, \R^k)}\\
&=\inf_{\sigma(z)\in \Sigma_{W, z}}\left(\inf\left\{\|f-s\|_{C^{r+1}(D, \R^k)}\,|\, j^{r+1}s(z)=\sigma(z)\right\}\right)\\
&=\inf_{\sigma(z)\in \Sigma_{W, z}}\left(\inf\left\{\|g\|_{C^{r+1}(D, \R^k)}\,|\, j^{r+1}g(z)=j^{r+1}f(z)-\sigma(z)\right\}\right)\\
&\leq \inf_{\sigma(z)\in \Sigma_{W, z}}\left(C\cdot \|j^{r+1}f(z)-\sigma(z)\|\right)\\
&\label{eq:last}= C\cdot \mathrm{dist}(j^{r+1}f(z), \Sigma_{W, z}),
\end{align}
where in the first line we have used Proposition \ref{propo:sigma}, and in the fifth line we have used Whitney's result \eqref{eq:whitney}. Taking now the infimum over $z\in D$ on both sides of \eqref{eq:last} concludes the proof.
\end{proof}
\begin{example}\label{ex:CgeOne}
In this example we show that in general $C>1$. Let us consider $D=[-1,1]$ and the ``critical points'' singularity $W\subset J^1(D, \R)$ defined by:
\be W=D\times \R\times \{0\}.\ee
In this way, given $f\in C^1(D, \R)$, we have that $j^1f^{-1}(W)$ consists of the set of critical points for $f$. The pointwise discriminant $\Sigma_{W, z}\subset J^2_z(D, \R)$ is
\be \Sigma_{W, z}=\{z\}\times \R\times \{0\}\times \{0\}.\ee
Let $f(x)=x.$ We claim that $\mathrm{dist}_{C^2}(f, \Delta_W)>\delta_W(f, D).$ To start with, observe that:
\begin{align}\delta_{W}(f, D)&=\inf_{z\in D}\mathrm{dist}(j^2f(z), \Sigma_{W, z})\\
&=\inf_{z\in D}\mathrm{dist}((z,z,1,0), \{z\}\times \R\times \{0\}\times\{0\})\\
&=\inf_{z\in D}\|(z, z, 1,0)-(z,z,0,0)\|=1.
\end{align}
We now assume that in this case $C=1$, which implies $\mathrm{dist}_{C^2}(f, \Delta_W)\leq 1$, and prove that this leads to a contradiction. 

If $C=1$, then for every $\epsilon>0$ there is $g\in\Delta_W$ such that:
\be \|f-g\|_{C^2}<1+\epsilon.\ee
Such a function $g$, being in $\Delta_W$, has a degenerate critical point at $x_0\in D$ and therefore:
\be\|f-g\|_{C_2}\geq \|j^2f(x_0)-j^2g(x_0)\|=\left(\left(x_0-g(x_0)\right)^2+1\right)^{1/2}.\ee
Together with this, the inequality $\mathrm{dist}_{C^2}(f, \Delta_W)\leq 1$ implies now that:
\be |x_0-g(x_0)|^2<\epsilon^2+2\epsilon.\ee
Let us now call $\phi=f-g.$ This function satisfies:
\be\label{eq:ff} |\phi(x_0)|<\sqrt{\epsilon^2+2\epsilon}, \quad \phi'(x_0)=1,\quad \phi''(x_0)=0 \quad \textrm{and}\quad \|\phi\|_{C^2}<1+\epsilon.\ee
We show that the existence for every $\epsilon>0$ of a $C^2$ function satisfying the the list of conditions \eqref{eq:ff} leads to a contradiction. To this end, consider the following differential inequality:
\be y^2+(y')^2\leq a^2,\quad y(0)=1, \quad y'(0)=0.\ee
Then it is easy to see (by direct integration) that:
\be y(x)\geq y(0)\cos x-\sqrt{a^2-y(0)}\sin x.\ee 
Letting now $y=\phi'$ and $a=1+\epsilon$, we get
\be \phi'(x)\geq \cos x-\sqrt{(1+\epsilon)^2-1}\sin x\ee
and, integrating
\be \phi(x)\geq \sin x +\sqrt{\epsilon^2+2\epsilon}(\cos x-1)-\sqrt{\epsilon^2+2\epsilon}.
\ee
This means that
\be(1+\epsilon)^2\geq \|j^2\phi(x)\|^2\geq 1+2(\epsilon^2+2\epsilon)(1-\cos x)-2\sqrt{\epsilon^2+2\epsilon}\sin x+\phi''(x)^2-O(\epsilon^{1/2}).
\ee
In turn this implies:
\be |\phi''(x)|\leq c \epsilon^{1/4},\ee
and consequently
\be \phi'(x)\geq 1-c\epsilon^{1/4}x\quad \textrm{and}\quad \phi(x)\geq x-c\epsilon^{1/4}x^{2}/2.\ee
In particular:
\be \|j^2\phi(1)\|^2\geq 2-O(\epsilon^{1/4}),\ee
which for $\epsilon>0$ small enough is a contradiction.\end{example}
\begin{remark}\label{remark:distance}For a given $f:\R^n\to \R$ of regularity class $C^1$, we can introduce the quantity:
\be \gamma(f,D)=\mathrm{dist}_{\R}(0, f(\mathrm{Crit}(f|_D)),\ee
i.e. the distance from zero to the critical values of $f|_D$.
Denoting as above by $\Delta=\Delta_W$ the set of functions for which $f=0$ is not regular on $D$, for every $f\in C^1(D, \R)$ we have:
\be \delta( f, D)\leq \mathrm{dist}_{C^1(D, \R)}(f, \Delta)\leq \gamma(f, D)\ee
The first inequality is just Proposition \ref{propo:delta}. For the second inequality, let $\gamma(f, D)=\gamma_0>0$ (otherwise $f\in \Delta$ and the inequality is trivial) and pick $\xi\in \R$ with $|\xi|=\gamma_0$ and $\xi$ a critical value of $f|_{D}$. Then $f-\xi\in \Delta$ and $f-t\xi\notin \Delta$ if $|t|<1$. Therefore:
\be \mathrm{dist}_{C^1(D, \R)}(f, \Delta)\leq \|f-(f-\xi)\|_{C^{1}(D, \R)}=\|\xi\|_{C^1(D, \R)}=|\xi|=\gamma_0.\ee
\end{remark}

\begin{definition}\label{def:condition}Let $W\subseteq J^{r}(D, \R^k)$ closed, semialgebraic and Whitney stratified and let $f\in C^{\ell}(D, \R^k)$ with $\ell\geq r+1$.
We define the $C^\ell$-condition number of $f$ on the disk $D$ with respect to $W$ as:
\be \kappa^{(\ell)}_W(f, D)=\frac{\|f\|_{C^\ell(D, \R^k)}}{\delta_W(f, D)}.\ee
\end{definition}
\begin{lemma}\label{lemma:transv}Let $W\subseteq J^{r}(D, \R^k)$ closed and semialgebraic, Whitney stratified. For every $f\in C^{r+1}(D, \R^k)$, we have that $j^rf$ is transverse to $W$ if and only if $\delta_W(f, D)>0$, and in this case $j^rf^{-1}(W)$ is a Whitney stratified set. In particular, if $W$ is smooth, then $j^rf^{-1}(W)$ is a smooth neat submanifold of $D$.
\end{lemma}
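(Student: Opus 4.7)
The plan is to address the three parts of the lemma in order. For the equivalence between transversality of $j^rf$ and positivity of $\delta_W(f,D)$, I would lean entirely on Proposition \ref{propo:sigma}: since $\Sigma_{W,z}$ is closed in $J^{r+1}_z(D,\R^k)$ and characterizes non-transversality of $j^rf$ at $z$, the pointwise transversality at $z$ is equivalent to $\mathrm{dist}(j^{r+1}f(z),\Sigma_{W,z})>0$ (with $\mathrm{dist}(\cdot,\emptyset)=+\infty$). Hence $\delta_W(f,D)>0$ immediately gives transversality on all of $D$, while the converse requires upgrading pointwise positivity of
\[ g(z)=\mathrm{dist}(j^{r+1}f(z),\Sigma_{W,z}) \]
to a uniform bound via compactness of $D$ and a lower-semicontinuity argument for $g$.

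To prove lower semicontinuity, I would take $z_n\to z$ with $g(z_n)\to \ell$. If $\ell<+\infty$, choose $\sigma_n\in \Sigma_{W,z_n}$ with $\|j^{r+1}f(z_n)-\sigma_n\|\leq g(z_n)+1/n$. Boundedness of $j^{r+1}f$ on the compact set $D$ and finiteness of $\ell$ force $\{\sigma_n\}$ to be bounded in $J^{r+1}(n,k)$, so after passing to a subsequence $\sigma_{n_k}\to\sigma$. Closedness of $\Sigma_W$ (part of Proposition \ref{propo:sigma}) implies $(z,\sigma)\in\Sigma_W$, so $\sigma\in\Sigma_{W,z}$, and therefore $g(z)\leq \|j^{r+1}f(z)-\sigma\|=\ell$. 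The case $\ell=+\infty$ is trivial. Given pointwise positivity $g>0$ from transversality, the LSC function $g$ attains its infimum on the compact set $D$, giving $\delta_W(f,D)=\inf_{z\in D}g(z)>0$.

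For the second claim, I would stratify $j^rf^{-1}(W)$ by the collection $\{j^rf^{-1}(W_j)\}$ coming from the given Whitney stratification $W=\coprod_j W_j$. Transversality of $j^rf$ and of $(j^rf)|_{\partial D}$ to each stratum $W_j$ ensures that every $j^rf^{-1}(W_j)$ is a Nash submanifold of $D$ of codimension equal to the codimension of $W_j$ in $J^r(D,\R^k)$. The verification of Whitney condition (a) for a pair $W_{j_1}\subset \mathrm{clos}(W_{j_2})$ in the preimage stratification reduces, after taking sequences $x_n\in j^rf^{-1}(W_{j_2})$ with $x_n\to y\in j^rf^{-1}(W_{j_1})$ and $T_{x_n}j^rf^{-1}(W_{j_2})\to \tau$, to pulling tangent spaces through $d_{x_n}(j^rf)$ and applying Whitney (a) in $J^r(D,\R^k)$ for the pair $W_{j_1},W_{j_2}$; the transversality hypothesis guarantees that the ranks are preserved in the limit, so the limit of tangent spaces contains $T_y j^rf^{-1}(W_{j_1})$. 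This is the standard fact that transverse preimages preserve Whitney (a), and the main obstacle is just organizing the linear-algebraic bookkeeping.

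For the last sentence, if $W$ is a smooth submanifold (a single stratum), then the transversality of $j^rf$ alone produces a smooth submanifold $j^rf^{-1}(W)\subset\mathrm{int}(D)$, and the additional assumption that $(j^rf)|_{\partial D}\pitchfork W$ forces $j^rf^{-1}(W)$ to meet $\partial D$ transversely, giving $\partial\bigl(j^rf^{-1}(W)\bigr)=j^rf^{-1}(W)\cap\partial D$ as a smooth submanifold of $\partial D$; this is precisely the neatness condition. I expect the only non-routine step in the whole argument to be the LSC of $g$, since the closedness of $\Sigma_W$ and the compactness of $D$ are both needed and their interplay is the content of the equivalence.
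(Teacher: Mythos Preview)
Your argument is correct and follows the same outline as the paper's proof: use Proposition~\ref{propo:sigma} to reduce the equivalence to positivity of $g(z)=\mathrm{dist}(j^{r+1}f(z),\Sigma_{W,z})$ on the compact $D$, and then quote standard transversality for the stratified and smooth preimage statements. In fact your treatment of the equivalence is \emph{more} careful than the paper's. The paper simply asserts that $g$ is continuous on $D$; this can fail, since the fibers $\Sigma_{W,z}$ may jump (for instance, $\Sigma_{W,z}$ can be empty for some $z$ and nonempty for nearby $z'$, giving $g(z)=+\infty$ and $g(z')<+\infty$). What is always true, and what is all that is needed, is exactly the lower semicontinuity you prove: closedness of $\Sigma_W$ in $J^{r+1}(D,\R^k)$ together with compactness of $D$ lets you pass to a limit of near-minimizers $\sigma_n\in\Sigma_{W,z_n}$ and land in $\Sigma_{W,z}$. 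Your squeeze at the end indeed gives the equality $\|j^{r+1}f(z)-\sigma\|=\ell$, so the sentence is fine as written.

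One small slip: in the second part you call each $j^rf^{-1}(W_j)$ a \emph{Nash} submanifold. That need not hold, since $f$ is only $C^{r+1}$ and $j^rf$ is only $C^1$; the preimage is a $C^1$ submanifold but has no reason to be semialgebraic. This does not affect the Whitney~(a) verification, which is a statement about tangent spaces and goes through verbatim for $C^1$ strata. The paper, for its part, does not spell out this verification at all and simply invokes ``standard transversality theorems'', so your sketch already goes further than what is written there.
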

\begin{proof}If $j^{r}f$ is transverse to $W$, then for no point $z\in D$ we have $j^{r+1}f(z)\in \Sigma_{W, z}$ by Proposition \ref{propo:sigma}; this means that for every $z\in W$ we have $\mathrm{dist}(j^{r+1}f(z), \Sigma_{W, z})>0$, and since the function $z\mapsto \mathrm{dist}(j^{r+1}f(z), \Sigma_{W, z})$ is continuous on the compact set $D$, then the infimum in \eqref{eq:inf} is a minimum and this minimum must be positive. Viceversa, if $\delta_W(f, D)>0$ then  $\mathrm{dist}(j^{r+1}f(z), \Sigma_{W, z})>0$ for every $z\in D$ and $j^rf$ is transverse to $W$ by Proposition \ref{propo:sigma}.

When $\delta_W(f, D)>0$, the fact that $j^{r}f^{-1}(W)$ is a stratified set is a standard application of the transversality theorems; similarly in the case $W$ is smooth.\end{proof}

\subsection{Quantitative transversality}
\begin{lemma}\label{lemma:qt}Let $W\subseteq J^{r}(D, \R^k)$ closed and semialgebraic, Whitney stratified and $f\in C^{r+1}(D, \R^k)$ such that $j^rf$ is transverse to $W$ (in particular $\delta_W(f, D)>0$). For every $g\in C^{r+1}(D, \R^k)$ such that
\be \|f-g\|_{C^{r+1}(D, \R^k)}<\mathrm{dist}_{C^{r+1}}(f, \Delta_W),\ee the two pairs $(D, j^rf^{-1}(W))$ and $(D, j^rg^{-1}(W))$ are isotopic. In particular, the result holds if $\|f-g\|_{C^{r+1}(D, \R^k)}<\delta_W(f, D)$.
\end{lemma}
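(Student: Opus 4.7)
The plan is to use a straight-line homotopy between $f$ and $g$ together with Thom's first isotopy lemma. First, set $f_t = f + t(g-f)$ for $t \in [0,1]$. Since
$$\|f_t - f\|_{C^{r+1}(D, \R^k)} = t\|g-f\|_{C^{r+1}(D, \R^k)} \leq \|g-f\|_{C^{r+1}(D, \R^k)} < \mathrm{dist}_{C^{r+1}}(f, \Delta_W),$$
every $f_t$ lies in the open complement of $\Delta_W$, so $j^r f_t \pitchfork W$ for all $t \in [0,1]$ (with the boundary transversality also holding). Note the ``in particular'' assertion of the lemma then follows from the left inequality of Proposition \ref{propo:delta}.

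Next I would set up the natural family. Consider the jet evaluation map
$$F \colon D \times [0,1] \longrightarrow J^r(D, \R^k), \qquad F(z,t) = j^r f_t(z),$$
and its restriction $F_\partial$ to $\partial D \times [0,1]$. The transversality of each $j^r f_t$ (both on the interior and on the boundary) to every stratum of the Whitney stratification \eqref{eq:Whitney} of $W$ is equivalent, via a standard argument, to the transversality of $F$ (respectively $F_\partial$) to those same strata viewed as strata of $W$ inside $J^r(D, \R^k)$. Consequently, the set
$$\Sigma = F^{-1}(W) = \{(z,t) \in D \times [0,1] : j^r f_t(z) \in W\}$$
is a Whitney stratified subset of $D \times [0,1]$, each stratum obtained as the preimage under $F$ of a stratum of $W$, and the analogous statement holds for $\Sigma \cap (\partial D \times [0,1])$.

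Now I would exploit the projection $\pi \colon D \times [0,1] \to [0,1]$. By construction, on every stratum of $\Sigma$ the map $\pi|_\Sigma$ is a submersion, essentially because the ``$t$-direction'' is already transverse to each stratum of $\Sigma$ thanks to the transversality of every $j^r f_t$ to $W$; the same holds for $\Sigma \cap (\partial D \times [0,1])$. Since $D$ is compact, $\pi|_\Sigma$ is also proper. Therefore I can produce a smooth time-dependent vector field $X_t$ on $D$, tangent to $\partial D$, whose lift $X_t + \partial_t$ to $D \times [0,1]$ is tangent to every stratum of $\Sigma$; such a field exists by a partition-of-unity construction combined with local controlled lifts provided by Thom--Mather theory (this is where the Whitney (a) condition enters). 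Integrating $X_t$ yields a smooth isotopy $\varphi_t \colon D \to D$ with $\varphi_0 = \mathrm{id}_D$, $\varphi_t(\partial D) = \partial D$, and $\varphi_t(j^r f_0^{-1}(W)) = j^r f_t^{-1}(W)$; evaluating at $t = 1$ gives the desired isotopy between the pairs $(D, j^r f^{-1}(W))$ and $(D, j^r g^{-1}(W))$.

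The step I expect to be the most delicate is not the homotopy argument itself but the simultaneous handling of the interior and boundary transversality conditions, so that the resulting vector field is smoothly tangent to $\partial D$ while still being tangent to every stratum of $\Sigma$; this is the technical content of Thom's first isotopy lemma adapted to manifolds with boundary, and is the reason we incorporated transversality of $(j^r f_t)|_{\partial D}$ into the definition of $\Delta_W$.
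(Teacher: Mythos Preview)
Your argument is correct and follows exactly the route taken in the paper: straight-line homotopy $f_t=f+t(g-f)$, observe it stays outside $\Delta_W$, and invoke Thom's first isotopy lemma; the ``in particular'' is deduced from Proposition~\ref{propo:delta}. The paper's proof is a three-line version of what you wrote, simply citing \cite[Th\'eor\`eme~2.D.2]{Thom} for the isotopy, whereas you have unpacked the mechanism (the family map $F$, the stratified submersion $\pi|_\Sigma$, and the vector-field construction handling $\partial D$). One small wording issue: slice-wise transversality of each $j^rf_t$ \emph{implies} transversality of the total map $F$ (and the submersion property of $\pi|_\Sigma$), but the two are not literally ``equivalent''; the implication you actually use is the correct one.
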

\begin{proof}
By assumption on $\|f-g\|_{C^{r+1}}$, the homotopy $f_t=f+t(g-f):D\to \R^k$, for $t\in [0, 1]$ is all disjoint from $\Delta_W$. In particular the corresponding homotopy $j^rf_t:D\to J^{r}(D, \R^k)$ is all transverse to $W$ and the result follows from Thom's Isotopy Lemma \cite[Th\'eor\`eme 2.D.2]{Thom}. As for the second part of the statement: by Proposition \ref{propo:delta} we have $\delta_W(f, D)\leq \mathrm{dist}_{C^{r+1}}(f, \Delta_W)$ and the conclusion follows from the first part.
\end{proof}

\subsection{Semicontinuity of Betti numbers}
The key ingredient in the proof of the topological bound of Theorem \ref{thm:introsingsemiwitdash} is the following result. Applied to the case of $W\subset J^r(D,\R^k)\subset \R^p$, $\f=j^rf$ and $\psi=j^rg$, it allows to control, from below, the Betti numbers of the singularity $Z=(j^rg)^{-1}(W)$, provided that $g$ is close enough in the $\mathcal{C}^r$ topology to a known function $f$, such tht $j^rf\pitchfork W$. In this sense, it plays a role analogous to that played by Lemma \ref{lemma:qt} in the proof of Theorem \ref{thm:mainapp}, with the significant difference that it requires only to know the $C^r$ distance between $f$ and $g$.
\begin{theorem}[Theorem $2$ from \cite{mttrp}]\label{thm:semiconti}
Let $\f\colon D\to \R^p$ be a $C^{1}$ function and let $W\subset \R^p$ be a smooth closed submanifold.
Assume that $\f\pitchfork W$ and that the set $Z=\f^{-1}(W)$ is contained in the interior of $D$. Let $E\subset E_1\subset \text{int}(D)$ be tubular neighborhoods of $Z$ with the property that $\overline{E}\subset E_1$\footnote{Only the \emph{existence} of $E_1$ is needed.}.
\begin{enumerate}
    \item\label{semi:itm:openness} Let the space $C^0(D,\R^p)$ be topologized by the standard $C^0$ norm. Define the set  $\mathcal{U}_{E,\f}$ as the connected component containing $\f$ of the set
    \be\label{semi:eq:U}
    \mathcal{U}_E=\left\{\psi\in\mathcal{C}^0(D,\R^p)\colon \psi^{-1}(W)\subset E\right\}.
    \ee
    Then $\mathcal{U}_{E,\f}$ is open in $C^0(D,\R^p)$.
    \item Let $\check{b}(Z)$ denote the sum of the \v{C}ech Betti numbers of $Z$. For any $g\in\mathcal{U}_{E,\f}$, we have 
    \be \label{eq:cc}
    b(Z)\le \check{b}\left(\psi^{-1}(W)\right).
    \ee
\end{enumerate}
\end{theorem}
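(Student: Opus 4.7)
For part (\ref{semi:itm:openness}) I would prove the stronger statement that $\mathcal{U}_E$ itself is open. Given $\f\in\mathcal{U}_E$, the compact set $\f(D\setminus E)$ is disjoint from the closed set $W$, so $\varepsilon=\mathrm{dist}(\f(D\setminus E),W)>0$; any $\psi\in C^0(D,\R^p)$ with $\|\psi-\f\|_{C^0}<\varepsilon$ still satisfies $\psi^{-1}(W)\subset E$. Since $C^0(D,\R^p)$ is locally path-connected (being a normed space), the connected component $\mathcal{U}_{E,\f}$ of the open set $\mathcal{U}_E$ is itself open.

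For the inequality \eqref{eq:cc}, the plan is to construct a canonical linear map
\begin{equation*}
\Phi_\psi\colon H^*(Z)\longrightarrow \check{H}^*(\psi^{-1}(W))
\end{equation*}
and prove it is injective, which will yield $b(Z)\le\check{b}(\psi^{-1}(W))$. The construction combines two ingredients. First, the tubular neighborhood $E_1$ deformation retracts onto $Z$, giving $\rho^*\colon H^*(Z)\xrightarrow{\cong} H^*(E_1)$. Second, the continuity of \v{C}ech cohomology yields $\check{H}^*(\psi^{-1}(W))=\varinjlim_V H^*(V)$, where $V$ ranges over open neighborhoods of $\psi^{-1}(W)$ in $E_1$. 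Composing $\rho^*$ with the restrictions $H^*(E_1)\to H^*(V)$ and passing to the direct limit defines $\Phi_\psi$; at $\psi=\f$ this is the canonical identification $H^*(Z)\cong\check{H}^*(Z)$.

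To establish injectivity of $\Phi_\psi$ for every $\psi\in\mathcal{U}_{E,\f}$, I would fix a continuous path $\{\psi_t\}_{t\in[0,1]}$ in $\mathcal{U}_E$ from $\f$ to $\psi$ and analyze $T=\{t\in [0,1]:\Phi_{\psi_t}\text{ is injective}\}$. Closedness of $T$ is straightforward: for each fixed $\alpha\in H^*(Z)$, the set $\{t:\Phi_{\psi_t}(\alpha)=0\}$ is open, because if $\rho^*\alpha$ restricts to zero on an open neighborhood $V_0\subset E_1$ of $\psi_{t_0}^{-1}(W)$ then the $C^0$-upper semicontinuity of preimages forces $\psi_t^{-1}(W)\subset V_0$ for all $t$ near $t_0$; hence $T$ is an intersection of closed sets.

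The main obstacle is showing $T$ is also open, which is the genuine lower-semicontinuity content of the theorem: a priori, small $C^0$-perturbations could introduce new elements into the kernel of $\Phi$. The buffer $\overline{E}\subset E_1$ is used here to invoke Alexander--Lefschetz duality in the oriented manifold $E_1$, identifying $\check{H}^*(\psi^{-1}(W))$ with the (Borel--Moore) cohomology of the complement $E_1\setminus \psi^{-1}(W)$. Along the path $\psi_t$, these complements admit compatible continuous maps into the homotopically simple space $\R^p\setminus W$, whose induced maps on cohomology are stable under $C^0$-homotopy; this stability is what forces the kernel of $\Phi_{\psi_t}$ not to jump, yielding openness of $T$ and closing the argument.
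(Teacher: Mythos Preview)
The paper does not contain a proof of this theorem. It is quoted from \cite{mttrp}, and the Remark immediately following the statement explicitly defers the proof of the version stated here to the second author's PhD thesis \cite{MSphd}. There is therefore no proof in the present paper against which to compare your proposal.

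On the merits of your attempt: your argument for part~(\ref{semi:itm:openness}) is correct and standard. For part~(2), the construction of $\Phi_\psi$ via $H^*(Z)\cong H^*(E_1)\to\varinjlim_V H^*(V)\cong\check H^*(\psi^{-1}(W))$ is the right object, and your closedness argument for $T$ is fine. However, the openness of $T$ --- which you yourself identify as the ``main obstacle'' --- is not actually established. You gesture at Alexander--Lefschetz duality and at maps into $\R^p\setminus W$, but the domains $E_1\setminus\psi_t^{-1}(W)$ vary with $t$, so it is unclear what ``stability under $C^0$-homotopy'' is supposed to mean here, or how it would prevent $\ker\Phi_{\psi_t}$ from acquiring a new nonzero class under a small perturbation. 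Nor is $\R^p\setminus W$ ``homotopically simple'' in any sense that obviously helps. This step is precisely the nontrivial content of the result, and as written your sketch does not supply it; what you have is a plausible outline rather than a proof.
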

\begin{remark}
The statement reported in \cite{mttrp} is weaker than the one above, in the sense that it doesn't specifies the neighborhood $\mathcal{U}_{E,\f}$ and it requires that $\psi\pitchfork W$. The proof of this stronger form can be found in the second author's PhD thesis \cite{MSphd}. In the proof of \cite[Theorem 2]{mttrp}, the condition on $\psi\pitchfork W$ is needed in order to guarantee that a neighborhood of the zero set retracts to it. In this context, let us notice that we will use Theorem \ref{thm:semiconti} in the proof of Theorem \ref{thm:semiwitdash} and of Theorem \ref{thm:introsingsemiwitdash}, where $\psi$ will be a polynomial and $W$ a semialgebraic subset. In this case, being the preimage of a semialgebraic set via a polynomial map semialgebraic, the condition of having a neighborhood which retracts on it is already verified; moreover, for the same reason, on the right hand side of \eqref{eq:cc} one can take ordinary Betti numbers (as \v{C}ech cohomology coincides with the singular one). Finally, the description of the neighborhood as in \eqref{semi:eq:U}, even if not stated in the original theorem \cite[Theorem 2]{mttrp}, is evident from its proof.
\end{remark}

\section{Polynomial approximation of singularities}

\subsection{Proof of Theorem \ref{thm:mainapp}}
In the sequel we will need the following quantitative versions of Weierstrass' Approximation Theorem from \cite{BBL}.

\begin{theorem}[Theorem 2 from \cite{BBL}]\label{thm:bblapprox}
For every $r\geq 0$ there exists a constant $a_r(D)>0$ such that for every $f\in C^{r}(D, \R)$ and for every $d\geq0$ there is a polynomial $p_d(f)\in \R[x_1, \ldots, x_n]$ of degree at most $d$ such that for every $\ell\leq \min\{r, d\}$:
\be\|f-p_d(f)\|_{C^{\ell}(D, \R)}\leq \frac{a_r(D)}{d^{r-\ell}}\cdot \|f\|_{C^r(D, \R)}.
\ee
\end{theorem}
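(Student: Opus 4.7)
The plan is to prove this quantitative Weierstrass estimate in three stages: extension, construction of $p_{d}$ via a polynomial smoothing operator, and a Taylor-remainder estimate for each derivative.

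First, since $D$ is a disk in $\R^{n}$, I would extend $f$ to $F\in C^{r}_{c}(\R^{n})$ via Whitney's extension theorem, with $\|F\|_{C^{r}(\R^{n})}\le C_{0}(D,r)\,\|f\|_{C^{r}(D)}$ and support contained in a slight enlargement of $D$. After a bounded rescaling I may place the support of $F$ inside a fixed small cube strictly inside $(-1,1)^{n}$, reducing the problem to polynomial approximation on this cube. Then I would construct $p_{d}$ as $p_{d}(x)=\int F(x-y)\,K_{d}(y)\,dy$, where $K_{d}$ is a multivariate polynomial kernel of degree $O(d)$, nonnegative, with $\int K_{d}=1$ and moment bounds $\int|y|^{s}K_{d}(y)\,dy\le C_{s}\,d^{-s}$ for $0\le s\le r$. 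Because $F$ has compact support and $K_{d}$ is polynomial in $y$, $p_{d}$ is a polynomial in $x$ of degree $O(d)$, the constant factor being absorbed into $a_{r}(D)$.

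To estimate $\|F-p_{d}\|_{C^{\ell}}$ for $\ell\le\min\{r,d\}$, differentiating under the integral and writing, for $|\alpha|=\ell$,
\be
\partial^{\alpha}F(x)-\partial^{\alpha}p_{d}(x)=\int\bigl(\partial^{\alpha}F(x)-\partial^{\alpha}F(x-y)\bigr)K_{d}(y)\,dy,
\ee
I would Taylor-expand $\partial^{\alpha}F(x-y)$ around $x$ to order $r-\ell-1$. The polynomial terms are killed by the symmetry/moment conditions on $K_{d}$, and the remainder is bounded pointwise by $|y|^{r-\ell}\,\|F\|_{C^{r}}$. Combining with the moment estimate $\int|y|^{r-\ell}K_{d}(y)\,dy\le C\,d^{-(r-\ell)}$ yields $\|F-p_{d}\|_{C^{\ell}(D)}\le a_{r}(D)\,d^{-(r-\ell)}\,\|f\|_{C^{r}(D)}$, as desired.

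The main technical obstacle is constructing a polynomial kernel $K_{d}$ with simultaneous control of all moments up to order $r$ while keeping the degree $O(d)$. The pointwise Landau kernel $c_{d}(1-|y|^{2})^{d}$ has the wrong moment scaling ($d^{-s/2}$ rather than $d^{-s}$) and gives only the Bernstein rate; for the sharp Jackson rate claimed in the theorem one needs either a tensor product of one-dimensional Jackson trigonometric kernels, pulled back via the Chebyshev substitution $x=\cos\theta$ to produce algebraic polynomials with the correct moment bounds, or a de la Vall\'ee Poussin-style Chebyshev partial-sum construction. In both cases one has to bookkeep carefully across the $n$ coordinate directions to keep the polynomial degree linear in $d$, and to transfer estimates from the enclosing cube back to $D$ at the cost of a constant $a_{r}(D)$ depending on the geometry of $D$.
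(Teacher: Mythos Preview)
The paper does not prove this statement at all: Theorem~\ref{thm:bblapprox} is quoted verbatim as ``Theorem~2 from \cite{BBL}'' and used as a black box, so there is no proof in the paper to compare your proposal against. Your task was therefore to reproduce a result from the literature rather than a result of the present paper.

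As for the content of your sketch: the overall architecture (extend, convolve with a polynomial kernel, Taylor-expand the error) is the standard route, and you correctly flag the crux, namely that the naive Landau kernel only gives the Bernstein rate $d^{-(r-\ell)/2}$ and not the Jackson rate $d^{-(r-\ell)}$ claimed here. However, what you write after that point is not yet a proof but a list of possible fixes. A genuine gap remains: you have not actually exhibited a polynomial kernel $K_d$ on $\R^n$ of degree $O(d)$ with $\int K_d=1$ and $\int |y|^s K_d(y)\,dy\le C_s d^{-s}$ for all $s\le r$, nor explained precisely over which domain these integrals are taken (a polynomial is not integrable on $\R^n$, so the support/domain bookkeeping matters and interacts with the compact support of $F$). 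The Chebyshev/de~la~Vall\'ee~Poussin route you allude to does work, but carrying it out in several variables with simultaneous control of all derivatives up to order $r$ is exactly the substance of the Bagby--Bos--Levenberg paper cited as \cite{BBL}; it is not a routine detail. If you want to claim a self-contained proof you must either construct such a kernel explicitly or cite the relevant one-dimensional Jackson kernel and show carefully that the tensor-product construction keeps the degree linear in $d$ and preserves the moment bounds after the Chebyshev change of variables.
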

\begin{corollary}
For every $r\geq 0$ there exists a constant $a_{r+2}(D)>0$ such that for every $f\in C^{r+2}(D, \R)$ and for every $\e>0$ there is a polynomial $p_d(f)\in \R[x_1, \ldots, x_n]$ of degree at most $d$ such that $\|f-p_d(f)\|_{C^{r+1}(D, \R)}<\e$ and:
\be
d\le \max\left\{r+1, \frac{\|f\|_{C^{r+2}(D, \R)}}{\e}\cdot a_{r+2}(D) \right\}.
\ee
\end{corollary}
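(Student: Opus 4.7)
The plan is to invert the error estimate of Theorem \ref{thm:bblapprox}, applied with smoothness index $r+2$ in place of the theorem's $r$ and approximation level $\ell = r+1$. Provided $d\geq r+1$ (so that the hypothesis $\ell \leq \min\{r+2, d\}$ holds), the theorem supplies, for each such $d$, a polynomial $p_d(f)$ of degree at most $d$ with
\[
\|f-p_d(f)\|_{C^{r+1}(D, \R)} \leq \frac{a_{r+2}(D)}{d}\cdot \|f\|_{C^{r+2}(D, \R)},
\]
where $a_{r+2}(D)$ is the constant produced by Theorem \ref{thm:bblapprox}.

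Next I would impose the requirement that the right-hand side above be smaller than $\e$. This forces $d > a_{r+2}(D)\|f\|_{C^{r+2}}/\e$, which together with the structural lower bound $d\geq r+1$ leads to the natural choice of $d$ as the smallest integer satisfying both inequalities. Such an integer exists and is bounded above by $\max\{r+1,\, a_{r+2}(D)\|f\|_{C^{r+2}}/\e\}+1$. The additive $+1$ coming from the ceiling, and similarly the upgrade from a non-strict to a strict bound, are absorbed by simply redefining the constant in the corollary to be, say, twice the one provided by Theorem \ref{thm:bblapprox}; this is legitimate because the corollary introduces its own constant (reusing the same name for convenience).

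The only conceivable obstacle is bookkeeping: one must check that the integer rounding and the strict-versus-nonstrict distinction do not spoil the advertised bound on $d$. Both issues dissolve at the cost of enlarging the constant by a universal factor, so there is no mathematical content beyond a direct inversion of Theorem \ref{thm:bblapprox}.
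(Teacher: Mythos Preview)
Your proposal is correct and follows exactly the approach implicit in the paper: the corollary is stated without its own proof, and the argument you describe (apply Theorem~\ref{thm:bblapprox} with regularity $r+2$ and $\ell=r+1$, then invert the estimate and absorb the rounding into the constant) is precisely what the paper carries out explicitly in the subsequent proof of Theorem~\ref{thm:mainapp}, where the factor~$2$ plays the role of your enlarged constant.
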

With this result available, we prove now Theorem \ref{thm:mainapp}.

Let $f=(f_1, \ldots, f_k)$ and use Theorem \ref{thm:bblapprox} to get for every $d\in \mathbb{N}$ a polynomial map $p_d(f)=(p_d(f_1), \ldots, p_d(f_k))$ such that:
\be\label{eq:ineq} \|f-p_d(f)\|_{C^{r+1}(D, \R^k)}=\left(\sum_{i=1}^k \|f_i-p_d(f_i)\|_{C^{r+1}(D, \R)}^2\right)^{1/2}\leq \frac{a_{r+2}(D)}{d}\cdot \|f\|_{C^{r+2}(d, \R^k)}.\ee
Choose now the approximating degree to be
\be d=\max\left\{r+1,\left\lfloor\frac{\|f\|_{C^{r+2}(D, \R^k)}}{\mathrm{dist}_{C^{r+1}}(f, \Delta_W)}\cdot 2a_{r+2}(D)\right\rfloor\right\}.\ee
Then
\be a_{r+2}(D)\cdot\frac{\|f\|_{C^{r+2}(D, \R^k)}}{\mathrm{dist}_{C^{r+1}}(f, \Delta_W)} <2 a_{r+2}(D)\cdot \frac{\|f\|_{C^{r+2}(D, \R^k)}}{\mathrm{dist}_{C^{r+1}}(f, \Delta_W)}\leq d,
\ee
and the inequality \eqref{eq:ineq} becomes:
\be \|f-p_d(f)\|_{C^{r+1}(D, \R^k)} <\mathrm{dist}_{C^{r+1}}(f, \Delta_W).\ee
The conclusion follows now from Lemma \ref{lemma:qt}.
\subsection{Proof of Corollary \ref{coro:betti}}Let $p_d(f):D\to \R^k$ be the polynomial map given by Theorem \ref{thm:mainapp}, with:
\be d=\deg(p_d(f))\leq c_1(r,D)\cdot\max\left\{r+1,  \frac{\|f\|_{C^{r+2}(D, \R^k)}}{\mathrm{dist}_{C^{r+1}}(f, \Delta_W)}\right\}\ee
and such that
\be (D, j^rf^{-1}(W))\sim (D, j^{r}p_d(f)^{-1}(W)).\ee
Observe that $j^{r}p_d(f):D\to D\times J^{r}(n,k)$ is also a polynomial map, with each component of degree bounded by $d$. The set $W\subset J^{r}(D, \R^k)$ can be described by a quantifier-free Boolean formula without negations, involving a family of polynomials $\mathcal{Q}=\{q_1, \ldots, q_s\}$ with $s=s(W)$ and $\deg(q_i)\leq a_1(W)$, whose atoms are of the form $q_i\leq0$, $q_i\geq0$ or $q_i=0.$
In particular $j^rp_d(f)^{-1}(W)$ can also be described by a quantifier-free Boolean formula without negations, involving a family of $s$ polynomials of degrees  bounded by $d\cdot a_1(W)$, whose atoms are of the form $q_i\circ j^rp_d(f)\leq0$, $q_i\circ j^rp_d(f)\geq0$ or $q_i\circ j^rp_d(f)=0.$
We are therefore in the position of applying \cite[Theorem 1]{Basu} and we get the existence of a constant $a_2>0$ such that:
\begin{align} b(j^rf^{-1}(W))&=b(j^rp_d(f)^{-1}(W))\\
&\leq a_2^n s(W)^n(a_1(W)d)^n\\
&\leq  (a_2 s(W)a_1(W))^n\left(c_1(r,D)\cdot \max\left\{r+1, \frac{\|f\|_{C^{r+2}(D, \R^k)}}{\mathrm{dist}_{C^{r+1}}(f, \Delta_W)}\right\}\right)^n\\
&\leq c_2(W)\cdot \max\left\{r+1, \frac{\|f\|_{C^{r+2}(D, \R^k)}}{\mathrm{dist}_{C^{r+1}}(f, \Delta_W)}\right\}^n.\end{align}
\subsection{Proof of Theorem \ref{thm:introsingsemiwitdash}}\label{sec:singbetti}
Let us make the identification 
\be 
J^{r+1}(D,\R^k)\cong D\times \R^p\times \R^q,
\ee in such a way that the canonical projection $j^{r+1}_zf\to j^r_zf$ corresponds to the map $(z,\xi,\eta)\mapsto (z,\xi)$ and the so called ``source map'' $j^{r+1}_zf\mapsto z$ corresponds to the projection on the first factor.
\begin{definition}
Let $W\subset J^r(D,\R^k)\cong D\times \R^p$ be a smooth closed semialgebraic submanifold and let $\theta\in W$. 
\be 
C(T_\t W):= \left\{\a\in \R^{n\times (n+p)}\ \Big| \dim\left(\mathrm{im}(\a)+T_\t W\right)< n+p \right\}
\ee
\end{definition}
The set $C(T_\t W)$ is the semialgebraic subset of the space $\R^{n\times (n+p)}$ consisting of all the linear maps $\R^n\to\R^{n+p}$ that are not transverse to the vector space $T_\theta W\subset \R^{n+p}$. 

Let $W$ be compact. Suppose that $W\pitchfork J^r_z(D,\R^k)$ for all $z\in D$, where in our identification $J^r_z(D,\R^k)=\{z\}\times \R^p$, and define $W_z=W\cap J^r_z(D,\R^k)$. Then the set 
\be\label{eq:balldabliu} 
B_\e(W):=\left\{(z,\xi)\in J^r(D,\R^k)\ \Big|\  \mathrm{dist}\left((z,\xi),W_z\right)< \e \right\}
\ee
is a smooth open tubular neighborhood of $W$, for small enough $\e$ whose fibers are contained in the spaces $J^r_z(D,\R^k)$ (here we are using the transversality assumption). Let us denote by $\e_0(W)$ the supremum of the set of all $\e$ with such property.
\begin{definition}
Let $W$ be compact and such that $W\pitchfork J^r_z(D,\R^k)$ for all $z\in D$, so that $W_z=W\cap J^r(D,\R^k)$ is a closed submanifold.
For any $f\in C^{r+1}(D,\R^k)$ let $\pi_W(f,z)=\{\t\in W_z\colon \mathrm{dist}(j^rf(z),W)=|j^rf(z)-\t|\}$ and define
\be
\hat{\delta}_{W}(f,z):=\left(\mathrm{dist}(j^rf(z),W_z)^2+\min_{\t\in \pi_W(f,z)}\mathrm{dist}\left(d_z(j^rf),C(T_\t W)\right)^2 \right)^\frac12;
\ee
\be
\hat{\delta}_{W}(f,D):=\min\left\{\inf_{z\in D}\hat{\delta}_{W}(f,z);\inf_{z\in\de D}\mathrm{dist}(j^rf,W_z);\e_0(W)\right\};
\ee
\end{definition}
\begin{remark}
Notice that $\hat{\delta}_W(f,D)>0$ if and only if $j^rf$ is transverse to $W$ at any point $z\in \text{int}(D)$ and moreover $j^rf^{-1}(W)\cap \de D=\emptyset$.
\end{remark}
\begin{lemma}\label{lem:tangentubordo}
Let $W\subset D\times \R^p$ be a compact smooth submanifold and assume that $W\pitchfork \{z\}\times\R^p$ for all $z\in D$. Let $(z,\xi)\in\de B_\e(W)$, with $0<\e<\e_0(W)$ and let $(z,w)\in W$ be such that $|\xi-w|=\e$. Then
\be
T_{(z,w)} W\subset T_{(z,\xi)} \de B_\e(W).
\ee
\end{lemma}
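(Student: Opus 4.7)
My plan is to exploit the tubular neighborhood structure of $B_{\e_0}(W)$: the smooth function $\rho^2(z',\xi') := \mathrm{dist}((z',\xi'), W_{z'})^2$ cuts out $\de B_\e(W)$ as the regular level set $\{\rho^2 = \e^2\}$, so I will verify the inclusion of tangent spaces by showing $d\rho^2|_{(z,\xi)} \cdot v = 0$ for every $v \in T_{(z,w)} W$. First, I would observe that since $w$ realizes the minimum of $w' \mapsto |\xi - w'|$ on the smooth submanifold $W_z$, the first-order optimality condition yields the crucial orthogonality $\xi - w \perp T_w W_z$ in $\R^p$.

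Next, using the hypothesis $W \pitchfork \{z\} \times \R^p$, I would note that the projection $W \to D$ is a submersion at $(z,w)$, so $W$ admits a local parameterization $\psi(z', y) = (z', \phi(z', y))$ with $\psi(z,0) = (z,w)$, such that $y \mapsto (z', \phi(z', y))$ parameterizes a neighborhood of $\phi(z',0)$ in $W_{z'}$ for each fixed $z'$. In these coordinates every $v \in T_{(z,w)} W$ has the form $v = (a,\, v_p)$ with $v_p = \partial_z \phi(z,0)\,a + \partial_y \phi(z,0)\,b$. I would then test tangency of $v$ by means of the curve $c(t) = (z + ta,\, \xi + t v_p)$, which lies in $B_{\e_0}(W)$ for $t$ small and satisfies $c(0) = (z,\xi)$ and $c'(0) = v$.

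The heart of the argument is an upper bound for $\rho^2(c(t))$ obtained by plugging in the candidate point $\psi(z+ta, 0) \in W_{z+ta}$ and using the Taylor expansion $\phi(z+ta, 0) = w + t\,\partial_z\phi(z,0)\,a + O(t^2)$:
\[
\rho^2(c(t)) \le |\xi + t v_p - \phi(z+ta,0)|^2 = |(\xi - w) + t\,\partial_y\phi(z,0)\,b + O(t^2)|^2 = \e^2 + O(t^2),
\]
where the linear term $2t\langle \xi - w,\, \partial_y\phi(z,0)\,b\rangle$ vanishes because $\partial_y\phi(z,0)\,b \in T_w W_z$ and $\xi - w \perp T_w W_z$. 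Applying the same estimate with $(a,b)$ replaced by $(-a,-b)$ makes the bound two-sided; since $\rho^2 \circ c$ is smooth with value $\e^2$ at $t=0$, this forces $(\rho^2 \circ c)'(0) = 0$, i.e., $v \in T_{(z,\xi)} \de B_\e(W)$.

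The step I expect to be the main obstacle is the geometric alignment in the distance expansion: one has to arrange the test curve $c(t)$ and the test point $\psi(z+ta, 0) \in W_{z+ta}$ so that their first-order deviation lies entirely inside $T_w W_z$ rather than in its normal direction, which is exactly what allows the orthogonality from the first step to annihilate the linear term. Once this alignment is in place, the rest reduces to routine first-order calculus together with the smoothness of $\rho^2$ on the tubular neighborhood.
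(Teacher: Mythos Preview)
Your proof is correct, but it proceeds along a somewhat different route than the paper's. Both arguments start from the same key observation---the first-order optimality condition $\xi - w \perp T_wW_z$---but they use it differently.

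The paper works entirely with curves and never writes down a defining function. It first treats the ``vertical'' piece $T_{(z,w)}W\cap(\{0\}\times\R^p)$ by differentiating the inequality $|\xi(t)-w(s)|^2\ge\e^2$ in the two curve parameters separately, obtaining $\{0\}\times(\xi-w)^\perp$ as the common orthogonal complement. For a general $(\dot z,\dot w)\in T_{(z,w)}W$, it then uses the tubular-neighborhood structure to \emph{lift} the base curve $z(t)$ to a curve $(z(t),\xi(t))\in\partial B_\e(W)$, differentiates $|\xi(t)-w(t)|^2\ge\e^2$ once, and decomposes $(\dot z,\dot w)=(\dot z,\dot\xi)-(0,\dot\xi-\dot w)$ as a sum of two vectors already known to be tangent to $\partial B_\e(W)$.

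Your approach instead treats $\partial B_\e(W)$ as a regular level set of the fiberwise squared distance $\rho^2$, writes down a local graph parameterization $\psi(z',y)=(z',\phi(z',y))$ of $W$, and shows $d\rho^2\cdot v=0$ by moving along a straight-line curve $c(t)$ in the ambient space while comparing against the test point $\phi(z+ta,0)\in W_{z+ta}$. The clever alignment that makes the first-order term cancel is exactly the place where you use the orthogonality; this replaces the paper's lifting step. One small comment: your ``two-sided'' sentence is slightly redundant, since your upper bound $\rho^2(c(t))\le\e^2+O(t^2)$ already holds for all small $t$ of either sign, which by itself forces $(\rho^2\circ c)'(0)=0$; but the argument is sound either way.

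What each approach buys: the paper's curve-lifting argument avoids choosing local coordinates on $W$ and keeps everything in terms of intrinsic curves, at the cost of invoking the tubular-neighborhood fibration to produce the lift $(z(t),\xi(t))$. Your defining-function approach is more systematic---one just checks that a single differential annihilates $T_{(z,w)}W$---but requires setting up the parameterization $\psi$ and justifying that $\rho^2$ is smooth with $\e^2$ a regular value (which you correctly attribute to the tubular-neighborhood structure).
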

\begin{proof}
Let $\xi(t),w(s)\in\R^p$ be two smooth curves such that $\xi(0)=\xi$, $w(0)=w$, $(z,w(s))\in W$ and $(z,\xi(t))\in \de B_\e(W)$. Then for all $s,t$ we have
\be 
|\xi(t)-w(s)|\ge \mathrm{dist}\left((z,\xi(t)),W_z\right)=\e=|\xi(0)-w(0)|,
\ee
so that differentiating the function $(s,t)\mapsto |\xi(t)-w(s)|^2$ at the point $(0,0)$, we get that $\langle(\xi-w),\dot{w}\rangle=0$ and $\langle(\xi-w),\dot{\xi}\rangle=0$. The arbitrariness in the choice of the two curves implies that
\be 
T_{(z,\xi)}W\cap (\{0\}\times\R^p)\subset \{0\}\times (\xi-w)^\perp =T_{(z,\xi)}\de B_\e(W)\cap (\{0\}\times\R^p);
\ee
here on the right we have an equality instead than an inclusion for dimensional reasons.

Now let $(z(t),w(t))\in W$ be any smooth curve in $W$ such that $z(0),w(0)=z,w$. 
Given the tubular neighborhood structure of $\de B_\e(W)$ (see the discussion after equation \eqref{eq:balldabliu}), there also exists a smooth curve $(z(t),\xi(t))\in \de B_\e(W)$ with $\xi(0)=\xi$ and of course $(\dot{z},\dot{\xi})\in T_{(z,\xi)}\de B_\e(W)$. Thus
\be 
|\xi(t)-w(t)|\ge \mathrm{dist}\left(\xi(t),W_{z(t)}\right)=\e=|\xi(0)-w(0)|,
\ee
so that differentiating at $t=0$ we get that $\dot{\xi}-\dot{w}\in (w-\xi)^\perp$ meaning that the vector $(0,\dot{\xi}-\dot{w})$ belongs to the tangent space $T_{(z,\xi)}\de B_\e(W)$. We conclude that $(\dot{z},\dot{w})\in T_{(z,\xi)}\de B_\e(W)$. Since the latter construction can be repeated for any tangent vector $(\dot{z},\dot{w})\in T_{(z,w)}W$, this concludes the proof.
\end{proof}
\begin{proof}[Proof of Theorem \ref{thm:introsingsemiwitdash}]
Consider the set $B_\e(W)$ defined as in \eqref{eq:balldabliu} and let us prove that
\be\label{eq:claimBWtransv}
j^rf\pitchfork \de B_\e(W), \qquad {\forall \e\in(0,\hat{\delta}_{W}(f,D))}.
\ee
By contradiction assume that there is a point $z\in D$ such that $j^rf(z)\in\de B_\e(W)$, but $j^rf$ is not transverse to $\de B_\e(W)$ at $z$ i.e.
\be\label{eq:nontransvjetto}
\mathrm{im}\left(d_z(j^rf)\right)\subset T_{j^rf(z)} \de B_\e(W).
\ee
Since $\e<\e_0(W)$, there exists a (unique) point $\t\in W_z$ such that $\|j^rf-\t\|=\e$ i.e. $\{\t\}=\pi_W(j^rf(z))$. By Lemma \ref{lem:tangentubordo} we have that $T_{j^rf(z)}\de B_\e(W)\supset T_\t W$  and this, together with \eqref{eq:nontransvjetto}, implies that the linear map $d_z(j^rf)\colon \R^n\to \R^{n+p}$ belongs to the critical set $C(T_\t W)$. This leads to a contradiction:
\be 
\hat{\delta}_{W}(f,z)\le \mathrm{dist}\left(j^rf(z),W_z\right)=\e< \hat{\delta}_{W}(f,z),
\ee
proving \eqref{eq:claimBWtransv}.

Fix $0<\e<\hat{\delta}_W(f,D)$. From the transversality condition \eqref{eq:claimBWtransv} it follows that the set $E=j^rf^{-1}(B_\e(W))$ is a smooth tubular neighborhood of the submanifold $Z=j^rf^{-1}(W)$ in the smooth manifold $\text{int}(D)$. The fact that $E$ is contained in the interior of $D$ is due to the inequality
\be
\e<\hat{\delta}_{W}(f,D)\le \inf_{z\in \de D}\mathrm{dist}(j^rf,W_z).
\ee
Now let $g\in\R[z_1,\dots,z_n]^k$ be a polynomial such that $\nurm r{f-g}k \le \e$. Then $j^rg^{-1}(W)\subset E$ indeed if $j^rg(z)\in W$, then
\be
\mathrm{dist}(j^rf(z),W_z)\le\mathrm{dist}(j^rf(z),j^rg(z))+ \mathrm{dist}(j^rg(z),W_z)\le \e+0,
\ee
hence $j^rf(z)\in B_\e(W)$. The same is true for all maps $g_t=tg+(1-t)f$, therefore the map $g$ belongs to the set $\mathcal{U}_{E,{j^rf}}\subset C^0(D,\R^{n+p})$ defined in the statement of Theorem \ref{thm:semiconti} and thus
\be\label{eq:semising}
b(Z)\le b\left(j^rg^{-1}(W)\right).
\ee
(Since $j^rg$ is a polynomial, we can use the singular homology Betti numbers.)
By Theorem \ref{thm:bblapprox} we can assume that each component of $g$ has degree smaller than an integer number $d\le a_{r+1}(D){\nurm {r+1}fk}\e^{-1}$ or, if the latter is less than $r$, $d\le r$ so that since $W$ is semialgebraic and the map $j^rg$ is polynomial there exists a constant $a(W)>0$ depending only on $W$ such that
\be\label{eq:bettideg}
b\left(j^rg^{-1}(W)\right)\le a(W)d^n\le a(W)\max\left\{r,a_{r+1}(D){\nurm {r+1}fk}\e^{-1}\right\}^n.
\ee
Combining \eqref{eq:semising} and \eqref{eq:bettideg} and from the arbitrariness of $\e$, we obtain the thesis. 
\end{proof}
\section{What is the degree of a smooth hypersurface?}
In this section we consider the case in which $j^rf^{-1}(W)$ is just the zero set of the  function $f\colon D\to \R$. In this case $W=W_{0}$ is the subset of the space  $J^0(D,R)\cong D\times \R$ corresponding to $W_{0}=D\times\{0\}$. 

In this particular case we have that $\delta_{W_0}(f,D)=\mathrm{dist}(f,\Delta_{W_0})$ (see Proposition \ref{prop:nextprop} below) and for the rest of the current section we will denote this quantity by $\delta(f,D)$. 

Notice that under the identification $J^1(D,\R)\cong D\times \R\times \R^n$, we have that $\Sigma_{W_0,z}=\{z\}\times \{0\}\times\{0\}$ for every $z\in \mathrm{int}(D)$, while $\Sigma_{{W_0},z}=\{z\}\times \{0\}\times (T_z\de D)^\perp$ if $z\in\de D$, therefore the distance of $f$ from the discriminant $W_0$ can be expressed by the following formula.
\be\label{eq:deltahyp}
\delta(f,D)=\min\left\{\inf_{z\in D}\left(|f(z)|^2+|\nabla f(z)|^2\right)^\frac12,\inf_{z\in \de D} \left(|f(z)|^2+|\nabla (f|_{\de D})(z)|^2\right)^\frac12\right\}.
 \ee
\begin{proposition}\label{prop:nextprop}
$\delta_{W_0}(f,D)=\mathrm{dist}_{C^1}(f,\Delta_{W_0}).$
\end{proposition}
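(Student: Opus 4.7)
The inequality $\delta_{W_0}(f,D)\le \mathrm{dist}_{C^1}(f,\Delta_{W_0})$ is already a consequence of Proposition \ref{propo:delta}, so the content of the statement is that Whitney's extension step used in that proof can be sharpened so that the constant $C$ equals $1$ in this particular case. My plan is therefore to prove the reverse inequality constructively: I will exhibit an explicit $g\in\Delta_{W_0}$ with $\|f-g\|_{C^1(D,\R)}\le \delta_{W_0}(f,D)$.

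First, by compactness of $D$ and by continuity of $z\mapsto \mathrm{dist}(j^1f(z),\Sigma_{W_0,z})$ on each of $\mathrm{int}(D)$ and $\de D$ (via the explicit description of $\Sigma_{W_0,z}$ recalled just before the statement, which gives formula \eqref{eq:deltahyp}), the infimum defining $\delta_{W_0}(f,D)$ is attained at some $z_0\in D$. Let $u\in\R^n$ denote the relevant gradient: the full gradient $\nabla f(z_0)$ if $z_0\in\mathrm{int}(D)$, or the tangential gradient $\nabla(f|_{\de D})(z_0)$ if $z_0\in\de D$. Set $a:=f(z_0)$ and $b:=|u|$, so that $\sqrt{a^2+b^2}=\delta_{W_0}(f,D)$.

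The heart of the argument is the following explicit trigonometric perturbation. Pick a unit vector $v\in\R^n$ with $bv=u$ (any unit vector if $b=0$; note that $v\in T_{z_0}\de D$ in the boundary case, since $u$ is tangential by construction), and define
\be
h(z):=a\cos\langle v,z-z_0\rangle+b\sin\langle v,z-z_0\rangle.
\ee
Using the identity $\phi(t)^2+\phi'(t)^2\equiv a^2+b^2$ for $\phi(t)=a\cos t+b\sin t$, together with the fact that $h$ varies only along the single direction $v$, one obtains the \emph{pointwise} equality $|h(z)|^2+|\nabla h(z)|^2=a^2+b^2$ for every $z\in\R^n$. In view of the formula \eqref{eq:norm1} for the $C^1$ norm, this gives $\|h\|_{C^1(D,\R)}=\sqrt{a^2+b^2}=\delta_{W_0}(f,D)$.

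Finally, set $g:=f-h$. By construction $g(z_0)=0$, while $\nabla g(z_0)=\nabla f(z_0)-bv$ equals either $0$ (interior case) or the component of $\nabla f(z_0)$ normal to $\de D$ (boundary case); in both cases $j^1g(z_0)\in\Sigma_{W_0,z_0}$, hence $g\in\Delta_{W_0}$, and $\mathrm{dist}_{C^1}(f,\Delta_{W_0})\le\|f-g\|_{C^1}=\delta_{W_0}(f,D)$. The only mildly delicate bookkeeping is in the boundary case, where one must observe that $v$ is automatically tangential to $\de D$ so that $(j^0g)|_{\de D}$ genuinely fails to be transverse to $W_0$ at $z_0$; I expect this to be the only non-mechanical point. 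The key insight is that replacing the generic Whitney extension by a single trigonometric function oriented along $v$ realizes the infimum of the $C^1$ norm \emph{exactly}, which is what forces $C=1$ here (compare with Example \ref{ex:CgeOne}, where the higher-order jet makes such a tight extension impossible).
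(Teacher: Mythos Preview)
Your proof is correct and follows the same overall strategy as the paper: exhibit an explicit perturbation $h$ with the right $1$-jet at the minimizing point $z_0$ and with $C^1$ norm equal to (or arbitrarily close to) $\delta_{W_0}(f,D)$, so that $f-h\in\Delta_{W_0}$ witnesses the reverse inequality.

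The one genuine difference is the choice of $h$. The paper takes $h(z)=r+\rho_\e(v^T(z-x))$ with $\rho_\e$ a bounded odd function satisfying $\rho_\e'(0)=1=\max|\rho_\e'|$; this gives $\|h\|_{C^1}\le \sqrt{(|r|+\e)^2+|v|^2}$ and then lets $\e\to 0$. Your trigonometric choice $h(z)=a\cos\langle v,z-z_0\rangle+b\sin\langle v,z-z_0\rangle$ exploits the identity $\phi^2+(\phi')^2\equiv a^2+b^2$ to hit $\|h\|_{C^1}=\sqrt{a^2+b^2}$ on the nose, so no limiting argument is needed. This is a neat simplification; the paper's saturating construction, on the other hand, has the mild advantage that $h$ is bounded and eventually constant, which can be convenient in other contexts. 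Your treatment of the boundary case is also slightly more explicit than the paper's: you note that when $z_0\in\de D$ the relevant gradient is automatically tangential (since $|\nabla(f|_{\de D})(z_0)|\le |\nabla f(z_0)|$, the minimum at a boundary point is always realized by the second infimum in \eqref{eq:deltahyp}), which makes the verification $g\in\Delta_{W_0}$ transparent.
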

\begin{proof}
 Let us make the identification $J^1(D,\R)\cong D\times \R\times \R^n$.
 By Definition, there exists a point $x\in D$ and a jet $j_xg=(x,r,v)\in J^1_x(D,\R)$ such that $j^1(f+g)(x)\in \Sigma_{W_{0},x}$ and
 \be
 \delta_{W_0}(f,D)=\|(r,v)\|.
 \ee
Fix $\e>0$ and let $\rho_\e\colon \R\to [-\e,\e]$ be a smooth function such that $\max_{t\in\R}|\rho_\e'(t)|=\rho_\e'(0)=1$. For instance $\rho$ can be constructed as follows:
\be \rho_\e(t)=\begin{cases}
                                   t & \text{if $ -\frac{\e}{2}\le t\leq \frac{\e}{2}$} \\
            
            \textrm{is increasing and convex} & \text{if $t\le -\frac{\e}{2}$} \\                       \textrm{is increasing and concave} & \text{if $\frac{\e}{2}\leq t$} \\
 \e & \text{if $|t|\geq 2\e$}.
  \end{cases}\ee
 Define now a new function $h\in C^1(D,\R)$ such that $h(z)=r+\rho_\e(v^T(z-x))$.
 Then the function $f+h$ belongs to the discriminant set $D_{W_0}$ since $j^1(f+h)(x)=j^1(f+g)(x)\in \Sigma_{W_{0},x}$, therefore
 \be
 \begin{aligned}
 \mathrm{dist}_{C^1}(f,\Delta_{W_0})&\le \|h\|_{C^1(D,\R)}\\
 &\le \sup_{z\in D}\left(|r+\rho_\e(z)|^2+|\rho_\e'(v^T(z-x))v|^2\right)^{\frac12}\\
 &\le \left(|r+\e|^2+|v|^2\right)^\frac12.
 \end{aligned}
 \ee
 Thus, from the arbitraryness of $\e>0$, we conclude that
 \be
 \mathrm{dist}_{C^1}(f,\Delta_{W_0})\le (r^2+|v|^2)^\frac12=\delta_{W_0}(f,D).
 \ee
 Combining this with the general inequality \eqref{eq:constant} we obtain the thesis.

\end{proof}
\subsection{Proof of Theorem \ref{thm:condeq}}
Denote by $\rho=\rho(Z).$ We consider the function $\dZ:\R^n\to \R$ defined to be the \emph{signed} distance from $Z$. By \cite[Remark 2]{Foote}, if $Z$ is of class $C^1$ and with positive reach $\rho(Z)>0$, the function $\dZ$ is $C^1$ on the set $\{\dZ<\rho(Z)\}.$ 

We need to consider also an auxiliary function $g:\R\to \R$ of class $C^2$ such that $g(t)=-g(-t)$ for all $t\in \R$ and:
\be g(t)=\begin{cases}
                                   t & \text{if $0\leq t\leq \frac{1}{2}$} \\
                                   \textrm{is increasing and concave} & \text{if $\frac{1}{2}\leq t\leq \frac{7}{8}$} \\
 \frac{3}{4} & \text{if $t\geq \frac{7}{8}$}
  \end{cases}\ee
  The existence of such a function is elementary; it can be taken, for instance, to be piecewise polynomial. Denoting by $g_\rho$ the function $t\mapsto \rho\cdot  g(t/\rho)$, we set:
 \be \label{eq:dfnf}f(x)=g_\rho(\dZ(x)).\ee
 
 Let us start by estimating $\delta(f, D)$, for a disk $D_R$ with $Z\subset \mathrm{int}D_{R-\rho}$. Notice that this condition implies that, by construction, the function $f\equiv \frac{3}{4}\rho$ on $\R^n\backslash \mathrm{int}(D)$ and in particular:
 \be\label{eq:d} \delta(f, D)=\min\left\{\inf_{z\in D}\left(|f(z)|^2+\|\nabla f (z)\|^2\right)^{1/2}, \frac{3}{4}\rho\right\}.\ee
 Observe now that for every $x$ such that $t=\dZ(x)<\rho$ we have:
 \begin{align} |f(x)|^2+\|\nabla f(x)\|^2&=|g_\rho(\dZ(x))|^2+|g'_{\rho}(\dZ(x))|^2\cdot\|\nabla \dZ(x)\|^2\\
 &=|g_\rho(\dZ(x))|^2+|g'_{\rho}(\dZ(x))|^2\\
 \label{eq:red}&=\rho^2|g(t/\rho)|^2+|g'(t/\rho)|^2,
 \end{align}
 where in the second line we have used the fact that $\|\nabla\dZ\|\equiv 1.$
 In particular, partitioning the domain of definition of the function $g_\rho$, it follows that:
 \begin{align} \inf_{z\in D}|f(z)|^2+\|\nabla f (z)\|^2&=\inf\left\{1+\frac{\rho^2}{4}, \rho^2(3/4)^2, \inf_{\frac{1}{2}\rho\leq t\leq \frac{7}{8}t}\rho^2|g(t/\rho)|^2+|g'(t/\rho)|^2\right\}\\
 &\geq \inf\left\{1+\frac{\rho^2}{4}, \rho^2(3/4)^2, \inf_{\frac{1}{2}\rho\leq t\leq \frac{7}{8}t}\rho^2|g(t/\rho)|^2+ \inf_{\frac{1}{2}\rho\leq t\leq \frac{7}{8}t}|g'(t/\rho)|^2\right\}\\
 &=\inf\left\{1+\frac{\rho^2}{4}, \rho^2(3/4)^2, \frac{\rho^2}{4}\right\}\\
 &=\frac{\rho^2}{4}.
 \end{align}
 Together with \eqref{eq:d}, this gives:
 \be\label{eq:db} \delta(f, D)\geq \frac{\rho}{2}.\ee
 
 Let us now estimate $\nu_1(f , D)$. Again partitioning the domain and using \eqref{eq:red} and the fact that $|g'(t)|\leq 1$ for all $t$, we immediately get:
 \be\label{eq:nb} \|f\|_{C^1(D, \R)}\leq 1+\frac{3}{4}\rho.
 \ee
 Combining \eqref{eq:db} with \eqref{eq:nb} gives \eqref{eq:k1}.
 
 In order to get \eqref{eq:k2}, we will work with the further assumption that $Z$ is of class $C^2.$ Under this assumption the function $\dZ$ is $C^2$ on $\{\dZ<\rho(Z)\}$ and, using \eqref{eq:dfnf} we get:
 \be\label{eq:second}\frac{\partial^2f}{\partial x_i\partial x_j}(x)=\begin{cases}
                                   0 & \text{if $\dZ(x)>\frac{7}{8}\rho$} \\
                                   g_\rho''(\dZ(x))\partial_i \dZ(x)\partial_i \dZ(x)+g_{\rho}'(\dZ(x))\partial^2_{ij}\dZ(x)& \text{otherwise}   \end{cases}.
 \ee
Since the function $g$ is fixed with $\|g'\|\leq 1$ and $\|g''\|\leq a_1$ (for some constant $a_1>0$), we have
\be \|g_\rho'\|=\|g'\|\leq 1\quad \textrm{and}\quad \|g_{\rho}''\|=\frac{1}{\rho}\|g''\|\leq \frac{a_1}{\rho}.\ee
In particular, in order to estimate the absolute value of \eqref{eq:second}, we need an estimate for the Hessian of $\dZ$.
We use now the following fact \cite[Lemma 14.17]{GT}: given $x\in \{\dZ<\rho\}$ with $\dZ(x)=t$, the Hessian of $\dZ$ at $x$ has eigenvalues:
\be \beta_1(t)=\frac{-\lambda_1}{1-\lambda_1t},\ldots, \beta_{n-1}(t)=\frac{-\lambda_{n-1}}{1-\lambda_{n-1}t},\quad\beta_n(t)=0\ee
where $\lambda_1, \ldots, \lambda_{n-1}$ are the eigenvalues of the Weingarten map of the hypersurface $Z$ at $z(x)=\mathrm{argmin}_{z\in Z}\mathrm{dist}(z, x),$ i.e. the principal curvatures of $Z$ in $\R^n.$ 

The modulus of each of these eigenvalues can be estimated by 
\be |\lambda_i|\leq \frac{1}{\rho(Z)},\ee
and each ratio $|1-\lambda_it|^{-1}$ is smaller than $1$ if $\lambda_1\leq0$ and, if $\lambda_i> 0$, smaller than its value at $t=\frac{7}{8}\rho$ (the extremum of the interval where we have to estimate the function), which is:
\be \frac{1}{|1-\lambda_it|}\leq \frac{1}{\left|1-\lambda_i\frac{7}{8}\rho\right|}\leq 8,\ee
where we have used $0<\lambda_i\leq \frac{1}{\rho}.$
Going back to \eqref{eq:second}, we have
\be\left|\frac{\partial^2f}{\partial x_i\partial x_j}(x)\right|\leq |g_\rho''(t)|+|g_\rho'(t)|\cdot |\partial^2_{ij}\dZ(x)|.
\ee
Observe now that for the construction of the function $g$ we need: 
\be g''(1/2)=g''(7/8)=0\quad \textrm{and}\quad \int_{1/2}^{7/8}g''(s)ds=-1.\ee
Since $7/8-1/2=3/8>1/3$, we can choose the function $g$ such that $|g''|\leq 3,$ which implies $|g_\rho''|\leq 3/\rho.$

From this we immediately deduce that
\begin{align} \|f\|_{C^2(D, \R)}&\leq\|f\|_{C^1(D, \R)}+\left(\sum_{i,j}|\partial_{ij}^2f|^2\right)^{1/2}\\
&\leq  1+\frac{3}{4}\rho+\left(\sum_{i,j}(|g_\rho''|+|\partial_{ij}^2\dZ|)^2\right)^{1/2}\\
&\leq  1+\frac{3}{4}\rho+\left(\sum_{i,j}(3/\rho+|\partial_{ij}^2\dZ|)^2\right)^{1/2}\\
&\leq 1+\frac{3}{4}\rho+\left(\sum_{i,j}2((3/\rho)^2+|\partial_{ij}^2\dZ|^2)\right)^{1/2}\\
&\leq 1+\frac{3}{4}\rho+\sqrt{2}\left(n^2 \frac{9}{\rho^2}+\sum_{i,j}|\partial_{ij}^2\dZ|^2\right)^{1/2}\\
&= 1+\frac{3}{4}\rho+\sqrt{2}\left(n^2 \frac{9}{\rho^2}+\|\partial_{ij}^2\dZ\|_F^2\right)^{1/2}\\
&= 1+\frac{3}{4}\rho+\sqrt{2}\left(n^2 \frac{9}{\rho^2}+\sum_{i=1}^n\lambda_i(\partial_{ij}^2\dZ)^2\right)^{1/2}\\
&\leq 1+\frac{3}{4}\rho+\sqrt{2}\left(n^2 \frac{9}{\rho^2}+\frac{n}{\rho^2}\right)^{1/2}\\
&\leq1+\frac{3}{4}\rho+\frac{5n}{\rho}.
\end{align}
 Together with \eqref{eq:db} this gives \eqref{eq:k2}.

\subsection{The isotopy degree of a smooth hypersurface}\label{sec:isodeg}
\begin{definition}Let $Z\subset D$ be a smooth, compact submanifold without boundary. We define the number $\di(Z, D)$, the \emph{isotopy degree} of $Z$ in $D$, as the minimum degree of a polynomial $p$ such $(D, Z)\sim (D, Z(p))$.
\end{definition}
\begin{remark}\label{remark:measure}We observe that the isotopy degree of a hypersurface defined by a polynomial of degree $d$ can be smaller than $d$. In fact, following \cite{DiattaLerario}, one can prove that \emph{for most} polynomials $p$ of degree $d$ we have
\be \di(Z(p))=O\left(\sqrt {d \log d}\right),\ee in the following sense.
First, we can put a gaussian measure $\mu$ on the space of polynomials of degree $d$ by defining for every open set $A$ in $\R[x_1, \ldots, x_n]$ its measure by
\be \mu(A)= \frac{\int_A e^{-\|p\|^2_{\mathrm{FS}}}\mathrm{d}\lambda}{\int_{\R[x_1, \ldots, x_n]} e^{-\|p\|^2_{\mathrm{FS}}}\mathrm{d}\lambda},
\ee
where $\lambda $ is the Lebesgue measure on $\R[x_1, \ldots, x_n]\simeq \R^N$ (the identification is made through the list of coefficients) and $\|\cdot\|_{\mathrm{FS}}$ denotes the Fubini-Study norm: this norm is induced by a scalar product for which \be\left\{\sqrt{\frac{d!}{\alpha_1!\cdots \alpha_n!(d-|\alpha|)!}}\cdot x_1^{\alpha_1}\cdots x_n^{\alpha_n}\right\}_{|\alpha|\leq d}\ee form an orthonormal basis. 
Let now $p\in \R[x_1, \ldots, x_n]$ be a polynomial of degree $d$ and  denote by $x=(x_0,x_1, \ldots, x_n)$ and by $h\in \R[x_0, \ldots, x_n]$ the homogenization of $p$, the new variable being $x_0$. Then we can decompose $h$ into its spherical harmonics part 
\be h=h_d+\|x\|^2\cdot h_{d-2}+\|x\|^4\cdot h_{d-4}+\cdots.\ee
Denote by $\tilde{h}$ the projection of $h$ on the space of harmonics of degree at most $\sqrt{ b d \log d}$, where $b>0$ is a positive constant (defined in \cite[Proposition 6]{DiattaLerario}):
\begin{align} \tilde{h}(x_0, \ldots, x_n)&=\sum_{\ell\leq \sqrt{b d \log d}}\|x\|^{d-\ell}h_\ell(x_0, \ldots, x_n)\\
&=\|x\|^{d-\lfloor\sqrt{bd\log d}\rfloor}\sum_{\ell\leq \sqrt{b d \log d}}\|x\|^{\lfloor\sqrt{bd\log d}\rfloor-\ell}h_\ell(x_0, \ldots, x_n)\\
&=\|x\|^{d-\lfloor\sqrt{bd\log d}\rfloor}\cdot q(x_0, \ldots, x_n).\end{align}
Here $q$ is a homogeneous polynomial of degree bounded by $O(\sqrt{d\log d}).$
Then, by \cite[Theorem 7]{DiattaLerario}, there is a set $S_d\subset \R[x_0, \ldots, x_n]$ of homogeneous polynomials of degree $d$ such that $\mu(S_d)\to 1$ as $d\to \infty$ (i.e. with almost full measure) with the property that for every $h\in S_d$
\be (S^n, Z(h))\sim (S^n, Z(\tilde{h}))=(S^n, Z(q)).\ee
When we set $x_0=1$ in $q$ we obtain a polynomial $\tilde{p}$ on $\R^n\simeq \{x_0=1\}$, whose zero set is isotopic to $Z(p)$ and with degree $O(\sqrt{d \log d})$. 
\end{remark}

%
\subsection{Proof of Theorem \ref{thm:dash}}
We will first prove the following preliminary estimate.

For $d, n>0$ denote by $\mathcal{Z}_{d, n}$ the set of rigid isotopy classes of pairs $(\R^n, Z(p))$ with $Z(p)\subset \R^n$ regular zero set of a polynomial $p\in \R[x_1, \ldots, x_n]$ of degree at most $d$. We claim that:
\be\label{eq:claim} \#\mathcal{Z}_{d,n}\leq 2 T (2T-1)^{\ell-1},\ee
where $T=(n+1)(d-1)^n$ and $\ell={n+d+1\choose n+1}.$

The cardinality of $\mathcal{Z}_{d,n}$ is bounded by the number of connected components of the complement of a discriminant in the space of polynomials. More precisely, denoting by $\Delta_{d, n}\subset \R[x_1, \ldots, x_n]\simeq \R^\ell$ the discriminant for $Z(p)$ being nonsingular, the number of rigid isotopy classes of $(\R^n, Z(p))$ is bounded by $b_0(\R^\ell\backslash \Delta_{d,n}).$ Denoting by $\widehat{\Delta}_{d, n}\subset S^N$ the one point compactification of $\Delta_{d,n}$, we have (using Alexander duality):
\begin{align} \#\mathcal{Z}_{d,n}&\leq b_0(\R^\ell\backslash \Delta_{d,n})\\&=b_0(S^\ell\backslash \widehat\Delta_{d,n})\\
&=\tilde{b}_0(S^\ell\backslash \widehat\Delta_{d,n})+1\\
&\leq \tilde{b}(S^\ell\backslash \widehat\Delta_{d,n})+1\\
&=\tilde{b}(\hat\Delta_{d,n})+1\\
&=b(\widehat\Delta_{d,n}).
\end{align}
In particular, in order to estimate $\#\mathcal{Z}_{d,n}$ it is enough to estimate the total Betti number of $\widehat\Delta_{d,n}.$ To this end we will use a Mayer-Vietoris argument and write:
\be \widehat{\Delta}_{d,n}=\Delta_{d,n}\cup \left(U_{\infty}\cap \widehat{\Delta}_{d,n}\right),
\ee
where $U_{\infty}\subset S^\ell$ is an open ball centered at the point at infinity in $ S^{\ell}$. Observe that $U_\infty\cap \widehat{\Delta}_{d,n}$ is contractible and 
\be \Delta_{d,n}\cap\left(U_{\infty}\cap \widehat{\Delta}_{d,n}\right)\sim Z(\mathrm{disc}_{d,n}, \|\cdot\|^2=R), \ee
meaning that the two spaces are homotopy equivalent; here $\mathrm{disc}_{d,n}$ is a polynomial on $\R^\ell$ whose zero set is $\Delta_{d,n}$ and $ \|\cdot\|^2=R$ defines a sphere on the same space (the boundary of $U_\infty$, viewed as a subset of $\R^\ell$). In particular both $\Delta_{d,n}$ and $\Delta_{d,n}\cap\left(U_{\infty}\cap \widehat{\Delta}_{d,n}\right)$ are described in $\R^N$ by polynomial equations of degree bounded by \cite[Proposition 7.4]{3264}: \be\deg(\mathrm{disc}_{d,n})=(n+1)(d-1)^n=T.\ee
Using Mayer-Vietoris and \cite{milnor}, it follows that:
\begin{align} b(\widehat{\Delta}_{d,n})&\leq b(\Delta_{d,n})+b\left(U_{\infty}\cap \widehat{\Delta}_{d,n}\right)+b\left(\Delta_{d,n}\cap\left(U_{\infty}\cap \widehat{\Delta}_{d,n}\right)\right)\\
&\leq T(2T-1)^{\ell-1}+1+T(2T-1)^{\ell-1}\\
&\leq 2T(2T-1)^{\ell-1},
\end{align}
which proves the claim \eqref{eq:claim}.

 For $d, n>0$ denote now by $\mathcal{C}_{d, n}$ the set of rigid isotopy classes of pairs $(D, C)$ with $C\subset \mathrm{int}(D)\subset \R^n$ a smooth component of the zero set $Z(p)$ of a polynomial $p\in \R[x_1, \ldots, x_n]$ of degree at most $d$. We claim now that:
\be \label{eq:claim2}\#\mathcal{C}_{d,n}\leq d(2d-1)^{n-1} \#\mathcal{Z}_{d,n}.\ee
In order to see this, observe first that if $C$ is a smooth component of $Z(p)$ we can slightly perturb $p$ within the space of polynomials of the same degree, without changing the rigid isotopy class of $(D, C)$ and making the whole zero set $Z(p)$ smooth (i.e. we can assume $Z(p)$ is smooth already). Also, notice that the inclusion $(D, C)\hookrightarrow (\R^n, C)$ gives a correspondence of rigid isotopy classes and we can work in the whole $\R^n$ instead of the interior of the disk $D$ (being the two ambient spaces diffeomorphic).
To every rigid isotopy class of pairs $(\R^n, Z(p))$ corresponds at most $b_0(Z(p))$ rigid isotopy classes of pairs $(\R^n, C)$ with $C$ a connected component of $Z(p)$ and this, together with the fact that $b_0(Z(p))\leq d(2d-1)^{n-1}$, gives \eqref{eq:claim2}.

By part Theorem \ref{thm:mainapp} we know that, given a pair $(D, Z(f))$ with $f=0$ regular, $Z(f)\subset \mathrm{int}(D)$ and $\kappa_2(f, D)\leq \kappa$, there exists a polynomial $p$ with
\be d=\deg(p)\leq (1+2a_2(D)\cdot \kappa):=k\ee such that the pairs $(D, Z(f))$ and $(D, Z(p))$ are rigidly isotopic. In particular, $\#(\kappa, D)$ is bounded by $\#\mathcal{C}_{\lfloor 1+2a_2(D)\cdot \kappa \rfloor, n}$. Combining \eqref{eq:claim2} and \eqref{eq:claim} we get:
 \begin{align} \#(\kappa,D)&\leq d(2d-1)^{n-1}2T(2T-1)^{\ell-1}\\
 &\le 2^nd^n(n+1)(d-1)^n\left(2(n+1)d^n\right)^{\frac{(d+n+1)^{n+1}}{(n+1)!}}\\
 &\le c(n)^{\frac{(k+n+1)^{n+1}}{(n+1)!}} k^{2n+\frac{(k+n+1)^{n+1}}{(n+1)!}}=:(*),
\end{align}
so that as $k\to+\infty$ we get:
\be
\begin{aligned}
(*)&\le k^{3\frac{(2k)^{n+1}}{(n+1)!}}\\
&\le k^{c'(n)k^{n+1}}\\
&= (1+2a_2(D)\cdot \kappa)^{c'(n)(1+2a_2(D)\cdot \kappa)^{n+1}}\\
&\le (c''(n)\kappa)^{c''(n)\kappa^{n+1}}\\
&\le \kappa^{c'''(n)\kappa^{n+1}}.
\end{aligned}
\ee
By the continuity of the expression $(*)$ with respect to $\kappa$, we conclude that there is a constant $C_1(n)$ such that if $\kappa >C_1(n)$ then $\#(\kappa,D)\le \kappa^{C_2(n)\kappa^{n+1}}$, where $C_2(n)=c'''(n)$ is the constant found before. This concludes the proof.
\subsection{Proof of Theorem \ref{thm:semiwitdash}}
For what regards the first part of the theorem, we will show that
\be 
b(Z(f))\le \left(a_1(D)\kappa^{(1)}(f,D)+1\right)^n,
\ee
where $a_1(D)$ is the constant given by Theorem \ref{thm:bblapprox}. This implies \eqref{eq:bound3}, since $\kappa^{(1)}(f,D)\ge 1$, by definition.

Fix $\e>0$. First, observe that if $\e< \delta(f,D)$, then $f$ and $f|_{\de D}$ have no critical value in the interval $(-\e,\e)$, from which it follows that the set $E=f^{-1}(-\e,\e)$ is entirely contained in the interior of $D$. Moreover $E$ is a tubular neighborhood of $Z(f)$, since by Morse theory $f^{-1}(-\e,\e)$ is diffeomorphic to $Z(f)\times (-\e,\e)$. To see that $E$ satisfies the hypotheses of theorem \ref{thm:semiconti}, define $E_1=f^{-1}(-\e_1,\e_1)$, where $\e<\e_1<\delta(f,D)$ and notice that then $E_1\subset \text{int}(D)$ is a tubular neighborhood of $Z(f)$ such that $\overline{E}\subset E_1$.

Let $p\in\R[x_1,\dots,x_n]$ be 
a polynomial such that $\nurm 0{f-p}{}< \e$. By Theorem \ref{thm:bblapprox} we can assume that its degree $d$ satisfies the bound
\be\label{semi:eq:degreeeeee}
d-1\le a_1(D)\nurm{1}{f}{}\frac{1}{\e}
\ee
(take $p=p_d(f)$, where $d$ is the biggest positive integer such that \eqref{semi:eq:degreeeeee} is true).
Let $F_t=f+t(p-f)$ and call $F_t$ its restriction to $M=\text{int}(D)$. Consider the set $\mathcal{U}_E$ defined as in \eqref{semi:eq:U}, where $N=\R$ and $Y=\{0\}$. Suppose that $F_t\in\mathcal{U}_E$ for every $t\in [0,1]$, then we could apply Theorem \ref{thm:semiconti} to deduce that
\be 
b(Z(f))\le b(Z(p)) \le \left(\frac{a_1(D)\nurm1f{}}{\e}+1\right)^n
\ee
where the second inequality is due to the Milnor-Thom bound \cite{milnor} and to \eqref{semi:eq:degreeeeee}. The thesis now would follow by the arbitrariness of $\e$.

 Thus to conclude the proof it is sufficient to show that 
 $F_t^{-1}(0)\subset E$. To see this, let $x\in D$ such that $F_t(x)=0$ and observe that then
\be
|f(x)|=|F_t(x)-t(p(x)-f(x))|\le \nurm 0{f-p}{}< \e.
\ee

Let us turn to the second statement of the theorem.
Assume for simplicity that $D$ is the standard unit disk in $\R^n$. We will show that for any given compact hypersurface $Z\subset D$ defined by a regular $C^1$ equation $f=0$ such that $f=1$ near $\de D$, there exists a sequence of smooth functions $f_m$ such that
\be 
b(Z(f_m))\ge \frac{b(Z)}{\kappa^{(1)}(f,D)^n}h(D)\kappa^{(1)}(f_m,D)^n,
\ee
where $h(D)$ is the infimum among the numbers $N\e^n$, such that there exists a collection of $N$ disjoint $n$-dimensional 
disks of radius $\e$ contained in $D$.

Extend $f$ to the whole space $\R^n$, by setting $f(x)=1$ for all $x\notin D$. Let $m\in\mathbb{N}$ and define $f_{m,z}\in \coo 1{\R^n}\R$ to be the function
\be 
f_{m,z}(x)=f\left(m(x-z)\right),
\ee
so that the submanifold $Z(f_{m,z})$ is contained in the interior of the disk of radius $m^{-1}$ centered in the point $z$ and it is diffeomorphic to $Z$.
Moreover we can observe that, with $m\ge 1$, we have the inequalities
\be\begin{aligned} 
\delta(f_{m,z},D)=\delta(f_{m,0,D}) &=\inf_{x\in D}\left(|f(x)|^2+m^{2}\|\nabla f (x)\|^2\right)^{1/2}\ge \delta(f,D);\\
\nurm1{f_{k,z}}{}=\nurm1{f_{k,0}}{} &=\sup_{x\in D}\left(|f(x)|^2+m^{2}\|\nabla f (x)\|^2\right)^{1/2}\le m\nurm0f{};
\end{aligned}\ee
therefore $\kappa(f_{m,z},D)\le m\kappa(f,D)$.

\begin{figure}
    \centering
    \includegraphics[scale=0.13]{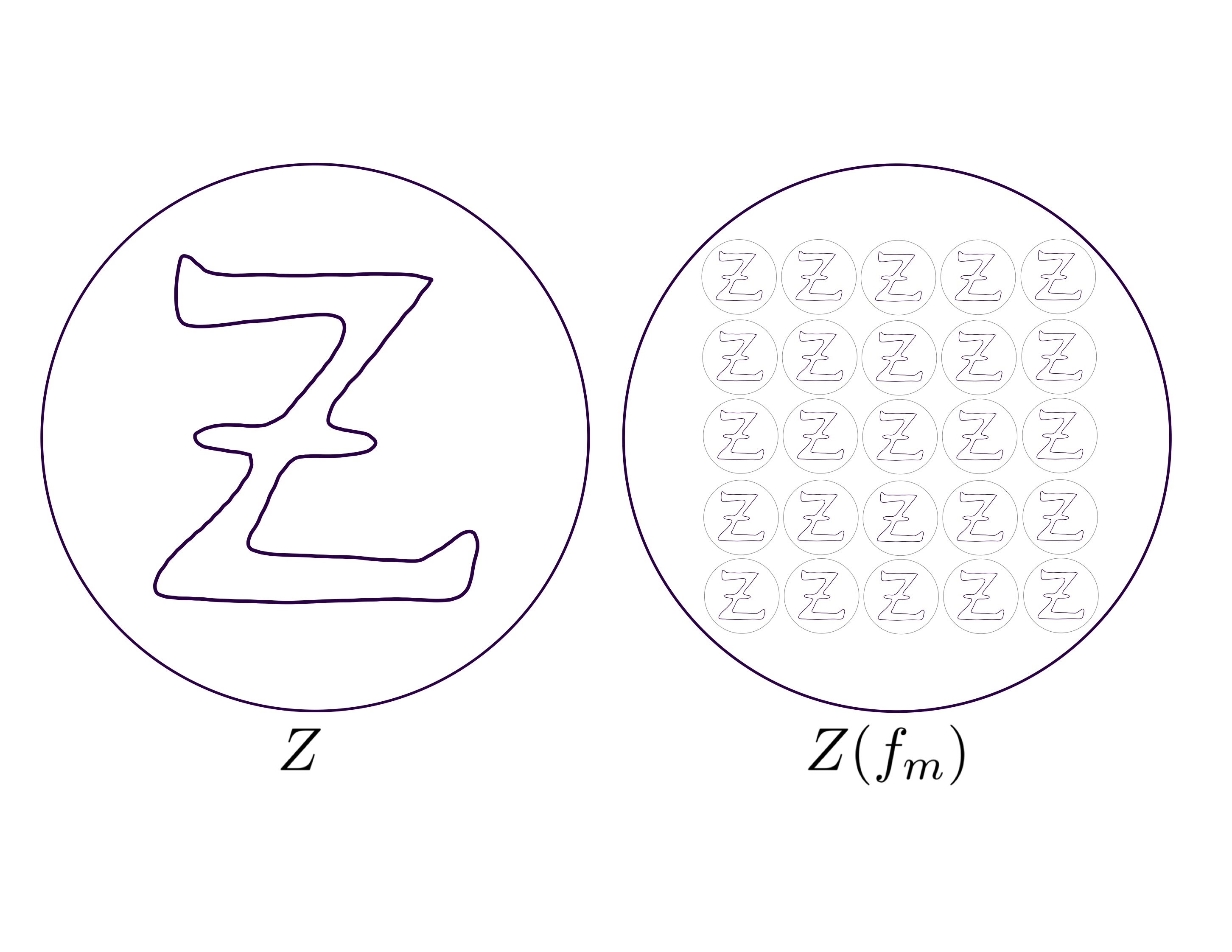}
    \caption{$Z(f_m)$, on the right, is the disjoint union of many copies of $Z$.}
    \label{fig:zzz}
\end{figure}

For any $m\in\mathbb{N}$, choose a finite family $I_m$ of 
points $z_{m,i}\in D$, such that the disks $D_{m,i}$, centered in $z_{m,i}$ and with radius $m^{-1}$, are disjoint. Since the (Hausdorff) dimension of $D^n$ is $n$, we can assume that the number of points in such a family is $\#(I_m)\ge h(D)m^n$.
Define the function $f_m\in \coo 1D\R$ as
\be 
f_m=1-\#(I_m)+\sum_{i\in I_m}{f_{m, z_i}},
\ee
so that $f_m$ coincides with $f_{m,i}$ on the disk $D_{m,i}$ and it is constantly equal to $1$ outside the union of all disks. 

It follows that $\kappa^{(1)}(f_m,D)\le m\kappa^{(1)}(f,D)$ and that the sequence $\kappa^{(1)}(f_m,D)$ is divergent as $m\to +\infty$.
Moreover, the zero set of $f_m$ is homeomorphic to a disjoint union of $\#(I_m)$ copies of $Z$, hence
\be 
b(Z(f_m))\ge \#(I_m)b(Z)\ge h(D)b(Z)m^n.
\ee
Putting these two observations together we conclude that 
\be 
b(Z(f_m))\ge h(D)b(Z)\left(\frac{\kappa^{(1)}(f_m,D)}{\kappa^{(1)}(f,D)}\right)^n.
\ee

\subsection{Proof of Corollary \ref{coro:NSW}}
 \begin{remark}\label{rem:NSW}Combining Theorem \ref{thm:semiwitdash} with Theorem \ref{thm:condeq}, one can obtain the estimate from Corollary \ref{coro:NSW} for the Betti numbers of $Z$. We observe that it is possible to obtain a similar estimate also from the work \cite{NSW}, let us sketch how.
 
 To start with, \cite[Proposition 3.1]{NSW} claims that there exists $\epsilon>0$ with 
 \be\label{eq:ee} \epsilon<\sqrt{\frac{3}{5}}\rho(Z)\ee
 such that, if $x_1, \ldots, x_m\in \R^n$ is a collection of points which are $\frac{\epsilon}{2}$--dense in $Z$, then the inclusion
 \be Z\hookrightarrow \bigcup_{i=1}^m B(x_1, \epsilon)=:U\ee
 is a homotopy equivalence. Since each ball is contractible, the set $U$ is also homotopy equivalent to the nerve complex $C$ of the cover $\{B(p_i, \epsilon), i=1, \ldots, m\},$ and in particular $b(Z)=b(C)$). 
 
 The number of vertices of this complex, i.e. $m$, can be chosen to be of the order:
 \be m\leq a_1(n)\frac{\mathrm{vol}(Z)}{\epsilon^{n-1}}.\ee
 Moreover we can chose the $\frac{\epsilon}{2}$--net to also satisfy the following: every ball $B(p_i, \epsilon)$ intersects at most $a_2(n)>0$ other balls from the family. In particular, each vertex of $C$ belongs to at most $a_2(n)$ cells, and:
 \be\label{eq:bNSW} b(Z)=b(C)\leq a_2(n) m\leq a_3(n)\frac{\mathrm{vol}(Z)}{\epsilon^{n-1}}.\ee
  Now, by Weyl's tube formula:
 \be \mathrm{vol}(\mathcal{U}(Z, \rho(Z)))=\mathrm{vol}(Z)\rho(Z)\leq \mathrm{vol}(D_{R+\rho}),\ee
 from which we get that 
 \be \label{eq:w}\mathrm{vol}(Z)\leq \frac{\mathrm{vol}(D_{R+\rho})}{\rho}.\ee
 Choose now $\epsilon=c_4 \rho(Z)$ such that \eqref{eq:ee} is satisfied. Then, combining \eqref{eq:bNSW} with \eqref{eq:ee} we get:
 \be b(Z)\leq a_3(n)\frac{\mathrm{vol}(Z)}{\epsilon^{n-1}}=a_5(n)\frac{\mathrm{vol}(Z)}{\rho(Z)^{n-1}}\leq a_6(n) \frac{\mathrm{vol}(D_{R+\rho})}{\rho(Z)^{n}}.\ee
 \end{remark}

\begin{proof}[Proof of Corollary \ref{coro:NSW}]
Observe that $\rho(Z)$ cannot be greater than the radius of $D$ unless $Z$ is empty, in which case there is nothing to prove. Define $D'$ to be the disk with a double radius than that of $D$, so that $Z$ and $D'$ satisfy the hypotheses of Theorem \ref{thm:condeq}, thus there exists a function $f$ such that
\be
\begin{aligned}
b(Z)&=b\left(Z(f)\right)\le \left(c_4(D')\cdot{\kappa_1(f,D')}\right)^n\le {c_4(D')}\cdot 2^n\left(1+\frac 1{\rho(Z)}\right)^n,
\end{aligned} 
\ee
where the first inequality is implied by Theorem \ref{thm:semiwitdash}. Taking $c_5(D)=2^nc_4(D')$ we conclude the proof.
\end{proof}
\appendix
\section{Global polynomial approximation of hypersurfaces} \label{sec:global}
The aim of this Section is to prove Theorem \ref{thm:quant}, which gives a quantitative bound for the degree of a polynomial approximating a smooth hypersurface on the whole $\R^n.$ We need firts the following Lemma.

\begin{lemma}\label{lemma:constants}Let $D\subset \R^n$ be a euclidean disk of radius $R$. For every $\ell, n,d\geq 0$ there exists a constant $c_\ell(n, d)>0$ such that if $p\in \R[x_1, \ldots, x_n]$ of degree $\deg(p)=d$, then:
\be |p(x)|\leq c_\ell(n, d)\, \nurm{\ell}{p}{}\, \left(\frac{\|x\|}{R}\right)^d\, (1+R^\ell)\quad \forall \|x\|\geq R>0.
\ee
\end{lemma}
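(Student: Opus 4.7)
The plan is to reduce to the unit disk by scaling. Define $q(y) = p(Ry)$, so that $q$ is a polynomial of degree at most $d$ in $n$ variables on the unit ball $B_1 \subset \R^n$, with coefficients $b_\alpha = a_\alpha R^{|\alpha|}$ if we write $p(x) = \sum_{|\alpha|\leq d} a_\alpha x^\alpha$.

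The key observation is that on the finite-dimensional vector space $P_{d,n}$ of polynomials of degree at most $d$ in $n$ variables, the norm $q \mapsto \sum_\alpha |b_\alpha|$ and the norm $q \mapsto \sup_{B_1}|q|$ are equivalent, with some constant $A(n,d) > 0$ depending only on $n$ and $d$. Thus
\be
\sum_{|\alpha|\leq d} |a_\alpha| R^{|\alpha|} = \sum_{|\alpha|\leq d} |b_\alpha| \leq A(n,d)\sup_{y\in B_1}|q(y)| = A(n,d)\sup_{x\in D}|p(x)| \leq A(n,d)\,\nurm{\ell}{p}{},
\ee
using that $\sup_D|p| \leq \|p\|_{C^0(D,\R)} \leq \|p\|_{C^\ell(D,\R)}$.

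Finally, for $\|x\| \geq R$ I would estimate $|x^\alpha| \leq \|x\|^{|\alpha|}$ and write
\be
\|x\|^{|\alpha|} = R^{|\alpha|}\left(\frac{\|x\|}{R}\right)^{|\alpha|} \leq R^{|\alpha|}\left(\frac{\|x\|}{R}\right)^d,
\ee
since $\|x\|/R \geq 1$ and $|\alpha| \leq d$. Summing the resulting inequality over $|\alpha| \leq d$ gives
\be
|p(x)| \leq \sum_{|\alpha|\leq d}|a_\alpha| R^{|\alpha|}\left(\frac{\|x\|}{R}\right)^d \leq A(n,d)\,\nurm{\ell}{p}{}\,\left(\frac{\|x\|}{R}\right)^d,
\ee
which is even stronger than the stated bound (the extra factor $(1+R^\ell)$ is harmless: one can absorb it by setting $c_\ell(n,d) = A(n,d)$).

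The only nontrivial ingredient is the equivalence of the coefficient norm and the sup norm on $B_1$, which is standard because both are norms on the same finite-dimensional space $P_{d,n}$; no analytic work beyond compactness of the unit sphere in $P_{d,n}$ is required. The main conceptual step, and the one that explains the exponent $d$ in the conclusion, is the observation that the bound on $\sum |a_\alpha|R^{|\alpha|}$ (rather than on $\sum |a_\alpha|$) is exactly what couples with the factor $\|x\|^{|\alpha|}$ to produce a clean $(\|x\|/R)^d$ growth outside the disk.
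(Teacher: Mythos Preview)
Your proof is correct and follows the same route as the paper's: rescale to the unit disk via $q(y)=p(Ry)$, use equivalence of norms on the finite-dimensional space $V_{n,d}$ of polynomials of degree $\le d$, and then bound each monomial by $(\|x\|/R)^d$ for $\|x\|\ge R$. The one difference is that the paper compares the coefficient norm of $p_R$ to $\|p_R\|_{C^\ell(D_1)}$ and then relates the latter to $\|p\|_{C^\ell(D)}$ via the chain rule (which is exactly where the factor $1+R^\ell$ enters), whereas you compare directly to $\sup_{D_1}|q|=\sup_D|p|\le\|p\|_{C^\ell(D)}$ and thereby bypass this factor, yielding a slightly sharper inequality.
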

\begin{proof}Let $V_{n,d}\subset \R[x_1, \ldots, x_n]$ be the space of polynomials of degree at most $d$. Since $V_{n,d}$ is a finite-dimensional vector space the norm $\|\cdot\|_{C^\ell(D_1,\R)}|_{V_{n,d}}$ (here $D_1$ is the unit disk) and the norm $\|\cdot\|_{\mathrm{coeff}}$, given by the ``maximum of the modulus of the coefficients'', are equivalent and there exists a constant $a_1(n,d)>0$ such that:
\be \|f\|_{\mathrm{coeff}}\leq a_{1}(n, d) \|f\|_{C^\ell(D_1,\R)}.\ee
Let now $p_R(y)=p(Ry)$. Then for every $\|y\|\geq 1$:
\begin{align}|p_R(y)|&\leq \dim(V_{n,d})\,\|p_R\|_{\mathrm{coeff}}\,\|y\|^d\\
&\leq \dim(V_{n,d})\,a_1(n,d)\,\|p_R\|_{C^\ell(D_1,\R)}\,\|y\|^d.
\end{align}
On the other hand we have:
\begin{align}\|p_R\|_{C^\ell(D_1,\R)}&=\max_{\|y\|\leq 1}\left(\sum_{|\alpha|\leq \ell}|\partial^{\alpha}p_R(y)|^2\right)^{1/2}\\
&=\max_{\|y\|\leq 1}\left(\sum_{|\alpha|\leq \ell}\left|\partial^{\alpha}p(Ry)R^{|\alpha|}\right|^2\right)^{1/2}\\
&\leq \max_{\|Ry\|\leq R}\left(\sum_{|\alpha|\leq \ell}\left|\partial^{\alpha}p(Ry)\right|^2\right)^{1/2}(R^\ell+1)
\\&=\|p\|_{C^\ell(D_1,\R)}(R^\ell+1).
\end{align}
This gives:
\be |p(x)|=|p_R(x/R)|\leq \dim(V_{n, d})\, a_1(n, d) \, (R^\ell+1) \|p\|_{C^\ell(D_R,\R)} \, \left(\frac{\|x\|}{R}\right)^d.
\ee
Defining the constant $c_\ell(n,d):=\dim(V_{n, d})\, a_1(n, d) $ gives the claim. \end{proof}

\begin{theorem}\label{thm:quant} Let $D$ be a disk of radius $R>0$ centered at the point $z_0$ and consider $f\in C^2(D, \R)$ such that the equation $f=0$ is regular in $D$ and $Z(f)\subset \mathrm{int}(D).$ Let also $\tau=\tau(f, D)>0$ be such that:
\be e^{-3\tau}R=\max_{z\in Z(f)}\|z-z_0\|\ee
and set $r=e^{-2\tau}R<R.$ Denote by $D_r$ the disk with the same center of $D$ and with radius $r$. Let $c_1(n, d)>0$ be the constant from Lemma \ref{lemma:constants} and define:
\be \tilde{\kappa}^{(\ell)}=\max\{\kappa^{(\ell)}(f, D), \kappa^{(\ell)}(f, D_r)\}.\ee
There exists a polynomial $p\in \R[x_1, \ldots, x_n]$ with 
\be \deg(p)\leq \max\left\{r+1,\tilde{k}^{(2)}\cdot 2a_2(D),
\frac{\log\tilde{k}^{(1)}+\log\frac1\tau+\log\left(
8+\frac{8}{Re^{-\tau}}+4Re^{-\tau}+4\tau\right)+\log\left(c_1(n,d)\right)}{\tau}
\right\}
\ee
such that:
\be (\R^n, Z(p))\sim (\R^n, Z(f)).\ee 
\end{theorem}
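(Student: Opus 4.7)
Plan. The idea is to combine Theorem~\ref{thm:mainapp}, which handles the local isotopy inside the disk $D$, with a quantitative tail estimate based on Lemma~\ref{lemma:constants} that rules out extra zeros outside $D$.

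First, applying Theorem~\ref{thm:mainapp} to $f$ on $D$ produces a polynomial $p$ of degree at most $c_1(0,D)\cdot\max\{1,\kappa^{(2)}(f,D)\}\le 2a_2(D)\,\tilde{\kappa}^{(2)}$ realizing the local isotopy $(D,Z(f))\sim(D,Z(p))$; this accounts for the second entry in the maximum. A key auxiliary estimate is that $|f|$ is bounded below on the annulus $\overline{D_r}\setminus D_{e^{-3\tau}R}$: since $Z(f)\subset D_{e^{-3\tau}R}$ and $|\nabla f(z)|\geq \delta(f,D_r)=\|f\|_{C^1(D_r)}/\kappa^{(1)}(f,D_r)$, a mean-value argument along the shortest segment to $Z(f)$ gives $|f(z)|\geq \mu$ on this annulus, where $\mu$ is explicitly controlled by $\tilde{\kappa}^{(1)}$ and the thickness $Re^{-3\tau}(e^\tau-1)$. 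Choosing the $C^0$ approximation error of $p$ on $D_r$ below $\mu/2$ forces $Z(p)\cap D_r\subset D_{e^{-3\tau}R}$ and pins down the sign of $p$ on the inner annulus.

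The harder step is to ensure $p$ has no zeros outside $D$, and this is where Lemma~\ref{lemma:constants} enters, applied on $D_r$:
\be
|p(x)|\leq c_1(n,d)\,\|p\|_{C^1(D_r)}\,(\|x-z_0\|/r)^d\,(1+r),\qquad \|x-z_0\|\geq r.
\ee
Taken by itself this is the wrong direction, and the trick is to use it as a bound on oscillation: the polynomial $p$ has $|p|\geq \mu/2$ of a fixed sign on the annulus $\overline{D_r}\setminus D_{e^{-3\tau}R}$, and by a Bernstein--Walsh type comparison (essentially Lemma~\ref{lemma:constants} applied to $p-p(x_0)$ around a boundary point $x_0\in\partial D_r$) its oscillation on $D\setminus D_r$ is bounded by a factor $(R/r)^d=e^{2\tau d}$ times the oscillation on $D_r$, which is itself of order the approximation error $\varepsilon_d$. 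Requiring
\be
\varepsilon_d\cdot e^{2\tau d}<\mu/4
\ee
keeps the sign of $p$ constant throughout $D\setminus D_{e^{-3\tau}R}$ and, together with Lemma~\ref{lemma:constants}, in the exterior as well.

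To balance the two regimes one uses Theorem~\ref{thm:bblapprox} at level $\ell=1$, so that $\varepsilon_d\sim \|f\|_{C^1(D_r)}/d$. Solving $\varepsilon_d\cdot e^{2\tau d}<\mu/4$ for $d$ and unpacking the dependence on $\tilde{\kappa}^{(1)}$, $\tau$, $R$, and $c_1(n,d)$ yields a bound matching the third entry in the maximum of the theorem. Once $p$ has no zeros outside $D$, the isotopy from the first step automatically extends to the global isotopy $(\R^n,Z(f))\sim(\R^n,Z(p))$ via the identity outside the disk. The main obstacle is precisely the oscillation control in the second step: Lemma~\ref{lemma:constants} only yields upper bounds, so turning it into a lower bound on $|p|$ in the exterior requires pairing it with the lower bound on $|f|$ on the annulus, and the logarithmic factor $\log\tilde{\kappa}^{(1)}/\tau$ in the degree is the inevitable price of the exponential Bernstein--Walsh trade-off between accuracy on the small disk $D_r$ and growth on the larger disk $D$.
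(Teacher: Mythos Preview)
Your proposal has a genuine gap at the ``harder step.'' The polynomial $p$ you produce is just the Weierstrass approximant of $f$ on the disk, and there is no mechanism forcing such a $p$ to be nonvanishing on all of $\R^n\setminus D$. A polynomial that is close to $f$ on $D$ can (and generically will) have zeros far outside $D$: think of an odd-degree approximant, or any even-degree approximant whose leading homogeneous form is indefinite. Lemma~\ref{lemma:constants} and Bernstein--Walsh type inequalities give only \emph{upper} bounds on $|p(x)|$ for $\|x\|\ge r$; they cannot be turned into the lower bound $|p(x)|>0$ on the unbounded exterior, no matter how you pair them with information on an inner annulus. Your oscillation constraint $\varepsilon_d\, e^{2\tau d}<\mu/4$ is also problematic in its own right: with $\varepsilon_d\sim C/d$ the left side tends to $+\infty$ as $d\to\infty$, so the inequality imposes an \emph{upper} bound on $d$, in tension with the lower bound needed for the approximation---and there is no reason the two ranges overlap. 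Finally, the claimed bound $|\nabla f(z)|\ge \delta(f,D_r)$ is false as stated: $\delta$ controls $\bigl(|f|^2+|\nabla f|^2\bigr)^{1/2}$, not $|\nabla f|$ alone, so the mean-value argument does not yield the lower bound on $|f|$ you want.

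The paper avoids all of this by \emph{not} trying to make the bare approximant work globally. It first takes a $C^1$-close polynomial $p_0$ of degree $d$ (this is your first step), and then \emph{adds} a correction term
\[
p\;=\;p_0\;+\;a\Bigl(\tfrac{\|x\|^2}{s^2}\Bigr)^{\ell},\qquad r<s<R,
\]
with $2\ell>d$. Lemma~\ref{lemma:constants} is then used in the natural direction, to bound $|p_0(x)|$ from \emph{above} for $\|x\|\ge s$, so that the added radial term dominates and forces $p>0$ on $\R^n\setminus D$. Inside $D_r$ the correction is tiny because $(r/s)^{2\ell}$ is small, so the isotopy on $D_r$ survives. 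The third entry in the degree bound is exactly the size of $2\ell$ needed to balance these two requirements; it is the degree of the correction, not of the approximant. This additive correction is the missing idea in your sketch.
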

\begin{remark}
The estimate above is more interesting when $\tau\to 0$ and $\tilde{k}^{(1)}\to +\infty$, in which case it implies the simpler inequality
\be
\mathrm{deg}(p)\le C\frac{ \tilde{k}^{(2)}}{\tau^2}
\ee
\end{remark}
\begin{proof}[Proof of Theorem \ref{thm:quant}]
Assume that $D$ is centered at $0$. The first step of the proof is to argue as in the proof of Theorem \ref{thm:mainapp} and find a polynomial $p_0\in \R[x_1,\dots,x_n]$ such that 
\be 
\nurm{1}{p_0-f}{}<\frac12 \min\left\{\delta(f,D),\delta(f,D_r)\right\}=:\delta, 
\ee
so that $\left(D,Z(p_0)\cap D\right)\cong (D,Z(f))$ and, at the same time, $Z(p_0)\cap D\subset \text{int}(D_r)$. Thus we can assume that $p_0(D\smallsetminus D_r)>0$, since $p_0$ has no zeroes in that region.
Moreover, thanks to Theorem \ref{thm:bblapprox} we can estimate the degree $d$ of $p_0$:
\be \label{eq:12degree}
\begin{aligned}
d&\le \max\left\{r+1, \frac{\nurm2{f}{}}{\min\left\{\delta(f,D),\delta(f,D_r)\right\}}\cdot 2a_{r+2(D)}\right\}\\
&=
\max\left\{r+1, \max\left\{\frac{\nurm2{f}{}}{\delta(f,D)},\frac{\nurm2{f}{}}{\delta(f,D_r)}\right\}\cdot 2a_{r+2(D)}\right\}\\
&=\max\left\{r+1, \tilde{k}\cdot 2a_{r+2}(D)\right\}.
\end{aligned}
\ee

Now we have to modify $p_0$ in order to eliminate those components of its zero set $Z(p_0)$ that are not contained in $D$. To this end we define a new polynomial $p\in\R[x_1,\dots,x_n]$ such that
\be
p=p_0+a\left(\frac{|x|^2}{s^2}\right)^\ell,
\ee
for some $r<s<R$, $a>0$ and $\ell\in\mathbb{N}$. We need to take $\ell$ and $a$ so big that
\begin{enumerate}
    \item $p(x)>0$ for every $x\notin \text{int}(D)$. This ensures that $Z(p)=Z(p)\cap D$.
    \item $\nurm{1}{p-p_0}{}<\frac12 \delta(f,D)$, so that $\nurm{1}{p-f}{}< \delta(f,D)$ and thus $(D,Z(p)\cap D)\cong (D,Z(f))$ by Lemma \ref{lemma:qt}.
\end{enumerate}
Combined with the fact that $Z(f)\subset \text{int}(D)$, the two conditions above imply that $(\R^n,Z(p))\cong (\R^n,Z(f))$. Thus it remains to estimate the degree of such a polynomial $p$.

Let $x\notin D$. By Lemma \ref{lemma:constants} we have\be
\begin{aligned}
p(x)&\ge a\left(\frac{|x|}{s}\right)^{2\ell}- |p_0(x)|\\
&\ge
a\left(\frac{|x|}{s}\right)^{2\ell}-c_1(n,d)(1+s)\|p_0\|_{C^1(D_s,\R)}\left(\frac{|x|}{s}\right)^d,
\end{aligned}
\ee
where $D_s$ is the disk of radius $s$. Therefore condition $(1)$ is certainly satisfied for all $\ell> \frac12 d$ and
\be
\begin{aligned}
 \label{eq:cd1}a&\ge c_1(n,d)(1+s)\cdot2\|f\|_{C^1(D,\R)}\\
 &\ge c_1(n,d)(1+s)\cdot\left(\|f\|_{C^1(D_s,\R)}+\delta\right)\\
 &\ge c(n,d)(1+s^2)\cdot\|p_0\|_{C^1(D_s,\R)}.
\end{aligned}
\ee
\begin{lemma}
For any $\rho>1$ and $\ell\in \mathbb{N}$,
\be\label{eq:lemmalog}
\ell \log \rho\le \rho^\ell-1.
\ee
\end{lemma}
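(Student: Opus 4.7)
The plan is to deduce this one-line inequality from the standard convexity bound $e^t \geq 1+t$, valid for every real $t$. Setting $t = \ell \log \rho$, which is nonnegative since $\rho>1$ and $\ell \in \mathbb{N}$, one immediately obtains
\be
\rho^\ell \;=\; e^{\ell \log \rho} \;\geq\; 1 + \ell \log \rho,
\ee
which is exactly \eqref{eq:lemmalog} after rearrangement.

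If one prefers to avoid invoking $e^t\geq 1+t$ as a black box, the alternative I would take is a direct calculus argument: define $\phi(\rho) := \rho^\ell - 1 - \ell \log \rho$ on $[1,\infty)$. Then $\phi(1)=0$ and $\phi'(\rho) = \ell \rho^{\ell-1} - \ell/\rho = \ell\bigl(\rho^{\ell-1} - \rho^{-1}\bigr)$, which is nonnegative for $\rho \geq 1$ whenever $\ell \geq 1$ (and $\phi \equiv 0$ if $\ell=0$). Hence $\phi$ is nondecreasing on $[1,\infty)$, so $\phi(\rho) \geq \phi(1) = 0$.

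There is really no obstacle here; the only thing to be mindful of is the edge case $\ell=0$, where both sides of \eqref{eq:lemmalog} equal $0$ and the inequality holds with equality. Either approach covers this case trivially.
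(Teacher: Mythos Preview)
Your proof is correct. Both of your arguments are valid and cover the edge case $\ell=0$ as you note.

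The paper's proof is also a one-line calculus argument, but it differentiates in the other variable: it fixes $\rho>1$, sets $\varphi(\ell)=\ell\log\rho-\rho^\ell+1$ (viewing $\ell$ as a real parameter), observes $\varphi(0)=0$, and computes $\varphi'(\ell)=\log\rho-\rho^\ell\log\rho=-(\rho^\ell-1)\log\rho\le 0$, so $\varphi$ is nonincreasing. Your first approach is arguably the cleanest of the three, since it recognizes the inequality as nothing more than $e^t\ge 1+t$ at $t=\ell\log\rho$; your alternative and the paper's proof are the same idea carried out by differentiating in $\rho$ and in $\ell$, respectively. All three are equivalent in spirit and difficulty.
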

\begin{proof}
The function $\ell\mapsto \f(\ell)=\ell \log\rho-\rho^\ell+1$ takes the value $\f(0)=0$ at $\ell=0$ and has negative derivative for all $\ell>0$: 
\be
\f'(\ell)=\log\rho-\log\rho\cdot\rho^\ell=-(\rho^\ell-1)\log\rho\le 0.
\ee
\end{proof}
Applying the previous Lemma with $\rho=\frac{s}{r}>1$, we obtain the inequality
\be 
\ell\le \left(\left(\frac{s}{r}\right)^\ell-1\right)\frac{1}{\log\left(\frac{s}{r}\right)}.
\ee
Therefore
\be
\begin{aligned}
\|p-p_0\|_{C^2(D_r,\R)}&\le a\left(\frac{r}{s}\right)^{2\ell}+a2\ell\frac{r^{2\ell-1}}{s^{2\ell}}
\\
&=a\left(\frac{r}{s}\right)^{2\ell}\left(1+\frac{2\ell}{r}\right)
\\
&\le a\left(\frac{r}{s}\right)^{2\ell}\left(1+2\frac{\left(\frac{s}{r}\right)^{2\ell}-1}{r\log\left(\frac{s}{r}\right)}\right)
\\
&\le a\left(\left(\frac{r}{s}\right)^{2\ell}+\frac{2}{r\log\left(\frac{s}{r}\right)}\right)
\\
&\le a\left(1+\frac{2}{r\log\left(\frac{s}{r}\right)}\right)\left(\frac{r}{s}\right)^{2\ell}.
\end{aligned}
\ee
To ensure that condition $(2)$ is satisfied it is thus sufficient to assume that
\be\label{eq:cd2}
 a\left(1+\frac{2}{r\log\left(\frac{s}{r}\right)}\right)\left(\frac{r}{s}\right)^{2\ell}\le \delta,
\ee
which, combinined with \eqref{eq:cd1}, becomes
\be
\begin{aligned}
 \left(\frac{r}{s}\right)^{2\ell}&\le \left(1+\frac{2}{r\log\left(\frac{s}{r}\right)}\right)^{-1}\frac{\delta}{2c_1(n,d)(1+s)\|f\|_{C^1(D,\R)}}, \\ \text{i.e.} \quad
 \left(\frac{s}{r}\right)^{2\ell} &\ge \left(\frac{8}{r\log\left(\frac{s}{r}\right)}+4\right)c_1(n,d)(1+s)\tilde{k}^{(1)},\\
 \text{i.e.} \quad 2\ell &\ge \frac{\log\tilde{k}^{(1)}+\log\left(c_1(n,d)(1+s)\left(\frac{8}{r\log\left(\frac{s}{r}\right)}+4\right)\right) }{\log\left(\frac{s}{r}\right)}.
\end{aligned}
\ee
By taking $s=e^{-\tau}R\in (r,R)$, we get that $\frac{s}{r}=e^\tau$ and obtain the final formula formula
\be
\begin{aligned}
2\ell&\ge \frac{\log\tilde{k}^{(1)}+\log\left(c_1(n,d)\right)+\log\left((1+e^{-\tau}R)\left(\frac{8}{Re^{-\tau}\tau}+4\right)\right)}{\tau} 
\\
&=\frac{\log\tilde{k}^{(1)}+\log\frac1\tau+\log\left(
8+\frac{8}{Re^{-\tau}}+4Re^{-\tau}+4\tau\right)+\log\left(c_1(n,d)\right)}{\tau}.
\end{aligned}
\ee
We conclude that if we take $2 \ell\ge d$ and such that the previous inequality holds, then $p$ satisfies conditions $(1)$ and $(2)$. Combining this fact with the estimate \eqref{eq:12degree} on $d$ we conclude the proof.
\end{proof}


\bibliographystyle{alpha}
\bibliography{What_is_the_degree_of_a_smooth_hypersurface}
\end{document}